\numberwithin{equation}{section}
\newtheorem{theorem}{Theorem}[section]
\newtheorem{remark}{Remark}[section]
\newtheorem{lemma}{Lemma}[section]
\newtheorem{example}{Example}[section]
\def\cT {\mathcal T}
\begin{document}
\title{A Convergent Adaptive Finite Element Method \\
for Electrical Impedance Tomography}
\author{
\and Bangti Jin\footnote{Department of Computer Science, University College London, Gower Street, London WC1E 6BT, UK (b.jin@ucl.ac.uk, bangti.jin@gmail.com)}
\and Yifeng Xu\footnote{Department of Mathematics, Scientific Computing Key Laboratory of Shanghai Universities and E-Institute for Computational Science of Shanghai Universities, Shanghai Normal University, Shanghai 200234, China. (yfxuma@aliyun.com)}
\and Jun Zou\footnote{Department of Mathematics, The Chinese University of Hong Kong, Shatin, New Territories, Hong Kong. (zou@math.cuhk.edu.hk)}
}

\date{}

\maketitle
\begin{abstract}
{In this work we develop and analyze an adaptive finite element method for efficiently solving
electrical impedance tomography -- a severely ill-posed nonlinear inverse problem
for recovering the conductivity from boundary voltage measurements. The reconstruction technique is
based on Tikhonov regularization with a Sobolev smoothness penalty and discretizing the forward model using
continuous piecewise linear finite elements. We derive an adaptive finite element algorithm with an a
posteriori error estimator involving the concerned state and adjoint variables and the recovered conductivity.
The convergence of the algorithm is established, in the sense that the sequence of discrete
solutions contains a convergent subsequence to a solution of the optimality system for the continuous
formulation. Numerical results are presented to verify the convergence and efficiency of the
algorithm.} \smallskip

\noindent\textbf{Keywords:} {electrical impedance tomography, a posteriori error estimator,
adaptive finite element method, convergence analysis.}
\end{abstract}

\section{Introduction}\label{sect:intro}
Electrical impedance tomography (EIT) is a diffusive imaging modality for probing internal structures of the
concerned object, by recovering its electrical conductivity/permittivity distribution from voltage measurements on the
boundary. One typical experimental setup is as follows. One first attaches a set of metallic electrodes to the surface of
the object, then injects an input current into the object through these electrodes, which induces an electromagnetic
field inside the object. Last, one measures the induced electric voltages on the electrodes. The procedure is usually repeated
several times with different input currents in order to yield sufficient information about the sought-for conductivity distribution.
In many applications, the physical process can be most accurately described by the complete electrode model (CEM)
\cite{ChengIsaacsonNewellGisser:1989,SomersaloCheneyIsaacson:1992}. The imaging modality has attracted considerable interest in  medical imaging,
geophysical prospecting, nondestructive evaluation and pneumatic oil pipeline conveying etc.

A number of reconstruction algorithms have been proposed for the EIT inverse problem;
see, e.g., \cite{LechleiterHyvonen:2008,AdGaLi:2011,JinKhanMaass:2012,JinMaass:2010,LechleiterRieder:2006,WinklerRieder:2014,
KnudsenLassasMueller:2009,HarrachUllrich:2013,GehreJin:2014,ChowItoZou:2014,DunlopStuart:2015} for an incomplete list. One prominent idea underlying
 existing imaging algorithms is regularization, especially Tikhonov regularization with a smoothness or sparsity type
penalty, and they have demonstrated encouraging results with experimental data. In practice,
they are customarily implemented using the continuous piecewise linear finite element method (FEM),
due to its flexibility in handling variable coefficients and general geometry. Despite its popularity, it was only
rigorously justified recently in \cite{GehreJinLu:2014} for the CEM on either polygonal or smooth convex domains.

The accuracy of the CEM relies crucially on the use of nonstandard boundary conditions for capturing important
characteristics of the physical experiment, notably contact impedance effect. As a consequence, around the boundary
of the electrodes, the boundary condition changes from the Neumann to Robin type, which induces weak
singularity of the forward solution around the interface \cite{Grisvard:1985}. The low-regularity
of the sought-for conductivity field, as enforced by Sobolev smoothness penalty, will possibly also induce weak solution singularities.
With a quasi-uniform triangulation of the domain, the solution singularities are not effectively resolved and the errors around
electrode edges and discontinuity interfaces are dominant, which can potentially compromise the reconstruction accuracy greatly, if done
inadvertently. This naturally motivates the use of an adaptive strategy to achieve the desired accuracy with
reduced computational complexity. In this work, we shall develop a novel adaptive finite
element method (AFEM) for the EIT inverse problem and analyze its convergence.

Generally the AFEM generates a sequence of nested triangulations and discrete solutions by the following successive loop:
\begin{equation}\label{afem_loop}
  \mbox{SOLVE}\rightarrow\mbox{ESTIMATE}\rightarrow\mbox{MARK}\rightarrow\mbox{REFINE}.
\end{equation}
The key ingredient in the procedure is the module \texttt{ESTIMATE}, which consists of computing a posteriori error estimators, i.e., computable
quantities from the discrete solution, the local mesh size and other given data. This has been thoroughly studied for
forward problems; see, e.g., \cite{AinsworthOden:2000,Ver:1996}. Over the past few decades, there are also many important works on
the a posteriori error analysis of PDE-constrained optimal control problems; see \cite{HinterHoppe:2010,HinterHoppeIliashKieweg:2008,LiLiuMaTang:2002,LiuYan:2001,BeckerMao:2011} for a very incomplete list. 
In particular, Becker and Mao \cite{BeckerMao:2011} showed the
quasi-optimality of the AFEM for an optimal control problem with control constraints. However, the behavior of inverse problems such as EIT is quite different
from that of optimal control problems due to the ill-posed nature, the {presence} of the data noise and high-degree nonlinearity.

The adaptive idea, including the AFEM, has started to attract some attention in
the context of inverse problems in recent years. In \cite{BeckerVexler:2004,BeilinaClason:2006,BeilinaJohnson:2005},
the AFEM using a dual weighted residual framework was studied for parameter identification problems, and high order terms in relevant Lagrangian
functionals were ignored. Feng et al \cite{FengYanLiu:2008} proposed a residual-based estimator for state, costate (adjoint) and parameter
by assuming convexity of the cost functional and high regularity on the parameter. Li et al \cite{LiXieZou:2011} derived rigorous a posteriori
error estimators for reconstructing the distributed flux under a practical regularity assumption, in the sense that like for forward problems,
the errors of the state variable, the adjoint variable and the flux are bounded from above and below by multiples of the estimators. In
a series of interesting works \cite{BeilinaKlibanov:2010a,BeilinaKlibanov:2010b,BeilinaKlibanovKokurin:2010}, Beilina et al adopted the
AFEM for hyperbolic coefficient inverse problems.
Kaltenbacher et al \cite{GriesbaumKaltenbacherVexler:2008,KaltenbacherKirchnerVeljovic:2014} described and analyzed adaptive strategies for
choosing the regularization parameter in Tikhonov regularization and iterative regularization techniques, e.g., Gauss-Newton methods.
Unlike the AFEM for forward problems, for which the convergence and computational complexity have been systematically studied (see
the survey papers \cite{CFPP:2014,NSV:2009}), the theoretical analysis of the AFEM for inverse problems is still in its
infancy. Recently, Xu and Zou \cite{XuZou:2015a,XuZou:2013b} established the convergence of the AFEM for recovering the flux and the Robin
coefficient. We remark that the convergence rate and optimality of the AFEM in the context of nonlinear inverse
problems are completely open, due to inherent nonconvexity of the functional, and lack of precise regularity results of the
minimizers to the nonlinear optimization problem. Nonetheless, our convergence result in Theorem \ref{thm:conv_alg}
provides some theoretical justifications of the AFEM for the EIT inverse problem.

In this paper, we develop a novel AFEM for the EIT {based on Tikhonov regularization with a $H^1(\Omega)$ seminorm
penalty} and analyze its convergence. The AFEM is of the standard form \eqref{afem_loop}: it does not require the interior
node property in the module \texttt{REFINE}, and hence it is easy to implement. The derivation of
a posteriori error estimators is constructive: it lends itself to a route for convergence analysis. The analysis
relies on a limiting output least-squares problem defined on the closure of adaptively generated finite
element spaces, and it consists of the following two steps. First, the sequence of discrete minimizers is shown in Section
\ref{ssec:limitingproblem} to contain a subsequence converging to a solution of the limiting problem, and then
the limiting minimizer and related state and adjoint variables are proved in Section \ref{ssec:convergence}
to satisfy the necessary optimality system of the continuous Tikhonov functional.

This work is a continuation of our prior work \cite{GehreJinLu:2014} on the FEM analysis of EIT, but differs
from the latter considerably in several aspects. The major effort of \cite{GehreJinLu:2014} was to justify the convergence of
the \textit{quasiuniform} FEM approximation of the Tikhonov formulation of the EIT, and no a posteriori error estimator
and adaptive method were studied, which is the main goal of the present work. The convergence analysis in
\cite{GehreJinLu:2014} relies crucially on the $W^{1,q}(\Omega)$ ($q>2$) regularity of the forward solution and
the density of FE spaces $V_h$ in $H^1(\Omega)$. The density does not hold generally for adaptively generated
FE spaces. Hence, the analysis in \cite{GehreJinLu:2014} does not carry over to the AFEM directly. In this work, we shall
adopt a strategy developed in \cite{XuZou:2013b} for recovering the Robin coefficient from the Cauchy data to overcome these technical
difficulties. Nonetheless, there are major differences in the analysis due to higher degree of
nonlinearity of the EIT problem. In \cite{XuZou:2013b}, the continuity
of the parameter-to-state map from $L^2(\Gamma_i)$ to $L^2(\Gamma_c)$ plays a crucial role. For
the EIT, only the $H^1(\Omega)$ weak continuity of the forward map holds (cf. Lemma
\ref{lem:weakcontmedpoly}), and we shall exploit the pointwise convergence of discrete minimizers
and Lebesgue's dominated convergence theorem.
This allows us to establish the $H^1(\Omega)$ convergence of discrete state variables
(cf. Theorem \ref{thm:conver_medmin}), and thus enables  us to verify that
the limiting solution also satisfies the optimality system of the continuous
functional (Lemmas \ref{lem:vp_mc} and \ref{lem:gat_mc}).

The rest of this paper is organized as follows. In Section \ref{sect:prelim}, we describe the CEM,
regularized least-squares formulation and its necessary optimality system. The finite element discretization
is described, and an  adaptive FEM algorithm for the EIT is proposed in Section \ref{sect:polydom}, where a
heuristic yet constructive derivation is also provided. The convergence analysis of the adaptive algorithm is given in Section
\ref{sect:conv}. Some numerical results are given in Section \ref{sec:numer}
to illustrate its convergence and efficiency. We conclude the section with some notation. We shall use the
standard notation for Sobolev spaces, following \cite{Evans:1992a}. Further, we use $\langle\cdot,\cdot\rangle$ and
$(\cdot,\cdot)$ to denote the inner product on the Euclidean space and $(L^2(\Omega))^d$, respectively, by $\|\cdot\|$
the Euclidean norm,  and occasionally abuse $\langle\cdot,\cdot\rangle$ for the duality pairing between the space
$\mathbb{H}$ and its dual space. Throughout, the notation $c$ denotes
a generic constant, which may differ at each occurrence, but is always
independent of the mesh size and other quantities of interest.

\section{Preliminaries}\label{sect:prelim}
We shall recall in this section
the mathematical model for the EIT problem and describe the reconstruction technique based on
Tikhonov regularization and its necessary optimality system.

\subsection{Complete electrode model}\label{subsect:cem}
Let $\Omega$ be an open bounded domain in $\mathbb{R}^{d}$ $(d=2,3)$ with a polyhedral boundary $\Gamma$.
We denote the set of electrodes by $\{e_l\}_{l=1}^L$, which are line segments/planar surfaces on
$\Gamma$ and disjoint from each other, i.e., $\bar{e}_i\cap\bar{e}_k=\emptyset$
if $i\neq k$. The applied current on the $l$th electrode $e_l$ is denoted by $I_l$, and the current vector
$I=(I_1,\ldots,I_L)^\mathrm{t}$ satisfies $\sum_{l=1}^LI_l=0$ by the law of charge conservation.
Let the space $\mathbb{R}_\diamond^L$ be the subspace of the vector space $\mathbb{R}^L$ with zero mean.
Then we have $I\in\mathbb{R}_\diamond^L$. The electrode voltage $U=(U_1,\ldots,U_L)^\mathrm{t}$ is
also normalized such that $U\in\mathbb{R}_\diamond^L$. Then the
CEM reads: given the conductivity $\sigma$, positive contact
impedances $\{z_l\}_{l=1}^L$ and input current $I\in\mathbb{R}_\diamond^L$, find the
potential $u\in H^1(\Omega)$ and electrode voltage $U\in\mathbb{R}_\diamond^L$ such that
\begin{equation}\label{eqn:cem}
\left\{\begin{aligned}
\begin{array}{ll}
-\nabla\cdot(\sigma\nabla u)=0 & \mbox{ in }\Omega,\\[1ex]
u+z_l\sigma\frac{\partial u}{\partial n}=U_l& \mbox{ on } e_l, l=1,2,\ldots,L,\\[1ex]
\int_{e_l}\sigma\frac{\partial u}{\partial n}ds =I_l& \mbox{ for } l=1,2,\ldots, L,\\ [1ex]
\sigma\frac{\partial u}{\partial n}=0&\mbox{ on } \Gamma\backslash\cup_{l=1}^Le_l.
\end{array}
\end{aligned}\right.
\end{equation}

The physical motivation behind the model \eqref{eqn:cem} is as follows. The governing equation is
derived under a quasi-static assumption on the electromagnetic process. The second
line describes the contact impedance effect: When injecting electrical currents into the object, a highly
resistive thin layer forms at the electrode-electrolyte interface,
which causes potential drops across the electrode-electrolyte interface. The potential drop is described by Ohm's law,
with proportionality factors $\{z_l\}_{l=1}^L$.
It also takes into account the fact that metallic electrodes are perfect conductors, and hence the voltage $U_l$ is
constant on each electrode. The third line reflects the fact that the current $I_l$ injected through the
electrode $e_l$ is completely confined to $e_l$ itself. The nonstandard boundary conditions
is essential for the model \eqref{eqn:cem} to reproduce experimental data within the measurement precision
 \cite{ChengIsaacsonNewellGisser:1989,SomersaloCheneyIsaacson:1992}.

Due to physical constraint, the conductivity distribution $\sigma$ is naturally bounded both from
below and above by positive constants. Hence we introduce the following admissible set $\mathcal{A}$:
for some $\lambda\in (0,1)$, let
\begin{equation*}
 \mathcal{A}=\{{\lambda \in H^1(\Omega)}: \lambda\leq \sigma(x)\leq \lambda^{-1}\mbox{ a.e. } x\in\Omega\}.
\end{equation*}
The set $\mathcal{A}$ is endowed with the $H^1(\Omega)$-norm, in view of
the $H^1(\Omega)$-seminorm regularization, cf. \eqref{eqn:tikh} below. Further, we denote by $\mathbb{H}$ the product
space $H^1(\Omega)\otimes \mathbb{R}_\diamond^L$ with its norm defined by
\begin{equation*}
  \|(u,U)\|_{\mathbb{H}}^2 = \|u\|_{H^1(\Omega)}^2 + \|U\|^2.
\end{equation*}

A convenient equivalent norm on the space $\mathbb{H}$ is given below.
\begin{lemma}\label{lem:normequiv}
On the space $\mathbb{H}$, the norm $\|\cdot\|_\mathbb{H}$ is equivalent to the norm
$\|\cdot\|_{\mathbb{H},*}$ defined by
\begin{equation*}
  \|(u,U)\|_{\mathbb{H},*}^2 = \|\nabla u\|_{L^2(\Omega)}^2 + \sum_{l=1}^L\|u-U_l\|_{L^2(e_l)}^2.
\end{equation*}
\end{lemma}
\begin{proof}{
The lemma is a folklore result in the EIT community, and we provide a proof only for completeness.
It is easy to verify that $\|(u,U)\|_{\mathbb{H},*}$ indeed defines a proper norm.
It suffices to show the following two inequalities: there exist $c_1,c_2>0$ such that
\begin{equation*}
  c_1\|(u,U)\|_{\mathbb{H}} \leq \|(u,U)\|_{\mathbb{H},\ast}\leq c_2\|(u,U)\|_{\mathbb{H}}.
\end{equation*}
The second inequality follows from the Cauchy-Schwarz inequality and
trace theorem. We show the first inequality by contradiction. Assume the
contrary. Then there exists a sequence $\{(u^n,U^n)\}\subset \mathbb{H}$ such
that $\|(u^n,U^n)\|_{\mathbb{H}}=1$ and $\|(u^n,U^n)\|_{\mathbb{H},*}<n^{-1}$.
Then there exists a convergent subsequence, also denoted by $\{u^n\}$, to some $u\in H^1(\Omega)$
weakly in $H^1(\Omega)$. By the compact embedding of $H^1(\Omega)$ into $L^2(\Omega)$, the
sequence $\{u^n\}$ converges to $u$ in $L^2(\Omega)$. Further, by construction,
$\|\nabla u^n\|_{L^2(\Omega)}\leq n^{-1}$. Thus $\{u^n\}$ converges to $u$ in
$H^1(\Omega)$, and $u=c$ in the domain $\Omega$ for some $c\in\mathbb{R}$. By trace theorem
and Sobolev embedding theorem, $\{u^n\}$ converges to $u$ in $L^2(\Gamma)$. Since
$\|u^n-U_l^n\|_{L^2(e_l)}<n^{-1}$, $\{U_l^n\}$ converges to the trace of $u$ on $e_l$
for each $l=1,2,\ldots,L$, i.e., the limit $U=c(1,\ldots,1)^T$. Now the condition $U\in \mathbb{R}^L_\diamond$
implies $U=0$, $c=0$ and $u\equiv0$. Consequently, $u^n\to0$ in $H^1(\Omega)$ and $U^n\to0$
in $\mathbb{R}^L$, which contradicts the assumption $\|(u^n,U^n)\|_{\mathbb{H}}=1$.}
\end{proof}

The weak formulation of the model \eqref{eqn:cem} reads: find $(u,U)\in \mathbb{H}$ such that
\begin{equation}\label{eqn:cemweakform}
  a(\sigma,(u,U),(v,V)) = \langle I,V\rangle \quad \forall (v,V)\in \mathbb{H},
\end{equation}
where the trilinear form $a(\sigma,(u,U),(v,V))$ on $\mathcal{A}\times\mathbb{H}\times\mathbb{H}$ is defined by
\begin{equation*}
   a(\sigma,(u,U),(v,V)) = (\sigma \nabla u ,\nabla v ) +\sum_{l=1}^Lz_l^{-1}(u-U_l,v-V_l)_{L^2(e_l)},
\end{equation*}
where $(\cdot,\cdot)_{L^2(e_l)}$ denotes the $L^2(e_l)$ inner product. {By Lemma \ref{lem:normequiv},
for any $\sigma\in\mathcal{A}$, the bilinear form $a(\sigma,\cdot,\cdot)$ is continuous and coercive
on the space $\mathbb{H}$.} Hence, by Lax-Milgram theorem, for any fixed $\sigma\in\mathcal{A}$ and
given contact impedances $\{z_l\}_{l=1}^L$ and current $I\in \Sigma_\diamond^L$, there exists a unique
solution $(u,U)\equiv(u(\sigma),U(\sigma))\in\mathbb{H}$ to \eqref{eqn:cemweakform}, and it depends
continuously on the input current pattern $I$. {Since $\sigma\in\mathcal{A}$, one can
deduce that $u\in W^{1,q}(\Omega)$ for some $q>2$ \cite{JinMaass:2010}.} See also \cite{JinMaass:2010,GehreJinLu:2014,DunlopStuart:2015} for various continuity results of $(u,U)$ with respect to the conductivity $\sigma$.

{
\begin{remark}
Alternatively, one can formulate a proper variational formulation of the CEM \eqref{eqn:cem} on the
quotient space $\dot{\mathbb{H}}=(H^1(\Omega)\times \mathbb{R}^L)/\mathbb{R}$, with the norm defined by
\begin{equation*}
  \|(u,U)\|_{\dot{\mathbb{H}}}=\inf_{c\in\mathbb{R}}(\|u-c\|^2_{H^1(\Omega)}+\|U-c\|^2)^{1/2}.
\end{equation*}
Then the bilinear form $a(\sigma,\cdot,\cdot)$ is continuous and coercive on the space $\dot{\mathbb{H}}$;
see \cite{SomersaloCheneyIsaacson:1992} for details. It differs from the preceding one in the grounding
condition: in the choice $\mathbb{H}$, the grounding is enforced by the zero mean condition
$U\in\mathbb{R}_\diamond^L$.
\end{remark}
}
\subsection{Tikhonov regularization}\label{subsect:tikh}

The inverse problem is to reconstruct the conductivity $\sigma$ from noisy measurements $U^\delta$
of the exact electrode voltage $U(\sigma^\dagger)$, corresponding to one or multiple input currents, with
a noise level $\delta$:
\begin{equation*}
  \|U^\delta-U(\sigma^\dagger)\|\leq \delta.
\end{equation*}
It is severely ill-posed in the sense that small errors
in the data can lead to very large deviations in the reconstructions. Hence, some sort of regularization
is beneficial, and it is incorporated into imaging algorithms, either implicitly or explicitly, in order to yield
physically meaningful images. One prominent idea behind many existing imaging algorithms is Tikhonov
regularization, which minimizes the following functional
\begin{equation}\label{eqn:tikh}
   \min_{\sigma\in\mathcal{A}}\left\{J(\sigma) = \tfrac{1}{2} \|U(\sigma)-U^\delta\|^2 + \tfrac{\alpha}{2}\|\nabla\sigma\|_{L^2(\Omega)}^2\right\},
\end{equation}
and then takes the minimizer as an approximation to the true conductivity $\sigma^\dagger$.
The first term in the functional $J$ integrates the information in the data $U^\delta$. For notational simplicity,
we consider only one dataset in the discussion, and the adaptation to multiple datasets is straightforward. The
second term imposes a priori regularity assumption (Sobolev smoothness) on the expected conductivity $\sigma$. The scalar
$\alpha>0$ is known as a regularization parameter, and controls the tradeoff between the two terms \cite{ItoJin:2014}.
Problem \eqref{eqn:tikh} has at least one minimizer, and it depends continuously on the data perturbation
\cite{JinMaass:2010}. {The convergence of the Tikhonov minimizer to $\sigma^\dag$ as the noise level $\delta$
tends to zero was shown in \cite{JinMaass:2010}, if the true conductivity $\sigma^\dag\in H^1(\Omega)$, and
also a convergence rate $O(\delta^{1/2})$ was given under suitable source condition as $\delta\to0$, both under a
proper choice of regularization parameter $\alpha$.}

{Following the standard adjoint technique (see, e.g., \cite{ItoKunisch:2008}),} we introduce the following
adjoint problem for \eqref{eqn:cemweakform}: find $(p,P)\equiv(p(\sigma),P(\sigma))
\in\mathbb{H}$ such that
\begin{equation}\label{eqn:cem-adj}
  a(\sigma,(p,P),(v,V)) = \langle U(\sigma)-U^\delta,V\rangle \quad \forall (v,V)\in \mathbb{H}.
\end{equation}
Then it can be verified that the G\^{a}teaux derivative of $J(\sigma)$ at $\sigma\in\mathcal{A}$
in the direction $\mu$ is given by
\begin{equation*}
  J'(\sigma)[\mu] = (\alpha\nabla\sigma,\nabla\mu)- (\mu \nabla u(\sigma),\nabla p(\sigma)).
\end{equation*}
Then the minimizer $\sigma^*$ to problem \eqref{eqn:tikh} and the respective forward solution
$(u^*,U^*)$ and the adjoint solution $(p^*,P^*)$ satisfies the following necessary optimality system:
\begin{equation}\label{eqn:cem-optsys}
   \begin{aligned}
     &a(\sigma^*,(u^*,U^*),(v,V)) = \langle I,V\rangle \quad \forall (v,V)\in \mathbb{H},\\
     &a(\sigma^*,(p^*,P^*),(v,V)) = \langle U^*-U^\delta,V\rangle       \quad\forall(v,V)\in\mathbb{H},\\
     &        \alpha (\nabla\sigma^{\ast},\nabla(\mu-\sigma^{\ast})) - ((\mu-\sigma^{\ast})\nabla u^{\ast},\nabla p^{\ast})\geq
            0\quad\forall \mu\in\mathcal{A},
    \end{aligned}
\end{equation}
where the variational inequality at the last line corresponds to the box constraint in the admissible set $\mathcal{A}$.

\section{Adaptive finite element method}\label{sect:polydom}

Now we describe the finite element method (FEM) for discretizing problem \eqref{eqn:tikh}, derive
the a posteriori error estimator and develop a novel adaptive algorithm, which uses a general
marking strategy and thus is easy to implement. The convergence analysis of the algorithm
will be presented in Section \ref{sect:conv}.

\subsection{Finite element discretization}
To discretize the problem, we first triangulate the domain $\Omega$. Let $\cT$ be a shape regular triangulation
of the polyhedral domain $\overline{\Omega}$ consisting of closed simplicial elements, with a local mesh size
$h_{T}:=|T|^{1/d}$ for each element $T\in\cT$, which is assumed to intersect at most one electrode surface $e_l$.
On the triangulation $\cT$, we define a continuous piecewise linear finite element space
\begin{equation*}
   V_\cT = \left\{v\in C(\overline{\Omega}): v|_T\in P_1(T)\ \forall T\in\cT\right\},
\end{equation*}
where the space $P_1(T)$ consists of all linear functions on the element $T$. The space $V_\cT$ is also used
for approximating the potential $u$ and the conductivity $\sigma$. The use of piecewise linear
finite elements is popular since the problem data, e.g., boundary conditions, have only limited regularity.

Now we can describe the FEM approximation. First, we approximate the
forward map $(u(\sigma),U(\sigma))\in \mathbb{H}$ by $(u_{\cT},U_{\cT})\equiv(u_{\cT}(\sigma_{\cT}),
U_{\cT}(\sigma_{\cT}))\in \mathbb{H}_{\cT}\equiv V_{\cT}\otimes\mathbb{R}_\diamond^L$ defined by
\begin{equation}\label{eqn:dispoly}
  a(\sigma_\cT,(u_\cT,U_\cT),(v_\cT,V)) = \langle I, V\rangle
  \quad (v_{\cT},V)\in \mathbb{H}_\cT,
\end{equation}
where the (discretized) conductivity $\sigma_{\cT}$ lies in the discrete admissible set
\begin{equation*}
  \mathcal{A}_{\cT}=\{\sigma_{\cT}\in V_{\cT}:\lambda\leq \sigma_{\cT}\leq \lambda^{-1}\ \mbox{ a.e. } \Omega\}=\mathcal{A}\cap V_{\cT}.
\end{equation*}
Then the discrete optimization problem reads
\begin{equation}\label{eqn:discopt}
  \min_{\sigma_{\cT}\in\mathcal{A}_{\cT}}\left\{J_{\cT}(\sigma_{\cT}) =\tfrac{1}{2}\|U_{\cT}(\sigma_{\cT})-U^\delta\|^2 + \tfrac{\alpha}{2}\|\nabla\sigma_{\cT}\|_{L^2(\Omega)}^2\right\}.
\end{equation}
Due to the compactness of the finite-dimensional space $\mathcal{A}_\cT$, it is easy to see that there exists at least one minimizer $\sigma_\cT^*$ to
problem \eqref{eqn:dispoly}-\eqref{eqn:discopt} (see, e.g., \cite{GehreJinLu:2014}). The minimizer $\sigma_{\cT}^{\ast}$
and the related forward solution $(u^*_\cT,U^*_\cT)\equiv(u^{\ast}_{\cT}(\sigma_{\cT}^{\ast}),U^{\ast}_{\cT}(\sigma_{\cT}^{\ast}))\in \mathbb{H}_\cT$
and adjoint solution $(p^*_\cT,P^*_\cT)\equiv(p^{\ast}_{\cT}(\sigma_{\cT}^{\ast}),P^{\ast}_{\cT}(\sigma_{\cT}^{\ast}))
\in\mathbb{H}_{\cT}$ satisfies the following necessary optimality system
\begin{equation}\label{eqn:cem-discoptsys}
    \begin{aligned}
      &a(\sigma_\cT^*,(u^*_\cT,U_\cT^*),(v_\cT,V)) = \langle I,V\rangle \quad\forall(v_{\cT},V)\in \mathbb{H}_{\cT},\\
      &a(\sigma_\cT^*,(p^*_\cT,P^*_\cT),(v_\cT,V)) = \langle U^*_\cT-U^\delta,V\rangle\quad \forall(v_\cT,V)\in\mathbb{H}_{\cT},\\
      &\alpha(\nabla\sigma_{\cT}^{\ast},\nabla(\mu_{\cT}-\sigma_{\cT}^{\ast})) -((\mu_{\cT}-\sigma_{\cT}^{\ast})\nabla u_{\cT}^{\ast},\nabla p_{\cT}^{\ast})\geq 0\quad\forall\mu_{\cT}\in\mathcal{A}_{\cT},
    \end{aligned}
\end{equation}
which is the discrete analogue of \eqref{eqn:cem-optsys}. Like in the continuous
case, it is straightforward to verify that
the discrete solutions $(u_{\cT}^\ast,U_{\cT}^\ast)$ and $(p_{\cT}^\ast,P_{\cT}^\ast)$ depend continuously on the input
current pattern $I$, i.e.,
\begin{equation}\label{stab-discpolyadj}
    \|(u_{\cT}^\ast,U_{\cT}^\ast)\|_{\mathbb{H},\ast}+\|(p_{\cT}^\ast,U_{\cT}^\ast)\|_{\mathbb{H},\ast}\leq c(\|I\|+\|U^\delta\|),
\end{equation}
{where the constant $c$ can be made independent of $\alpha$.}

\subsection{Adaptive algorithm}

Now we can present a novel AFEM for problem \eqref{eqn:cemweakform}-\eqref{eqn:tikh}. First we introduce some notation.
Let $\mathbb{T}$ be the set of all possible conforming triangulations of the domain $\overline{\Omega}$ obtained from
some shape-regular initial mesh $\cT_0$ by the successive use of bisection. We call $\cT'\in\mathbb{T}$ a refinement of $\cT\in
\mathbb{T} $ if $\cT'$ can be obtained from $\cT$ by a finite number of bisections. The collection of all faces (respectively
all interior faces) in $\cT\in\mathbb{T}$ is denoted by $\mathcal{F}_{\cT}$ (respectively $\mathcal{F}_{\cT}^i$) and
its restriction on  the electrode $\bar{e}_{l}$ and $\Gamma\backslash\cup_{l=1}^Le_l$ by $\mathcal{F}_{\cT}^{l}$ and $\mathcal{F
}_{\cT}^{c}$, respectively. The scalar $h_{F}:=|F|^{1/(d-1)}$ denotes the diameter of a face $F\in\mathcal{F}_{\cT}$, which is
associated with a fixed normal unit vector $\boldsymbol{n}_{F}$ in $\overline{\Omega}$ with
$\boldsymbol{n}_{F}=\boldsymbol{n}$ on the boundary $\Gamma$. Further, we denote by $D_{T}$ (respectively $D_{F}$)
the union of all elements in $\cT$ with non-empty intersection with an element $T\in\cT$ (respectively $F\in\mathcal{F}_{\cT}$).

\begin{remark}
{Our convergence analysis covers any bisection method that ensures} that the family $\mathbb{T}$ is uniformly
shape regular during the refinement process, i.e., shape regularity
of any $\cT\in\mathbb{T}$ is uniformly bounded by a constant depending only on the initial mesh $\cT_0$
\cite[Lemma 4.1]{NSV:2009}, and thus all constants only depend
on the initial mesh $\cT_0$ and given data but not on any subsequent mesh. {Such bisection methods include
in particular newest vertex bisection in two dimensions \cite{Mitchell:1989} and the bisection of \cite{Koss:1995} in three dimensions.
Note that no interior node property is enforced between two consecutive refinements by bisection in our AFEM.}
\end{remark}

For the solution $(\sigma^{\ast}_{\cT},u_{\cT}^{\ast},U_{\cT}^{\ast},
p_{\cT}^{\ast},P_{\cT}^{\ast})$ to problem \eqref{eqn:cem-discoptsys}, we define
two element residuals for each element $T\in\cT$ and two face residuals for each face
$F\in\mathcal{F}_{\cT}$ by
\begin{equation*}
 \begin{aligned}
    R_{T,1}(\sigma_{\cT}^{\ast},u^{\ast}_{\cT}) & = \nabla\cdot(\sigma_{\cT}^{\ast}\nabla u^{\ast}_{\cT}),\\
    R_{T,2}(u^{\ast}_{\cT},p^{\ast}_{\cT})& =\nabla u^{\ast}_{\cT}\cdot\nabla p^{\ast}_{\cT},
 \end{aligned}
\end{equation*}
\begin{equation*}
  \begin{aligned}
    J_{F,1}(\sigma_{\cT}^{\ast},u^{\ast}_{\cT},U_{\cT}^{\ast}) &=
    \left\{\begin{array}{lll}
                        [\sigma_{\cT}^{\ast}\nabla u_{\cT}^{\ast}\cdot\boldsymbol{n}_{F}]\quad&
                        \mbox{for} ~~F\in\mathcal{F}_{\cT}^i,\\ [1ex]
                        \sigma_{\cT}^{\ast}\nabla u_{\cT}^{\ast}\cdot\boldsymbol{n}+(u_{\cT}^{\ast}-U_{\cT,l}^{\ast})/z_{l}\quad&
                        \mbox{for} ~~ F\in\mathcal{F}_{\cT}^l,\\ [1ex]
                        \sigma_{\cT}^{\ast}\nabla u_{\cT}^{\ast}\cdot\boldsymbol{n}\quad&
                        \mbox{for} ~~F\in\mathcal{F}_{\cT}^{c},
    \end{array}\right.\\
    J_{F,2}(\sigma^{\ast}_{\cT}) &= \left\{\begin{array}{lll}
        [\alpha\nabla\sigma_{\cT}^{\ast}\cdot\boldsymbol{n}_{F}]\quad&
                        \mbox{for} ~~F\in\mathcal{F}_{\cT}^i,\\ [1ex]
                        \alpha\nabla\sigma_{\cT}^{\ast}\cdot\boldsymbol{n}\quad&
                        \mbox{for} ~~F\in \mathcal{F}_\cT^l \cup \mathcal{F}_\cT^c,
        \end{array}\right.
\end{aligned}
\end{equation*}
where $[\cdot]$ denotes the jumps across interior faces $F$. Then for any collection of elements
$\mathcal{M}_{\cT}\subseteq\cT$, we introduce the following error estimator
\begin{equation}
  \begin{aligned}
    \quad\eta_{\cT}^{2}(\sigma^{\ast}_{\cT},&u^{\ast}_{\cT},U^{\ast}_{\cT},p^{\ast}_{\cT},P^{\ast}_{\cT},
    \mathcal{M}_{\cT})
    :=\sum_{T\in\mathcal{M}_{\cT}}\eta_{\cT}^{2}(\sigma_{\cT}^{\ast},u_{\cT}^{\ast},U_{\cT}^{\ast},
    p_{\cT}^{\ast},P_{\cT}^{\ast},T)\\
    &:=\sum_{T\in\mathcal{M}_{\cT}}\eta_{\cT,1}^{2}(\sigma_{\cT}^{\ast},u_{\cT}^{\ast},U_{\cT}^{\ast},T)
    +\eta_{\cT,2}^{2}(\sigma_{\cT}^{\ast},p_{\cT}^{\ast},P_{\cT}^{\ast},T)+
    \eta_{\cT,3}^{2}(\sigma_{\cT}^{\ast},u_{\cT}^{\ast},p_{\cT}^{\ast},T),
  \end{aligned}\label{def:estimator}
\end{equation}
where the three components $\eta_{\cT,i}^2$, $i=1,2,3$, are defined by
\begin{equation*}
   \begin{aligned}
    \eta_{\cT,1}^{2}(\sigma_{\cT}^{\ast},u_{\cT}^{\ast},U_{\cT}^{\ast},T)
    & :=h_{T}^{2}\|R_{T,1}(\sigma^{\ast}_{\cT},u^{\ast}_{\cT})\|_{L^{2}(T)}^{2}
    +\sum_{F\subset\partial T}h_{F}\|J_{F,1}(\sigma^{\ast}_{\cT},u^{\ast}_{\cT},U^{\ast}_{\cT})\|_{L^{2}(F)}^{2},\\
    \eta_{\cT,2}^{2}(\sigma_{\cT}^{\ast},p_{\cT}^{\ast},P_{\cT}^{\ast},T)
    &:=h_{T}^{2}\|R_{T,1}(\sigma^{\ast}_{\cT},p^{\ast}_{\cT})\|_{L^{2}(T)}^{2}
    +\sum_{F\subset\partial T}h_{F}\|J_{F,1}(\sigma^{\ast}_{\cT},p^{\ast}_{\cT},P^{\ast}_{\cT})\|_{L^{2}(F)}^{2},\\
    \eta_{\cT,3}^{2}(\sigma_{\cT}^{\ast},u_{\cT}^{\ast},p_{\cT}^{\ast},T)
    &:=h_{T}^{4}\|R_{T,2}(u_{\cT}^{\ast},p_{\cT}^{\ast})\|^{2}_{L^{2}(T)}+\sum_{F\subset\partial T}h_{F}^{3}\|J_{F,2}(\sigma_{\cT}^{\ast})\|^{2}_{L^{2}(F)}.
  \end{aligned}
\end{equation*}
We defer the derivation of the a posteriori error estimator $\eta_\cT(\sigma^{\ast}_{\cT},u^{\ast}_{\cT},
U^{\ast}_{\cT},p^{\ast}_{\cT},P^{\ast}_{\cT}, \mathcal{M}_{\cT})$ to Section \ref{ssec:discu} below.
The notation $\mathcal{M}_{\cT}$ will be omitted whenever $\mathcal{M}_{\cT}=\cT$. Note that the estimator
$\eta_{\cT}$ depends only on the discrete solutions $(\sigma^*_\cT,u_\cT^*,U^*_\cT,p^*_\cT,P_\cT^*)$ and
the given problem data (e.g., impedance coefficients $\{z_l\}_{l=1}^L$), and all the quantities involved
in $\eta_{\cT}$ are computable. {Further, the regularization parameter $\alpha$ enters the
estimator only through the face residual $J_{F,2}(\sigma^*_\cT)$.} It will be shown in Section \ref{ssec:convergence} that
this error estimator is sufficient for the convergence of the resulting adaptive algorithm.

Now we can formulate an adaptive algorithm for the EIT inverse problem, cf. Algorithm \ref{alg_afem_eit}.
Below we indicate the dependence on the triangulation $\cT_k$ by the iteration number $k$ in the subscript.
\begin{algorithm}
\caption{Adaptive finite element method for EIT}\label{alg_afem_eit}
\begin{algorithmic}[1]
  \STATE Specify a shape regular initial mesh $\cT_{0}$, and set $k:=0$.
  \STATE {(\texttt{SOLVE})} Solve problem \eqref{eqn:dispoly}-\eqref{eqn:discopt} over $\cT_{k}$ for the minimizer $(\sigma_k^{\ast},u_{k}^{\ast},U_{k}^{\ast})\in\mathcal{A}_{k}\times\mathbb{H}_{k}$ and the adjoint solution $(p_k^\ast,P_k^\ast)\in \mathbb{H}_k$; see \eqref{eqn:cem-discoptsys}.
  \STATE {(\texttt{ESTIMATE})} Compute the error estimator $\eta_{k}(\sigma_{k}^{\ast},u_{k}^{\ast},U_{k}^{\ast},p_{k}^{\ast},P_{k}^{\ast})$ by \eqref{def:estimator}.
  \STATE{(\texttt{MARK})} Mark a subset $\mathcal{M}_{k}\subseteq\cT_{k}$ with at least one element $\widetilde{T}\in\cT_{k}$ with the largest error indicator:
    \begin{equation}\label{eqn:marking}
        \eta_{k}(\sigma_k^{\ast},u_k^{\ast},U_k^{\ast},p_k^{\ast},P_k^{\ast},\widetilde{T})
        =\max_{T\in\cT_{k}}\eta_{k}(\sigma_k^{\ast},u_k^{\ast},U_k^{\ast},p_k^{\ast},P_k^{\ast},T).
    \end{equation}
   \STATE {(\texttt{REFINE})} Refine each element $T$ in $\mathcal{M}_{k}$ by bisection to get $\cT_{k+1}$.
   \STATE Set $k=k+1$, and return to Step 2, until a certain stopping criterion is fulfilled.
\end{algorithmic}
\end{algorithm}

\begin{remark}
The solver in the module \texttt{SOLVE} can be either a (projected) gradient descent method or iteratively regularized
Gauss-Newton method, each equipped with a suitable step size selection rule.
\end{remark}

\begin{remark}\label{rmk:marking&complex}
Assumption \eqref{eqn:marking} in the module \texttt{MARK} is fairly general, and it covers several commonly
used {collective} marking strategies, e.g., maximum strategy, equidistribution, modified equidistribution strategy, and
D\"{o}rfler's strategy \cite[pp. 962]{Siebert:2011}. Our convergence analysis in Section \ref{sect:conv} covers
all these marking strategies. {In the module \texttt{MARK}, one may also consider separate
marking. The motivation is to be able to neglect data oscillations, which have no importance for sufficiently fine
meshes. Numerically, this adds little computational overheads, since the module \texttt{SOLVE} is the most expensive step
at each iteration.}
\end{remark}

Last, we give an important geometric observation on the mesh sequence
$\{\cT_{k}\}$ and a stability result on error indicators
$\eta_{k,1}(\sigma_{k}^\ast,u^\ast_{k},U^\ast_{k})$,
$\eta_{k,2}(\sigma_{k}^\ast,p^\ast_{k},P^\ast_{k})$ and $\eta_{k,3}(\sigma_k^\ast,u^\ast_{k},p^\ast_{k})$
given in Algorithm \ref{alg_afem_eit}. Let
\begin{equation*}
    \cT_{k}^{+}:=\bigcap_{l\geq k}\cT_{l},\quad
    \cT_{k}^{0}:=\cT_{k}\setminus\cT_{k}^{+},\quad
    \Omega_{k}^{+}:=\bigcup_{T\in\cT^{+}_{k}}D_{T},\quad
    \Omega_{k}^{0}:=\bigcup_{T\in\cT^{0}_{k}}D_{T}.
\end{equation*}
That is, the set $\cT_{k}^{+}$ consists of all elements not refined after the $k$-th iteration while
all elements in $\cT_{k}^{0}$ are refined at least once after the $k$-th iteration. Clearly, $\cT_{l}^{+}
\subset\cT_{k}^{+}$ for $l<k$. We also define a mesh-size function $h_{k}:\overline{\Omega}\rightarrow
\mathbb{R}^{+}$ almost everywhere by $h_{k}(x)=h_{T}$ for $x$ in the interior of an element $T\in\cT_{k}$
and $h_{k}(x)=h_{F}$ for $x$ in the relative interior of an edge $F\in\mathcal{F}_{k}$. It has the following
important property in the region of $\Omega$ involving marked elements \cite[Corollary 3.3]{Siebert:2011}.
\begin{lemma} Let $\chi^{0}_{k}$ be the characteristic function of $\Omega_{k}^{0}$. Then
\label{lem:conv_zero_mesh}
$
    \lim_{k\rightarrow\infty}\|h_{k}\chi^{0}_{k}\|_{L^\infty(\Omega)}=0.
$
\end{lemma}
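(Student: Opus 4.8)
The plan is to argue by contradiction, exploiting two structural features of the mesh sequence produced by Algorithm~\ref{alg_afem_eit}: the nestedness of $\{\cT_k\}$, which makes the mesh-size function $h_k$ monotonically non-increasing and forbids a refined simplex from ever reappearing, and the fact that, under a bisection method with uniformly bounded shape regularity, only finitely many simplices of a prescribed minimal size can ever be generated from the fixed initial mesh $\cT_0$.

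First I would record the elementary consequences of nestedness. Since each $\cT_{k+1}$ is a refinement of $\cT_k$, every simplex $T\in\cT_k$ either survives into $\cT_{k+1}$ unchanged or is subdivided by bisection into children of strictly smaller diameter, in which case $T$ disappears from all subsequent meshes $\cT_l$, $l>k$. Hence any fixed simplex occurring in the refinement process is refined \emph{at most once}, and $h_{k+1}\le h_k$ almost everywhere. In particular, by definition an element $T\in\cT_k^0$ is one that is bisected at some later iteration, so its refinement event occurs exactly once at a well-defined step.

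Next I would set up the contradiction. Suppose $\|h_k\chi_k^0\|_{L^\infty(\Omega)}\not\to 0$. Then there exist $\varepsilon>0$, a subsequence (still indexed by $k$), and points $x_k\in\Omega_k^0$ with $h_k(x_k)\ge\varepsilon$. By the definitions of $\Omega_k^0$ and of $h_k$, each $x_k$ lies in $D_{T_k}$ for some $T_k\in\cT_k^0$, and the uniform shape regularity of the family $\mathbb{T}$ ensured by the bisection method makes the values of $h_k$ on $T$, on its faces $F\subset\partial T$, and on the neighbourhood $D_T$ all comparable to the diameter $h_T$. This yields a constant $c_0>0$, independent of $k$, and a simplex $T_k\in\cT_k^0$ with $h_{T_k}\ge c_0\varepsilon$. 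The crux is then a finiteness argument: let $\mathcal{E}_\varepsilon\subset\mathbb{T}$ be the set of all simplices of diameter at least $c_0\varepsilon$ that occur in any triangulation reachable from $\cT_0$ by bisection. Because the chosen bisection rule halves element diameters after a uniformly bounded number of bisections and $\cT_0$ has finitely many elements, only finitely many bisection levels can produce simplices of diameter $\ge c_0\varepsilon$; consequently $\mathcal{E}_\varepsilon$ is a \emph{finite} set. Each $T_k$ above belongs to $\mathcal{E}_\varepsilon$ and, lying in $\cT_k^0$, is refined at a later iteration and hence disappears permanently. Since each member of the finite set $\mathcal{E}_\varepsilon$ is refined at most once, there is a largest iteration $K$ at which any member of $\mathcal{E}_\varepsilon$ is refined; for $k>K$ we have $\cT_k^0\cap\mathcal{E}_\varepsilon=\emptyset$, contradicting the existence of $T_k\in\cT_k^0$ with $h_{T_k}\ge c_0\varepsilon$ for arbitrarily large $k$.

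I expect the finiteness of $\mathcal{E}_\varepsilon$ to be the main obstacle to make fully rigorous: it rests on the quantitative decay of element diameters under the specific bisection rule together with the uniform shape regularity of $\mathbb{T}$, which jointly bound the admissible bisection depth in terms of $\varepsilon$. The remaining bookkeeping, namely passing from the pointwise bound $h_k(x_k)\ge\varepsilon$ on $\Omega_k^0$ to a genuine large \emph{element} $T_k\in\cT_k^0$ via the equivalence of $h_k$ on $T$, its faces, and $D_T$, is routine once shape regularity is invoked.
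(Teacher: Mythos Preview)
The paper does not actually prove this lemma: it quotes the result from \cite[Corollary~3.3]{Siebert:2011} without argument. Your contradiction proof is correct and supplies a self-contained justification that the paper omits. The skeleton---extract from $\|h_k\chi_k^0\|_{L^\infty}\ge\varepsilon$ a large element $T_k\in\cT_k^0$ via local quasi-uniformity, observe that only finitely many such large simplices can ever arise from $\cT_0$, and use that each is refined at most once---is exactly the mechanism behind Siebert's result.

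One small tightening: you define $\mathcal{E}_\varepsilon$ as the set of large simplices appearing in \emph{any} triangulation reachable from $\cT_0$ by bisection. If the bisection rule is not deterministic this set could in principle be awkward to bound. It suffices, and is cleaner, to take $\mathcal{E}_\varepsilon$ to be the large simplices appearing in the \emph{specific} sequence $\{\cT_k\}$ produced by the algorithm. Then the refinement history is a genuine forest of binary trees rooted at the elements of $\cT_0$, each simplex has a well-defined generation $g$ with $|T|=2^{-g}|T_0|$, the bound $h_T\ge c_0\varepsilon$ forces $g\le G:=d\log_2(\max_{T_0\in\cT_0}h_{T_0}/(c_0\varepsilon))$, and the number of tree nodes of depth at most $G$ is at most $|\cT_0|\,(2^{G+1}-1)$. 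The pigeonhole step then goes through verbatim.
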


The next result gives preliminary bounds on the a posteriori error estimators. {Note that
only the constant $c$ for the estimator $\eta_{k,3}$ depends on the regularization parameter $\alpha$,
via the face residuals $J_{F,2}(\sigma_k^*)$, and all the constants can be naturally made independent of $\alpha$, if desired.}
\begin{lemma}\label{lem:stab-indicator}
Let the sequence of discrete solutions $\{(\sigma_{k}^\ast,u^\ast_{k},U^\ast_{k},p^\ast_{k},P^\ast_{k})\}$
be generated by Algorithm \ref{alg_afem_eit}. Then for each $T\in\cT_k$ with its face $F$, there hold
\begin{equation*}
   \begin{aligned}
    \displaystyle \eta_{k,1}^2(\sigma_{k}^\ast,u^\ast_{k},U^\ast_{k},T) &\leq c(\|\nabla u^\ast_{k}\|^2_{L^2(D_{T})}+h_{F}\|u^\ast_{k}-U^{\ast}_{k,l}\|^2_{L^2(F\cap e_{l})}),\\
    \displaystyle \eta_{k,2}^2(\sigma_{k}^\ast,p^\ast_{k},P^\ast_{k},T)&\leq c(\|\nabla p^\ast_{k}\|^2_{L^2(D_{T})}+h_{F}\|p_k^\ast-P^{\ast}_{k,l}\|^2_{L^2(F\cap e_{l})}),\\
   \displaystyle \eta_{k,3}^2(\sigma_{k}^\ast,u^\ast_{k},p^\ast_{k},T)&\leq
    c(h_{T}^{4-d}\|\nabla u^{\ast}_{k}\|^2_{L^2(T)}\|\nabla p^{\ast}_{k}\|^2_{L^2(T)}+h_{T}^2\|\nabla\sigma_{k}^\ast\|^2_{L^2(D_{T})}),
  \end{aligned}
\end{equation*}
{where $e_l$ denotes the electrode intersecting with the element $T\in\cT_k$.}
\end{lemma}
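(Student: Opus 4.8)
The plan is to exploit the fact that all the discrete quantities $\sigma_k^\ast$, $u_k^\ast$ and $p_k^\ast$ are continuous piecewise linear over $\cT_k$, so that on each element $T$ their gradients are constant and their Laplacians vanish; the three estimates then follow from elementwise explicit computations combined with an inverse estimate, the scaled trace inequality, and the uniform shape regularity of $\mathbb{T}$. Throughout I would repeatedly use that, by shape regularity, $h_F\simeq h_T$, $|F|\simeq h_T^{d-1}$ and $|T|\simeq h_T^d$ for every face $F\subset\partial T$, and that for a linear function $v$ on $T$ one has the inverse estimate $\|\nabla v\|_{L^\infty(T)}\le c\,h_T^{-1}\|v\|_{L^\infty(T)}$ with $c$ depending only on the initial mesh $\cT_0$.

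For the element residual of $\eta_{k,1}$ I would first note that, since $\Delta u_k^\ast=0$ on each $T$,
\[
  R_{T,1}(\sigma_k^\ast,u_k^\ast)=\nabla\cdot(\sigma_k^\ast\nabla u_k^\ast)=\nabla\sigma_k^\ast\cdot\nabla u_k^\ast
\]
is constant on $T$. Applying the inverse estimate to $\sigma_k^\ast$ and invoking the box constraint $\lambda\le\sigma_k^\ast\le\lambda^{-1}$ gives $\|\nabla\sigma_k^\ast\|_{L^\infty(T)}\le c\,h_T^{-1}$, whence $h_T^2\|R_{T,1}\|_{L^2(T)}^2\le c\,h_T^2h_T^{-2}\|\nabla u_k^\ast\|_{L^2(T)}^2=c\|\nabla u_k^\ast\|_{L^2(T)}^2$; this is the step that consumes the weight $h_T^2$. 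For the face residuals, on the element abutting an interior or complementary face $F$ the quantity $\sigma_k^\ast\nabla u_k^\ast\cdot\boldsymbol{n}_F$ is the trace of a piecewise linear function with $\|\sigma_k^\ast\nabla u_k^\ast\cdot\boldsymbol{n}_F\|_{L^2(F)}^2\le\lambda^{-2}|\nabla u_k^\ast|^2|F|$; multiplying by $h_F$ and using $h_F|F|\simeq|T|$ bounds it by $c\|\nabla u_k^\ast\|_{L^2(T)}^2$, and for interior faces I would sum over the two elements sharing $F$ to produce the $D_T$-norm. On an electrode face $F\in\mathcal{F}_k^l$ the extra Robin contribution $(u_k^\ast-U_{k,l}^\ast)/z_l$ is estimated directly as $h_Fz_l^{-2}\|u_k^\ast-U_{k,l}^\ast\|_{L^2(F\cap e_l)}^2$, which yields exactly the second term in the claimed bound. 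Collecting the element and face contributions over $F\subset\partial T$ gives the first estimate, and the estimate for $\eta_{k,2}$ is identical with $u_k^\ast,U_k^\ast$ replaced by $p_k^\ast,P_k^\ast$.

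For $\eta_{k,3}$, the residual $R_{T,2}(u_k^\ast,p_k^\ast)=\nabla u_k^\ast\cdot\nabla p_k^\ast$ is again constant on $T$, so that, writing $|\nabla u_k^\ast|^2=|T|^{-1}\|\nabla u_k^\ast\|_{L^2(T)}^2$ and similarly for $p_k^\ast$, one gets $\|R_{T,2}\|_{L^2(T)}^2\le|T|^{-1}\|\nabla u_k^\ast\|_{L^2(T)}^2\|\nabla p_k^\ast\|_{L^2(T)}^2$; multiplying by $h_T^4$ and using $|T|\simeq h_T^d$ produces the factor $h_T^{4-d}$ as stated. For the face residual $J_{F,2}(\sigma_k^\ast)=[\alpha\nabla\sigma_k^\ast\cdot\boldsymbol{n}_F]$ (or its one-sided version on the boundary), I would use that $\nabla\sigma_k^\ast$ is constant, so $\|\alpha\nabla\sigma_k^\ast\cdot\boldsymbol{n}_F\|_{L^2(F)}^2\le\alpha^2|\nabla\sigma_k^\ast|^2|F|$; multiplying by $h_F^3$, inserting $|\nabla\sigma_k^\ast|^2=|T|^{-1}\|\nabla\sigma_k^\ast\|_{L^2(T)}^2$ and using $h_F^3|F|/|T|\simeq h_T^2$ gives $c\,\alpha^2h_T^2\|\nabla\sigma_k^\ast\|_{L^2(T)}^2$, with the factor $\alpha^2$ absorbed into the constant (precisely the only place where $\alpha$ enters, as noted before the lemma). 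Summing over the two elements across interior faces supplies the $D_T$-norm and completes the third estimate.

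The only genuinely delicate point is the inverse estimate $\|\nabla\sigma_k^\ast\|_{L^\infty(T)}\le c\,h_T^{-1}$ driving the element residual of $\eta_{k,1}$: it is what allows the $h_T^2$ weight to cancel and leaves a bound in terms of $\|\nabla u_k^\ast\|_{L^2(D_T)}$ alone, and it relies crucially on the uniform box constraint $\lambda\le\sigma_k^\ast\le\lambda^{-1}$ together with the uniform shape regularity of $\mathbb{T}$ guaranteed by the admissible bisection methods. Everything else is bookkeeping with the scaling relations $h_F\simeq h_T$, $|F|\simeq h_T^{d-1}$ and $|T|\simeq h_T^d$.
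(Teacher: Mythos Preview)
Your proposal is correct and follows essentially the same approach as the paper: inverse estimates combined with the scaled trace inequality and local quasi-uniformity. The paper's proof is terser (it only spells out the third estimate, writing $h_T^4\|\nabla u_k^\ast\cdot\nabla p_k^\ast\|_{L^2(T)}^2\le ch_T^{4-d}\|\nabla u_k^\ast\cdot\nabla p_k^\ast\|_{L^1(T)}^2$ via an $L^2\to L^1$ inverse inequality and then Cauchy--Schwarz), whereas you exploit directly that the gradients are elementwise constant; but these are the same computation, and your explicit identification of the inverse estimate $\|\nabla\sigma_k^\ast\|_{L^\infty(T)}\le ch_T^{-1}$ (via the box constraint) as the key to handling the element residual in $\eta_{k,1}$ is exactly what the paper's ``analogously'' hides.
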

\begin{proof}
We only prove the third estimate, and the first two follow analogously.
By the inverse estimates and the trace theorem, the local quasi-uniformity
of $\cT_{k}$ yields
\begin{equation*}
  \begin{aligned}
    h_{T}^4\|\nabla u^*_k\cdot\nabla p^*_k\|^2_{L^2(T)} &\leq ch_T^{4-d}\|\nabla u^*_k\cdot\nabla p^*_k\|^2_{L^1(T)}\leq ch_{T}^{4-d}\|\nabla u^{\ast}_{k}\|^2_{L^2(T)}\|\nabla p^{\ast}_{k}\|_{L^2(T)}^2,\\
    \sum_{F\subset\partial T}h_F^3\|J_{F,2}(\sigma_k^*)\|_{L^2(F)}^2&\leq ch^2_T\|\nabla\sigma_k^*\|^2_{L^2(D_T)}.
  \end{aligned}
\end{equation*}
\end{proof}

\subsection{Derivation of a posteriori error estimators}\label{ssec:discu}
Now we motivate the a posteriori error estimator $\eta_\cT $ defined in \eqref{def:estimator} underlying the module
\texttt{ESTIMATE} of Algorithm \ref{alg_afem_eit}. The algorithm generates a sequence of discrete solutions $\{(\sigma_k^*,
u_k^*,U_k^*,p_k^*,P_k^*)\}$ in a sequence of finite element spaces $\{V_k\}$ and discrete admissible sets $\{\mathcal{A}_k\}$
over a sequence of meshes $\{\cT_k\}$. Naturally, some arguments in the a posteriori error estimation for direct problems will be
employed. We shall need the following results on the Lagrange interpolation operator $I_k:H^2(\Omega)\rightarrow V_{k}$
\cite{Ciarlet:2002} and the Scott-Zhang interpolation operator $I_k^{sz}:H^1(\Omega)\rightarrow V_k$ \cite{ScottZhang:1990}
over the triangulation $\mathcal{T}_{k}$.
\begin{lemma}\label{lem:est_int}
Let $\omega_{F}$ is the union of elements with $F$ as a face.    For any $T\in\mathcal{T}_{k}$ and any $F\in\mathcal{F}_{k}$,
    \begin{equation*}
      \begin{aligned}
        & \|v-I_kv\|_{L^2(T)}\leq ch^2_T\|v\|_{H^2(T)},\quad\|v-I_kv\|_{L^2(F)}\leq ch^{3/2}_T\|v\|_{H^2(\omega_{F})},\\
        &  \|v-I_k^{sz}v\|_{L^2(T)}\leq ch_T\|v\|_{H^1(D_T)},\quad\|v-I_k^{sz}v\|_{L^2(F)}\leq ch^{1/2}_T\|v\|_{H^1(D_{F})}.
      \end{aligned}
    \end{equation*}
\end{lemma}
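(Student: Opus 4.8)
The plan is to treat both estimates as standard local interpolation bounds, obtained by affine scaling to a fixed reference simplex together with the Bramble--Hilbert lemma, exploiting the polynomial reproduction properties of the two operators. The uniform shape regularity of the family $\mathbb{T}$ (as recorded in the preceding remark) is what guarantees that every constant below depends only on the initial mesh $\cT_0$ and not on the particular $\cT_k$.

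For the Lagrange operator $I_k$, which reproduces $P_1(T)$ on each element, I would fix $T$ and pull back to a reference simplex $\hat T$ through the affine map $F_T:\hat T\to T$. Since the pulled-back interpolant $\hat I_k$ fixes all of $P_1(\hat T)$, the Bramble--Hilbert lemma gives $\|\hat v-\hat I_k\hat v\|_{L^2(\hat T)}\le c|\hat v|_{H^2(\hat T)}$; combining the trace inequality on $\hat T$ with the same argument yields $\|\hat v-\hat I_k\hat v\|_{L^2(\hat F)}\le c|\hat v|_{H^2(\hat T)}$. Tracking the Jacobian factors under $F_T$ --- the $L^2(T)$ norm scales like $h_T^{d/2}$, the $L^2(F)$ norm like $h_T^{(d-1)/2}$, and the $H^2$ seminorm like $h_T^{d/2-2}$ --- then produces exactly the powers $h_T^2$ and $h_T^{3/2}$, with the right-hand domain being $T$ for the volume bound and the facet patch $\omega_F$ for the trace bound. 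These seminorm estimates are sharper than, and hence imply, the stated full-norm bounds.

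For the Scott--Zhang operator $I_k^{sz}$ the argument is identical in spirit but uses only $H^1$ regularity: $I_k^{sz}$ is built from $L^2$-type dual functionals on lower-dimensional facets, reproduces $P_1$, and is locally $H^1$-stable in the sense $\|I_k^{sz}v\|_{H^1(T)}\le c\|v\|_{H^1(D_T)}$. Applying Bramble--Hilbert relative to constants (now only one derivative is available) and scaling as above gives $\|v-I_k^{sz}v\|_{L^2(T)}\le ch_T|v|_{H^1(D_T)}$ and, through the reference-element trace inequality, $\|v-I_k^{sz}v\|_{L^2(F)}\le ch_T^{1/2}|v|_{H^1(D_F)}$, again dominated by the stated full $H^1(D_T)$ and $H^1(D_F)$ norms.

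Since the computation is entirely classical (see \cite{Ciarlet:2002,ScottZhang:1990}), the real content is bookkeeping rather than a genuine obstacle, and in the paper this lemma can essentially be cited. The one point that needs care is the nonlocality of the Scott--Zhang operator: because its value on $T$ (respectively on $F$) is determined by facet functionals of neighboring elements, the right-hand side must involve the element patch $D_T$ (respectively the face patch $D_F$) rather than $T$ or $\omega_F$ alone, and verifying the local $H^1$-stability on the reference configuration --- hence the uniformity of the constant over $\mathbb{T}$ --- is what distinguishes this estimate from a verbatim repetition of the Lagrange case.
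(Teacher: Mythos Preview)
Your proposal is correct and in fact supplies more than the paper does: the paper states Lemma~\ref{lem:est_int} without proof, simply citing \cite{Ciarlet:2002} and \cite{ScottZhang:1990} as sources for the Lagrange and Scott--Zhang estimates respectively. Your sketch via affine scaling and Bramble--Hilbert is the standard route to these bounds and is entirely appropriate; your own observation that ``in the paper this lemma can essentially be cited'' is exactly what happens.
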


To motivate the error estimator $\eta_\cT$, we begin with two auxiliary problems: find $(\widetilde{u}(\sigma_{k}^\ast),\widetilde{U}(\sigma_{k}^*))\in\mathbb{H}$ and $(\widetilde{p}(\sigma_{k}^\ast),\widetilde{P}(\sigma_{k}^*))\in\mathbb{H}$ such that
\begin{align}
    a(\sigma_{k}^*,(\widetilde{u},\widetilde{U}),(v,V)) & = \langle I,V\rangle \quad \forall (v,V)\in \mathbb{H},\label{eqn:cemwf_aux1}\\
    a(\sigma_k^*,(\widetilde{p},\widetilde{P}),(v,V)) & = \langle \widetilde{U}-U^\delta,V\rangle       \quad\forall(v,V)\in\mathbb{H}.\label{eqn:cemwf_aux2}
\end{align}
The first line in \eqref{eqn:cem-discoptsys} is actually the finite element scheme of \eqref{eqn:cemwf_aux1}
over $\mathcal{T}_k$. Hence, the standard a posteriori error analysis for forward problems can be applied. By setting
$v_k=I_k^{sz}v\in V_k$ in the first line in \eqref{eqn:cem-discoptsys} for any $(v,V)\in\mathbb{H}$, applying elementwise
integration by parts and Lemma \ref{lem:est_int}, there hold
\begin{equation*}
    \begin{aligned}
        a(\sigma_{k}^*,(\widetilde{u}-u_k^*,\widetilde{U}-U_k^*),(v,V))&=\langle I,V\rangle-(\sigma_{k}^\ast\nabla u_{k}^\ast,\nabla v)-\sum_{l=1}^Lz^{-1}_l(u_k^*-U^*_{k,l},v-V_l)_{L^2(e_l)}\\
        &=(\sigma_{k}^\ast\nabla u_{k}^\ast,\nabla(I_{k}^{sz}v-v))
        +\sum_{l=1}^Lz^{-1}_l(u_k^*-U^*_{k,l},I_{k}^{sz}v-v)_{L^2(e_l)}\\
        &\leq c\Big(\sum_{T\in\mathcal{T}_k}\eta^2_{k,1}(\sigma^*_k,u_k^*,U_k^*,T)\Big)^{1/2}\|v\|_{H^1(\Omega)}.
    \end{aligned}
\end{equation*}
Taking $(v,V)=(\widetilde{u}-u^*_k,\widetilde{U}-U^*_k)\in\mathbb{H}$ and using Lemma \ref{lem:normequiv} yield
\begin{equation}\label{ub_state_aux1}
    \|(\widetilde{u}-u^*_k,\widetilde{U}-U^*_k)\|_{\mathbb{H},\ast}\leq c\Big(\sum_{T\in\mathcal{T}_k}\eta^2_{k,1}(\sigma^*_k,u_k^*,U_k^*,T)\Big)^{1/2}.
\end{equation}
Further, from the first equation in \eqref{eqn:cem-optsys} and \eqref{eqn:cemwf_aux1} we find for any $(v,V)\in\mathbb{H}$
\begin{equation*}
        a(\sigma_k^\ast,(u^*-\widetilde{u},U^*-\widetilde{U}),(v,V))
        =((\sigma_k^*-\sigma^\ast)\nabla u^*,\nabla v)\leq \|(\sigma^\ast-\sigma_k^*)\nabla u^\ast\|_{L^2(\Omega)}\|\nabla v\|_{L^2(\Omega)}.
\end{equation*}
Consequently,
\begin{equation}\label{ub_state_aux2}
    \|(u^*-\widetilde{u},U^*-\widetilde{U})\|_{\mathbb{H},\ast}\leq
    c\|(\sigma^\ast-\sigma_k^*)\nabla u^\ast\|_{L^2(\Omega)}.
\end{equation}
Likewise, for $(p^\ast-p^\ast_k,P^*-P^*_k)$, we appeal to the second equation in the discrete optimality
system \eqref{eqn:cem-discoptsys} and the auxiliary problem \eqref{eqn:cemwf_aux2} to deduce
\begin{equation*}
    \begin{aligned}
        a(\sigma_k^*,(\widetilde{p}-p_k^*,&\widetilde{P}-P_k^*),(v,V))=\langle \widetilde{U}-U^\delta,V\rangle-a(\sigma_k^*,(p_k^*,P_k^*),(v,V))\\
        &=\langle \widetilde{U}-U_k^*,V\rangle+\langle U_k^*-U^\delta,V\rangle-a(\sigma_k^*,(p_k^*,P_k^*),(v,V))\\
        &=\langle\widetilde{U}-U_k^*,V\rangle+(\sigma_{k}^\ast\nabla p_{k}^\ast,\nabla(I_{k}^{sz}v-v))
        +\sum_{l=1}^Lz^{-1}_l(p_k^*-P^*_{k,l},I_{k}^{sz}v-v)_{L^2(e_l)}\\
        &\leq c\Big(\big(\sum_{T\in\mathcal{T}_k}\eta^2_{k,2}(\sigma^*_k,p_k^*,P_k^*,T)\big)^{1/2}+\|\widetilde{U}-U^*_k\|\Big)\|(v,V)\|_{\mathbb{H},\ast},
    \end{aligned}
\end{equation*}
and further
\begin{equation*}
    \begin{aligned}
    a(\sigma_k^\ast,(p^*-\widetilde{p},P^\ast-\widetilde{P}),(v,V))&=
    ((\sigma_k^*-\sigma^\ast)\nabla p^*,\nabla v)+\langle U^*-\widetilde{U},V\rangle\\
    &\leq\left(\|(\sigma^\ast-\sigma_k^*)\nabla p^\ast\|_{L^2(\Omega)}+\|U^*-\widetilde{U}\|\right)\|(v,V)\|_{\mathbb{H},\ast},
    \end{aligned}
\end{equation*}
which, together with \eqref{ub_state_aux1} and \eqref{ub_state_aux2} and Lemma \ref{lem:normequiv}, implies
\begin{equation}\label{ub_state_aux3}
    \begin{split}
    \|(p^\ast-p^\ast_k,P^*-P^*_k)\|_{\mathbb{H},\ast}
    &\leq c\Big(\big(\sum_{T\in\mathcal{T}_{k}}\eta^2_{k,1}(\sigma^*_k,u_k^*,U_k^*,T)+\eta^2_{k,2}(\sigma^*_k,p_k^*,P_k^*,T)\big)^{1/2}\\
     &\quad+\|(\sigma^\ast-\sigma_k^*)\nabla u^\ast\|_{L^2(\Omega)}+ \|(\sigma^\ast-\sigma_k^*)\nabla p^\ast\|_{L^2(\Omega)}\Big).
     \end{split}
\end{equation}

In view of \eqref{ub_state_aux1}-\eqref{ub_state_aux3}, the estimators $\eta_{k,1}$ and $\eta_{k,2}$ can bound
$(u^\ast-u^\ast_k,U^*-U^*_k)$ and $(p^\ast-p^\ast_k,P^*-P^*_k)$ from above up to the terms
$\|(\sigma^*-\sigma^*_k)\nabla u^*\|_{L^2(\Omega)}$ and $\|(\sigma^*-\sigma^*_k)\nabla p^*\|_{L^2(\Omega)}$,
which are not computable but asymptotically vanishing, provided that $\sigma_k^*\to\sigma^*$ pointwise.
This motivates our choice of a computable upper bound for $\sigma^*-\sigma^*_k$, upon discarding the
uncomputable terms.

To bound the term $\|\nabla(\sigma_k^*-\sigma^*)\|_{L^2(\Omega)}$, we appeal to the variational
inequalities in \eqref{eqn:cem-optsys} and \eqref{eqn:cem-discoptsys}. Since
$I_k\mu\in\mathcal{A}_k$ for any $\mu\in\mathcal{A}\cap C^\infty(\overline{\Omega})$, we deduce
\begin{equation*}
    \begin{aligned}
        \alpha\|\nabla(\sigma^*-\sigma^*_k)\|_{L^2(\Omega)}^2&\leq
        \alpha(\nabla\sigma_{k}^*,\nabla(\sigma_k^*-\sigma^\ast))-((\sigma_k^*-\sigma^\ast)\nabla u^*,\nabla p^*)\\
        &=\alpha(\nabla\sigma_k^*,\nabla(\sigma_k^*-\sigma^\ast))
        -((\sigma_k^*-\sigma^\ast)\nabla u_k^*,\nabla p_k^*)\\
        &\quad +(\nabla u_k^*\cdot\nabla p_k^*-\nabla u^*\cdot\nabla p^*,\sigma_k^*-\sigma^\ast)\\
        &\leq\alpha(\nabla\sigma_{k}^*,\nabla(I_k\mu-\sigma^\ast))-
        ((I_k\mu-\sigma^\ast)\nabla u_k^*,\nabla p_k^*)\\
        &\quad +(\nabla u_k^*\cdot\nabla p_k^* -\nabla u^*\cdot\nabla p^*,\sigma_k^*-\sigma^\ast)\\
        &=\alpha(\nabla\sigma_{k}^*,\nabla(I_k\mu-\mu))-
        (\nabla u_k^*,\nabla p_k^*(I_k\mu-\mu))\\
        &\quad+(\nabla u_k^*\cdot\nabla p_k^*-\nabla u^*\cdot\nabla p^*,\sigma_k^*-\sigma^\ast)\\
        &\quad+\alpha(\nabla\sigma_{k}^*,\nabla(\mu-\sigma^\ast))-
        ((\mu-\sigma^\ast)\nabla u_k^*,\nabla p_k^*):=\mathrm{I}+\mathrm{II}+\mathrm{III}.
    \end{aligned}
\end{equation*}
Now Lemma \ref{lem:est_int} and elementwise integration by parts yield
\begin{equation}\label{ub_state_aux4}
    |{\rm I}|\leq c(\sum_{T\in\cT_{k}}\eta_{k,3}^2(\sigma_{k}^\ast,u^*_k,p^*_k,T))^{1/2}\|\mu\|_{H^2(\Omega)}\quad
    \forall\mu\in\mathcal{A}\cap C^\infty(\overline{\Omega}).
\end{equation}
By the minimizing property of $\sigma_k^*$ for $J_k(\cdot)$, $\|\nabla\sigma_k^*\|_{L^2(\Omega)}$ is
bounded. Then the estimate \eqref{stab-discpolyadj} and the density of $\mathcal{A}\cap C^\infty(\overline{\Omega})$
in $\mathcal{A}$ ensure that the term $\mathrm{III}$ can be made arbitrarily small. For the term $\mathrm{II}$, we
have
\begin{equation*}
  \begin{aligned}
    |\mathrm{II}| & = |(\nabla u_k^*\cdot\nabla p_k^* - \nabla u_k^*\cdot\nabla p^* + \nabla u_k^*\cdot\nabla p^* - \nabla u^*\cdot\nabla p^*,\sigma^k-\sigma^*)|\\
     & \leq \|\nabla(p_k^*-p^*)\|_{L^2(\Omega)}\|(\sigma_k^*-\sigma^*)\nabla u_k^*\|_{L^2(\Omega)} + \|\nabla(u_k^*-u^*)\|_{L^2(\Omega)}\|(\sigma_k^*-\sigma^*)\nabla p^*\|_{L^2(\Omega)},
  \end{aligned}
\end{equation*}
which are expected to be higher order terms. Upon discarding the uncomputable terms $\|(\sigma^\ast-\sigma_k^*)
\nabla u^\ast\|_{L^2(\Omega)}$ and $\|(\sigma^\ast-\sigma_k^*)\nabla p^\ast\|_{L^2(\Omega)}$ in
\eqref{ub_state_aux2}-\eqref{ub_state_aux3} and the nonlinear term $\mathrm{II}$, we get all computable
quantities in \eqref{ub_state_aux1}, \eqref{ub_state_aux3} and \eqref{ub_state_aux4}, which are exactly the
a posteriori error estimator $\eta_{k}$ defined in \eqref{def:estimator}. Thus we may view it as a reliable upper
bound for the error and employ it in the module \texttt{ESTIMATE} to drive the adaptive
refinement process. Moreover the derivation of \eqref{ub_state_aux4} suggests itself a natural way to handle the
variational inequality in \eqref{eqn:cem-optsys} in the convergence analysis,
which will be presented in Section \ref{sect:conv} below.

\section{Convergence analysis}\label{sect:conv}

In this section, we shall establish the main theoretical result of this work,
the convergence of Algorithm \ref{alg_afem_eit}, namely the sequence of discrete solutions
$\{(\sigma_k^*,u_k^*,U_k^*,p_k^*,P_k^*)\}$ to the optimality system \eqref{eqn:cem-discoptsys} generated by Algorithm
\ref{alg_afem_eit}, contains a subsequence converging in $H^1(\Omega)\times\mathbb{H}\times\mathbb{H}$
to a solution to the optimality system \eqref{eqn:cem-optsys}. The main technical difficulty lies in the lack of density
of the adaptively generated FE space $V_{k}$ in the space $H^1(\Omega)$. To overcome the challenge, the proof is
carried out in two steps. In the first step (Section \ref{ssec:limitingproblem}), we analyze a ``limiting'' optimization problem posed over a limiting set
induced by $\{\mathcal{A}_{k}\}$, and show that the sequence of discrete solutions contains a convergent
subsequence to a minimizer to the limiting problem. In the second step (Section \ref{ssec:convergence}),
we show that the solution to the optimality system for the limiting problem actually solves the optimality system \eqref{eqn:cem-optsys}.
It is worth noting that all the proofs in Section \ref{ssec:limitingproblem} only depends on the nestedness
of finite element spaces $\{V_k\}$ and discrete admissible sets $\{\mathcal{A}_k\}$, and the error estimator \eqref{def:estimator} and
the marking assumption \eqref{eqn:marking} are used only in Section \ref{ssec:convergence}.

\subsection{Limiting optimization problem}\label{ssec:limitingproblem}

For the sequences $\{\mathbb{H}_{k}\}$ and $\{\mathcal{A}_{k}\}$ generated by Algorithm \ref{alg_afem_eit},
we define a limiting finite element space $\mathbb{H}_\infty$ and a limiting admissible set $\mathcal{A}_\infty$ respectively by
\begin{equation*}
    \mathbb{H}_{\infty}:=\overline{\bigcup_{k\geq 0}\mathbb{H}_{k}}~ (\mbox{in}~\mathbb{H},\ast\mbox{-norm})\quad\mbox{and}\quad
    \mathcal{A}_{\infty}:=\overline{\bigcup_{k\geq 0}\mathcal{A}_{k}}~(\mbox{in}~H^{1}(\Omega)\mbox{-norm}).
\end{equation*}
It is easy to see that $\mathbb{H}_{\infty}$ is a closed subspace of $\mathbb{H}$. For the set $\mathcal{A}_{\infty}$, we have the following lemma.

\begin{lemma}\label{lem:convexclosed}
    $\mathcal{A}_{\infty}$ is a closed convex subset of $\mathcal{A}$.
\end{lemma}
\begin{proof}
The definition of $\mathcal{A}_{\infty}$ implies its strong closedness. For any $\mu$ and $\nu$ in
$\mathcal{A}_{\infty}$, there exist two sequences $\{\mu_{k}\}$ and $\{\nu_{k}\}\subset\bigcup_{k\geq
0}\mathcal{A}_{k}$ such that $\mu_{k}\rightarrow\mu$ and $\nu_{k}\rightarrow\nu$ in $H^{1}(\Omega)$.
The convexity of the set $\mathcal{A}_{k}$ implies $\{t\mu_{k}+(1-t)\nu_{k}\}\subset\bigcup_{k\geq 0}
\mathcal{A}_{k}$ for any $t\in(0,1)$. Then $t\mu_{k}+(1-t)\nu_{k}\rightarrow t\mu+(1-t)\nu$ in
$H^{1}(\Omega)$, i.e. $t\mu+(1-t)\nu\in\mathcal{A}_{\infty}$ for any $t\in (0,1)$. Hence $\mathcal{A}_{
\infty}$ is convex. Moreover, we have $\mu_{k}\rightarrow\mu$ a.e. in $\Omega$ after (possibly) passing
to a subsequence, which, along with the constraint $\lambda\leq\mu_{k}\leq\lambda^{-1}$ a.e. in $\Omega$,
indicates that $\lambda\leq\mu\leq\lambda^{-1}$ a.e. in $\Omega$. Lastly, the fact that $\mathcal{A}_{\infty}
\subset H^1(\Omega)$ concludes  $\mathcal{A}_{\infty}\subset\mathcal{A}$.
\end{proof}

Over the limiting set $\mathcal{A}_{\infty}$, we introduce a limiting minimization problem:
\begin{equation}\label{eqn:medopt}
  \min_{\sigma_{\infty}\in\mathcal{A}_{\infty}}\left\{J_{\infty}(\sigma_{\infty}) = \tfrac{1}{2}\|U_{\infty}(\sigma_{\infty})-U^\delta\|^2 + \tfrac{\alpha}{2}\|\nabla\sigma_{\infty}\|_{L^2(\Omega)}^2\right\},
\end{equation}
where $(u_{\infty},U_{\infty})\equiv(u_{\infty}(\sigma_{\infty}),U_{\infty}(\sigma_{\infty}))
\in \mathbb{H}_{\infty}$ satisfies the variational problem:
\begin{equation}\label{eqn:medpoly}
  a(\sigma_\infty,(u_\infty,U_\infty),(v,V)) = \langle I,V\rangle \quad \forall (v,V)\in\mathbb{H}_\infty.
\end{equation}
By Lemma \ref{lem:normequiv} and Lax-Milgram theorem, the limiting variational problem
\eqref{eqn:medpoly} is well-posed for any fixed $\sigma_\infty\in\mathcal{A}_\infty$.
The next result shows the existence of a minimizer to the limiting problem \eqref{eqn:medopt}-\eqref{eqn:medpoly}.
\begin{theorem}\label{thm:existmedmin}
There exists at least one minimizer to problem \eqref{eqn:medopt}-\eqref{eqn:medpoly}.
\end{theorem}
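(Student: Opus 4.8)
The goal is to establish existence of a minimizer to the limiting problem \eqref{eqn:medopt}–\eqref{eqn:medpoly} via the standard direct method in the calculus of variations. The plan is to take a minimizing sequence $\{\sigma_\infty^n\}\subset\mathcal{A}_\infty$ for $J_\infty$, extract a subsequence converging to a limit $\sigma_\infty^*$, and show that $\sigma_\infty^*$ lies in $\mathcal{A}_\infty$ and is a minimizer. First I would observe that $J_\infty$ is bounded below (by zero), so the infimum $m:=\inf_{\sigma_\infty\in\mathcal{A}_\infty}J_\infty(\sigma_\infty)$ is finite and a minimizing sequence exists. Because the regularization term $\tfrac{\alpha}{2}\|\nabla\sigma_\infty^n\|_{L^2(\Omega)}^2$ is part of $J_\infty$, it is uniformly bounded along the minimizing sequence; combined with the pointwise box constraint $\lambda\le\sigma_\infty^n\le\lambda^{-1}$ (hence a uniform $L^2(\Omega)$ bound), this yields a uniform $H^1(\Omega)$ bound on $\{\sigma_\infty^n\}$.

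Next I would extract a subsequence (not relabeled) converging weakly in $H^1(\Omega)$ to some $\sigma_\infty^*$, and, by the compact embedding $H^1(\Omega)\hookrightarrow\hookrightarrow L^2(\Omega)$, strongly in $L^2(\Omega)$ and pointwise a.e. Since $\mathcal{A}_\infty$ is closed and convex by Lemma \ref{lem:convexclosed}, it is weakly closed in $H^1(\Omega)$, so $\sigma_\infty^*\in\mathcal{A}_\infty$. The regularization term is weakly lower semicontinuous, giving
\begin{equation*}
  \tfrac{\alpha}{2}\|\nabla\sigma_\infty^*\|_{L^2(\Omega)}^2\le\liminf_{n\to\infty}\tfrac{\alpha}{2}\|\nabla\sigma_\infty^n\|_{L^2(\Omega)}^2.
\end{equation*}
For the data-fidelity term I must pass to the limit in $U_\infty(\sigma_\infty^n)$, where $(u_\infty(\sigma_\infty^n),U_\infty(\sigma_\infty^n))$ solves the limiting variational problem \eqref{eqn:medpoly}. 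Using the uniform bound on the forward solutions (an analogue of \eqref{stab-discpolyadj} restricted to $\mathbb{H}_\infty$), I extract a further subsequence with $(u_\infty^n,U_\infty^n)\rightharpoonup(u_\infty^*,U_\infty^*)$ weakly in $\mathbb{H}_\infty$. The task is then to identify this weak limit as the forward solution $(u_\infty(\sigma_\infty^*),U_\infty(\sigma_\infty^*))$ at the limiting conductivity.

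The main obstacle will be precisely this last continuity step: passing to the limit in the nonlinear term $(\sigma_\infty^n\nabla u_\infty^n,\nabla v)$, which involves a product of the weakly converging $\nabla u_\infty^n$ and the coefficient $\sigma_\infty^n$. The resolution I expect is to exploit the strong (and pointwise a.e.) convergence $\sigma_\infty^n\to\sigma_\infty^*$ together with the uniform bounds $\lambda\le\sigma_\infty^n\le\lambda^{-1}$: since $\sigma_\infty^n\nabla v\to\sigma_\infty^*\nabla v$ strongly in $L^2(\Omega)$ for fixed $v$ (by dominated convergence, the bounds providing the dominating function), pairing against the weakly convergent $\nabla u_\infty^n$ lets me pass to the limit in \eqref{eqn:medpoly}, so that $(u_\infty^*,U_\infty^*)$ satisfies the limiting variational equation at $\sigma_\infty^*$; by uniqueness (Lax--Milgram) it equals $(u_\infty(\sigma_\infty^*),U_\infty(\sigma_\infty^*))$, and in particular $U_\infty^n\to U_\infty(\sigma_\infty^*)$ in the finite-dimensional space $\mathbb{R}_\diamond^L$. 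This gives continuity of the fidelity term, and combining with the weak lower semicontinuity of the penalty yields
\begin{equation*}
  J_\infty(\sigma_\infty^*)\le\liminf_{n\to\infty}J_\infty(\sigma_\infty^n)=m,
\end{equation*}
so $\sigma_\infty^*$ is a minimizer. This is essentially the same machinery used to prove existence for the continuous problem \eqref{eqn:tikh} in \cite{JinMaass:2010} and the discrete problem \eqref{eqn:discopt}, now carried out over the closed subspace $\mathbb{H}_\infty$ and closed convex set $\mathcal{A}_\infty$.
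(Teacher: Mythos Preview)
Your proposal is correct and follows essentially the same route as the paper: a direct-method argument using a minimizing sequence, the weak closedness of $\mathcal{A}_\infty$ from Lemma~\ref{lem:convexclosed}, uniform bounds on the forward solutions, and passage to the limit in the bilinear form via the pointwise convergence of $\sigma_\infty^n$ combined with dominated convergence. The only cosmetic difference is that the paper splits $(\sigma^n\nabla u^n,\nabla v)=((\sigma^n-\sigma^*)\nabla u^n,\nabla v)+(\sigma^*\nabla u^n,\nabla v)$ and bounds the first piece by $\|\nabla u^n\|_{L^2}\|(\sigma^n-\sigma^*)\nabla v\|_{L^2}$, whereas you pair the weakly convergent $\nabla u^n$ directly against the strongly convergent $\sigma^n\nabla v$; these are the same computation.
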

\begin{proof}
It is clear that $\inf J_{\infty}(\sigma)$ is finite over $\mathcal{A}_{\infty}$,
so there exists a minimizing sequence $\{\sigma^{n}\}\subset\mathcal{A}_{\infty}$, i.e., $$\lim_{n\to\infty}J_{\infty}(\sigma^{n})=\inf_{\sigma\in\mathcal{A}_{\infty}}J_{\infty}(\sigma).$$
Thus, the sequence $\{\sigma^n\}$ is uniformly bounded in $H^1(\Omega)$, and
by Sobolev embedding theorem and Lemma \ref{lem:convexclosed}, there exists a subsequence,
relabeled as $\{\sigma^{n}\}$, and some $\sigma^{\ast}\in\mathcal{A}_{\infty}$ such that
$ \sigma^{n}\rightarrow\sigma^{\ast}$ weakly in $H^{1}(\Omega)$, $\sigma^{n}\rightarrow\sigma^{\ast}$ a.e. in $\Omega$.
By taking $\sigma_{\infty}=\sigma^{n}\in\mathcal{A}_\infty$ in \eqref{eqn:medpoly},
then $(u^{n},U^{n})\equiv(u^{n}(\sigma^{n}),U^{n}(\sigma^{n}))\in {\mathbb{H}_\infty\subset\mathbb{H}}$ satisfies
\begin{equation}\label{eqn:weak-medmin}
  a(\sigma^n,(u^n,U^n),(v,V)) = \langle I,V\rangle\quad\forall (v,V)\in \mathbb{H}_\infty.
\end{equation}
Then by Lemma \ref{lem:normequiv}, $\{(u^n,U^n)\}$ is uniformly bounded in $\mathbb{H}$,
which gives a subsequence, also denoted by $\{(u^{n},U^{n})\}$, and some
$(u^{\ast},U^{\ast})\in\mathbb{H}_{\infty}$ such that
\begin{equation}\label{eqn:medmin-weakconv}
   (u^{n},U^n)\rightarrow (u^{\ast},U^*)\quad\mbox{weakly in}~\mathbb{H}\quad\mbox{and}\quad
   u^{n}\rightarrow u^{\ast}\quad\mbox{in}~L^{2}(\Gamma).
\end{equation}
We claim that $(u^*,U^*)=(u^*(\sigma^*),U^*(\sigma^*))\in \mathbb{H}_\infty$. To this end,
first we observe the splitting
\begin{equation*}
   (\sigma^{n}\nabla u^{n}, \nabla v ) =((\sigma^{n}-\sigma^{\ast})\nabla u^{n}, \nabla v)
        +(\sigma^{\ast}\nabla u^{n},\nabla v).
\end{equation*}
The pointwise convergence of the sequence $\{\sigma^n\}$, Lebesgue's dominated
convergence theorem (\cite{Evans:1992a}) and the uniform boundedness of $\{u^n\}$ in $H^1(\Omega)$ imply that
\begin{equation*}
   |((\sigma^{n}-\sigma^{\ast})\nabla u^{n},\nabla v)|
   \leq\|\nabla u^{n}\|_{L^{2}(\Omega)}\|(\sigma^{n}-\sigma^{\ast})\nabla v\|_{L^2(\Omega)}\rightarrow 0.
\end{equation*}
This and the weak convergence of $\{u^n\}$ in $H^1(\Omega)$ give
\begin{equation*}
    (\sigma^{n}\nabla u^{n},\nabla v)_{L^2(\Omega)}\rightarrow
    (\sigma^{\ast}\nabla u^{\ast}, \nabla v)_{L^2(\Omega)}.
\end{equation*}
Then by \eqref{eqn:medmin-weakconv}, we obtain
\begin{equation*}
   (u^n-U^n_{l},v-V_l)_{L^2(e_l)} \rightarrow (u^\ast-U^\ast_{l},v-V_l)_{L^2(e_l)}.
\end{equation*}
Upon taking into account these relations, we deduce
\begin{equation*}
  a(\sigma^*,(u^*,U^*),(v,V)) = \langle I,V\rangle\quad \forall (v,V)\in\mathbb{H}_\infty,
\end{equation*}
i.e., the desired claim $(u^{\ast},U^\ast)=(u^{\ast}(\sigma^{\ast}),U^{\ast}(\sigma^\ast))\in \mathbb{H}_\infty$.
This and the weak lower semicontinuity of the norm imply that $\sigma^\ast$ is a minimizer of $J_{\infty}(\cdot)$
over $\mathcal{A}_{\infty}$, completing the proof of the theorem.
\end{proof}

The preceding proof together with the uniqueness of the solution to \eqref{eqn:medpoly} and
the standard subsequence argument yields the following weak continuity result.
\begin{lemma}\label{lem:weakcontmedpoly}
Let the sequence $\{\sigma_{k}\}\subset\bigcup_{k\geq 0}\mathcal{A}_{k}$ converge to some $\sigma^\ast
\in\mathcal{A}_{\infty}$ weakly in $H^1(\Omega)$ and let the solution to \eqref{eqn:medpoly} with
$\sigma_\infty=\sigma^\ast$ be $(u(\sigma^{\ast}),U(\sigma^{\ast}))\in\mathbb{H}_{\infty}$.
Then the sequence of solutions $\{(u_{k}(\sigma_{k}),U_{k}(\sigma_{k}))\}\subset
\bigcup_{k\geq 0}\mathbb{H}_{k}$ to \eqref{eqn:dispoly} over $\cT_{k}$ satisfies
\begin{equation*}
    (u_{k}(\sigma_{k}),U_{k}(\sigma_{k})) \rightarrow (u(\sigma^\ast),U(\sigma^*))\quad \mbox{weakly in}~\mathbb{H}.
\end{equation*}
\end{lemma}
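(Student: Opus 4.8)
The plan is to prove Lemma~\ref{lem:weakcontmedpoly} by adapting the argument already carried out in the proof of Theorem~\ref{thm:existmedmin}, the key difference being that here the sequence $\{\sigma_k\}$ is an \emph{arbitrary} weakly convergent sequence rather than a minimizing sequence, and the limit $(u(\sigma^\ast),U(\sigma^\ast))$ is already fixed. First I would extract uniform bounds: since $\{\sigma_k\}$ converges weakly in $H^1(\Omega)$, it is uniformly bounded, and each $\sigma_k$ lies in $\mathcal{A}_k\subset\mathcal{A}$, so $\lambda\le\sigma_k\le\lambda^{-1}$ a.e. Testing \eqref{eqn:dispoly} with $(v_\cT,V)=(u_k,U_k)$ and invoking Lemma~\ref{lem:normequiv} together with the coercivity constant (which is uniform because it depends only on $\lambda$ and the $z_l$) yields a uniform bound $\|(u_k,U_k)\|_{\mathbb{H}}\le c$. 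Hence there is a subsequence, still denoted $\{(u_k,U_k)\}$, converging weakly in $\mathbb{H}$ to some $(\bar u,\bar U)\in\mathbb{H}_\infty$, with $u_k\to\bar u$ strongly in $L^2(\Gamma)$ by the compact trace embedding.

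Next I would identify the weak limit as the solution of the limiting forward problem with coefficient $\sigma^\ast$. The crucial point is to pass to the limit in the trilinear form. Fix any $(v,V)\in\bigcup_{k\ge0}\mathbb{H}_k$, so that $(v,V)\in\mathbb{H}_m$ for some $m$ and hence $(v,V)\in\mathbb{H}_k$ for all $k\ge m$ by nestedness; this is what lets me use $(v,V)$ as a legitimate test function in \eqref{eqn:dispoly} for large $k$. I would then use the same splitting as in Theorem~\ref{thm:existmedmin},
\begin{equation*}
  (\sigma_k\nabla u_k,\nabla v)=((\sigma_k-\sigma^\ast)\nabla u_k,\nabla v)+(\sigma^\ast\nabla u_k,\nabla v).
\end{equation*}
After passing to a further subsequence so that $\sigma_k\to\sigma^\ast$ a.e.\ in $\Omega$, the bound $\lambda\le\sigma_k,\sigma^\ast\le\lambda^{-1}$ makes $|(\sigma_k-\sigma^\ast)\nabla v|^2\le (2\lambda^{-1}|\nabla v|)^2\in L^1(\Omega)$ an integrable dominating function, so Lebesgue's dominated convergence gives $\|(\sigma_k-\sigma^\ast)\nabla v\|_{L^2(\Omega)}\to0$; combined with the uniform $H^1$ bound on $u_k$, the first term vanishes, while the weak $H^1$ convergence $u_k\rightharpoonup\bar u$ handles the second term, yielding $(\sigma_k\nabla u_k,\nabla v)\to(\sigma^\ast\nabla\bar u,\nabla v)$. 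The boundary terms $\sum_l z_l^{-1}(u_k-U_{k,l},v-V_l)_{L^2(e_l)}$ converge by the strong $L^2(\Gamma)$ convergence of $u_k$ together with the (finite-dimensional, hence strong) convergence of $U_k$. Passing to the limit in \eqref{eqn:dispoly} thus gives
\begin{equation*}
  a(\sigma^\ast,(\bar u,\bar U),(v,V))=\langle I,V\rangle\qquad\forall(v,V)\in\bigcup_{k\ge0}\mathbb{H}_k,
\end{equation*}
and by density this holds for all $(v,V)\in\mathbb{H}_\infty$, so $(\bar u,\bar U)$ solves \eqref{eqn:medpoly} with $\sigma_\infty=\sigma^\ast$.

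The proof concludes by uniqueness: Lemma~\ref{lem:normequiv} and Lax--Milgram guarantee \eqref{eqn:medpoly} has a unique solution for the fixed coefficient $\sigma^\ast$, so $(\bar u,\bar U)=(u(\sigma^\ast),U(\sigma^\ast))$. A standard subsequence-of-subsequence argument then upgrades the convergence from the extracted subsequence to the whole sequence: every subsequence of $\{(u_k,U_k)\}$ has a further subsequence converging weakly to the same unique limit, which forces the entire sequence to converge weakly in $\mathbb{H}$ to $(u(\sigma^\ast),U(\sigma^\ast))$. The main obstacle, and the step requiring the most care, is the limit passage in the nonlinear coefficient term $(\sigma_k\nabla u_k,\nabla v)$: since only weak convergence of $u_k$ is available, I cannot pass to the limit in a product of two merely weakly convergent sequences directly, and the entire argument hinges on exploiting the a.e.\ pointwise convergence of $\sigma_k$ (secured by the $H^1$ weak convergence plus compact embedding into $L^2$) to convert the troublesome factor into a \emph{strongly} convergent one via dominated convergence, exactly as flagged in the introduction's discussion of Lemma~\ref{lem:weakcontmedpoly}.
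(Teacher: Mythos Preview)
Your proposal is correct and follows precisely the route the paper indicates: it is the argument of Theorem~\ref{thm:existmedmin} combined with uniqueness of the solution to \eqref{eqn:medpoly} and a subsequence-of-subsequence argument, which is exactly what the paper's one-line proof says. Your explicit handling of the test-function issue (working first with $(v,V)\in\bigcup_k\mathbb{H}_k$ by nestedness and then extending by density to $\mathbb{H}_\infty$) is a necessary detail that the paper leaves implicit, since in Theorem~\ref{thm:existmedmin} the variational problem is posed directly on $\mathbb{H}_\infty$ rather than on $\mathbb{H}_k$.
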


Now we analyze the limiting behavior of the sequence
$\{(\sigma_{k}^{\ast},u_{k}^{\ast},U_{k}^{\ast})\}$ generated by Algorithm \ref{alg_afem_eit}:
It contains a subsequence converging in $H^1(\Omega)\times \mathbb{H}$ to a minimizer of the limiting problem \eqref{eqn:medopt}-\eqref{eqn:medpoly}.
This result will play a crucial role in the convergence analysis in Section \ref{ssec:convergence}.
\begin{theorem}\label{thm:conver_medmin}
Let $\{\mathcal{A}_{k}\times\mathbb{H}_{k}\}$ be a sequence of discrete admissible sets and finite
element spaces generated by Algorithm \ref{alg_afem_eit}. Then the sequence of discrete solutions
$\{(\sigma_{k}^{\ast},u^{\ast}_{k},U_{k}^{\ast})\}$ to problem \eqref{eqn:discopt} has a subsequence
$\{(\sigma_{k_{m}}^{\ast},u_{k_{m}}^{\ast},U_{k_{m}}^\ast)\}$ converging to a minimizer
$(\sigma_{\infty}^{\ast},u_{\infty}^{\ast},U_{\infty}^\ast)$ to problem \eqref{eqn:medopt}-\eqref{eqn:medpoly} in the sense that
\begin{equation*}
  \begin{aligned}
     &\sigma_{k_{m}}^\ast\rightarrow\sigma_{\infty}^\ast\quad\mbox{ in}~H^1(\Omega),
     \quad \sigma_{k_{m}}^\ast\rightarrow\sigma_{\infty}^\ast\quad\mbox{a.e. in}~\Omega,
    \quad  (u_{k_{m}}^\ast,U_{k_m}^*)\rightarrow (u_{\infty}^\ast,U_\infty^*)\quad\mbox{ in}~\mathbb{H}.
  \end{aligned}
\end{equation*}
\end{theorem}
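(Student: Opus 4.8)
The plan is to extract weakly convergent subsequences, identify the weak limit as a minimizer of the limiting problem \eqref{eqn:medopt}-\eqref{eqn:medpoly} via a $\liminf$/$\limsup$ sandwich on the functional values, and finally upgrade weak convergence to strong convergence for both the conductivity and the state. First I would establish the requisite compactness. Since each $\sigma_k^*$ minimizes $J_k$ over $\mathcal{A}_k$ and the nestedness supplies a fixed competitor $\mu_0\in\mathcal{A}_0\subset\mathcal{A}_k$, the bound $\tfrac{\alpha}{2}\|\nabla\sigma_k^*\|_{L^2(\Omega)}^2\le J_k(\sigma_k^*)\le J_k(\mu_0)\le c$ (using \eqref{stab-discpolyadj} to control the fidelity term) together with the box constraint $\lambda\le\sigma_k^*\le\lambda^{-1}$ shows $\{\sigma_k^*\}$ is bounded in $H^1(\Omega)$. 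Hence a subsequence $\{\sigma_{k_m}^*\}$ converges weakly in $H^1(\Omega)$, strongly in $L^2(\Omega)$ and a.e.\ in $\Omega$ to some $\sigma_\infty^*$; since $\mathcal{A}_\infty$ is convex and closed (Lemma \ref{lem:convexclosed}) it is weakly closed, so $\sigma_\infty^*\in\mathcal{A}_\infty$. By the weak continuity of the forward map (Lemma \ref{lem:weakcontmedpoly}) the associated states converge weakly, $(u_{k_m}^*,U_{k_m}^*)\rightharpoonup(u_\infty^*,U_\infty^*)$ in $\mathbb{H}$, where $(u_\infty^*,U_\infty^*)=(u_\infty(\sigma_\infty^*),U_\infty(\sigma_\infty^*))$ solves \eqref{eqn:medpoly}; the finite dimension of $\mathbb{R}_\diamond^L$ makes $U_{k_m}^*\to U_\infty^*$ strong.

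Next I would show $\sigma_\infty^*$ is a minimizer of $J_\infty$ over $\mathcal{A}_\infty$. Weak lower semicontinuity of $\|\nabla\cdot\|_{L^2(\Omega)}$ together with $U_{k_m}^*\to U_\infty^*$ gives $J_\infty(\sigma_\infty^*)\le\liminf_m J_{k_m}(\sigma_{k_m}^*)$. For the matching upper bound, fix any $\mu\in\bigcup_k\mathcal{A}_k$, say $\mu\in\mathcal{A}_{k_0}$; for $k_m\ge k_0$ the nestedness and minimality yield $J_{k_m}(\sigma_{k_m}^*)\le J_{k_m}(\mu)$, while applying Lemma \ref{lem:weakcontmedpoly} to the constant sequence $\sigma_k\equiv\mu$ gives $U_{k_m}(\mu)\to U_\infty(\mu)$, hence $J_{k_m}(\mu)\to J_\infty(\mu)$. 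Thus $\limsup_m J_{k_m}(\sigma_{k_m}^*)\le J_\infty(\mu)$ for every $\mu\in\bigcup_k\mathcal{A}_k$, and the density of $\bigcup_k\mathcal{A}_k$ in $\mathcal{A}_\infty$ with the continuity of $J_\infty$ under $H^1(\Omega)$-convergence extends this to all $\mu\in\mathcal{A}_\infty$. Combining the two bounds shows $\sigma_\infty^*$ minimizes $J_\infty$, and taking $\mu=\sigma_\infty^*$ forces $J_{k_m}(\sigma_{k_m}^*)\to J_\infty(\sigma_\infty^*)$.

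Finally I would promote weak to strong convergence. Subtracting the limiting fidelity value from $J_{k_m}(\sigma_{k_m}^*)\to J_\infty(\sigma_\infty^*)$ yields $\|\nabla\sigma_{k_m}^*\|_{L^2(\Omega)}\to\|\nabla\sigma_\infty^*\|_{L^2(\Omega)}$, which combined with the weak $H^1(\Omega)$-convergence gives $\sigma_{k_m}^*\to\sigma_\infty^*$ strongly in $H^1(\Omega)$. For the state I would argue by coercivity: choose a recovery sequence $(w_m,W_m)\in\mathbb{H}_{k_m}$ with $(w_m,W_m)\to(u_\infty^*,U_\infty^*)$ in $\mathbb{H}$ (possible since $\mathbb{H}_\infty=\overline{\bigcup_k\mathbb{H}_k}$), test \eqref{eqn:dispoly} with the admissible difference $(u_{k_m}^*-w_m,U_{k_m}^*-W_m)\in\mathbb{H}_{k_m}$, and use Lemma \ref{lem:normequiv} to obtain
\begin{equation*}
c\,\|(u_{k_m}^*-w_m,U_{k_m}^*-W_m)\|_{\mathbb{H},*}^2\le\langle I,U_{k_m}^*-W_m\rangle-a(\sigma_{k_m}^*,(w_m,W_m),(u_{k_m}^*-w_m,U_{k_m}^*-W_m)).
\end{equation*}
The right-hand side tends to zero because $(u_{k_m}^*-w_m,U_{k_m}^*-W_m)\rightharpoonup0$ in $\mathbb{H}$, while $\sigma_{k_m}^*\nabla w_m\to\sigma_\infty^*\nabla u_\infty^*$ strongly in $L^2(\Omega)$, the latter being precisely where the pointwise convergence of $\{\sigma_{k_m}^*\}$ and Lebesgue's dominated convergence theorem enter to pass to the limit in $(\sigma_{k_m}^*-\sigma_\infty^*)\nabla u_\infty^*$. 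Hence $(u_{k_m}^*-w_m,U_{k_m}^*-W_m)\to0$, and therefore $(u_{k_m}^*,U_{k_m}^*)\to(u_\infty^*,U_\infty^*)$ strongly in $\mathbb{H}$. I expect this last step---the strong $\mathbb{H}$-convergence of the states, owing to the variable coefficient $\sigma_{k_m}^*$ and the absence of density of $V_k$ in $H^1(\Omega)$---to be the main obstacle, with the weak$\times$strong pairing enabled by dominated convergence being the crux that resolves it.
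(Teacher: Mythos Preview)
Your proposal is correct and follows the same overall architecture as the paper: extract a weakly convergent subsequence by compactness, identify the limit as a minimizer of $J_\infty$ via the $\liminf/\limsup$ sandwich on functional values, deduce $\|\nabla\sigma_{k_m}^*\|_{L^2(\Omega)}\to\|\nabla\sigma_\infty^*\|_{L^2(\Omega)}$ and hence strong $H^1(\Omega)$-convergence of the conductivities, and finally upgrade the state convergence to strong in $\mathbb{H}$.

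The only substantive deviation is in this last step. The paper obtains strong $\mathbb{H}$-convergence of $(u_{k_m}^*,U_{k_m}^*)$ by an energy-convergence argument: from $a(\sigma_{k_m}^*,(u_{k_m}^*,U_{k_m}^*),(u_{k_m}^*,U_{k_m}^*))=\langle I,U_{k_m}^*\rangle\to\langle I,U_\infty^*\rangle=a(\sigma_\infty^*,(u_\infty^*,U_\infty^*),(u_\infty^*,U_\infty^*))$, it expands $\|\sqrt{\sigma_{k_m}^*}\nabla(u_{k_m}^*-u_\infty^*)\|_{L^2(\Omega)}^2$ and drives each term to zero. You instead use a recovery sequence $(w_m,W_m)\in\mathbb{H}_{k_m}$ together with Galerkin orthogonality and coercivity, a C\'ea-type argument. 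Both routes are valid and hinge on the same crux---pointwise convergence of $\sigma_{k_m}^*$ plus dominated convergence to handle $(\sigma_{k_m}^*-\sigma_\infty^*)\nabla u_\infty^*$; your version is slightly more structural, while the paper's avoids constructing the recovery sequence. A minor related point: in your $\limsup$ step you invoke continuity of $J_\infty$ on $\mathcal{A}_\infty$ under $H^1(\Omega)$-convergence to pass from $\bigcup_k\mathcal{A}_k$ to its closure, which is correct but itself requires the weak-continuity argument underlying Lemma~\ref{lem:weakcontmedpoly}; the paper instead builds, for an arbitrary $\sigma\in\mathcal{A}_\infty$, a diagonal sequence $\sigma_k\in\mathcal{A}_k$ with $\sigma_k\to\sigma$ in $H^1(\Omega)$ and compares $J_k(\sigma_k^*)\le J_k(\sigma_k)$ directly.
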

\begin{proof}
Since the function $\sigma\equiv1\in\mathcal{A}_{k}$ for all $k$ and $J_{k}(\sigma^{\ast}_{k})$ attains its minimum
at $\sigma_k^*\in\mathcal{A}_{k}$, the sequence $\{\sigma^\ast_k\}$ is
uniformly bounded in $H^1(\Omega)$. By Sobolev embedding theorem,
there exists a subsequence $\{\sigma^{\ast}_{k_{m}}\}$ and some
$\sigma_\infty^\ast\in\mathcal{A}_{\infty}$ such that
$\sigma^{\ast}_{k_{m}}\rightarrow\sigma^\ast_{\infty}$ weakly in $H^1(\Omega)$,
    $\sigma^{\ast}_{k_{m}}\rightarrow\sigma_{\infty}^\ast$ a.e. in $\Omega$.
By Lemma \ref{lem:weakcontmedpoly}, there exists a subsequence of
$\{(u^\ast_{k_m},U^\ast_{k_m})\}$ such that
\begin{equation*}
  (u^{\ast}_{k_{m}},U^{\ast}_{k_{m}})\rightarrow (u^{\ast}_{\infty}(\sigma_\infty^{\ast}),U^{\ast}_{\infty}(\sigma_\infty^{\ast}))\quad\mbox{weakly in}~\mathbb{H},
\end{equation*}
where $(u^{\ast}_{\infty}(\sigma_{\infty}^{\ast}),U^{\ast}_{\infty}(\sigma_{\infty}^{\ast}))$
solves \eqref{eqn:medpoly} with $\sigma_{\infty}=\sigma_{\infty}^{\ast}$. We claim that
$\sigma_\infty^{\ast}$ is a minimizer to $J_{\infty}$ over $\mathcal{A}_{\infty}$. For
any $\sigma\in\mathcal{A}_{\infty}$, the definition of $\mathcal{A}_{\infty}$ ensures the existence
of a sequence $\{\sigma_k\}\subset\bigcup_{k\geq 0}\mathcal{A}_{k}$ such that $\sigma_{k}\rightarrow
\sigma$ in $H^1(\Omega)$. By Lemma \ref{lem:weakcontmedpoly}, the sequence of solutions $(u_k(\sigma_k),U_k(\sigma_k))$ to problem
\eqref{eqn:dispoly} over $\cT_k$ satisfies
\begin{equation*}
  (u_k(\sigma_k),U_k(\sigma_k)) \rightarrow (u_{\infty}(\sigma),U_{\infty}(\sigma))\quad\mbox{weakly in}~\mathbb{H}.
\end{equation*}
By the minimizing property of $\sigma_{k}^{\ast}$ to the functional $J_{k}$ over $\mathcal{A}_{k}$,
there holds $J_{k}(\sigma^\ast_{k})\leq J_k(\sigma_k).$ Consequently,
\begin{equation*}
   J_{\infty}(\sigma^{\ast}_{\infty})\leq\liminf_{m\rightarrow\infty}J_{k_{m}}(\sigma_{k_{m}}^\ast)
        \leq\limsup_{k\rightarrow\infty}J_{k}(\sigma^\ast_k)\leq\limsup_{k\rightarrow\infty}J_k(\sigma_k)
        =J_{\infty}(\sigma)\quad\forall\sigma\in\mathcal{A}_{\infty}.
\end{equation*}
Further, by taking $\sigma=\sigma_{\infty}^{\ast}$, we derive $\lim_{m\rightarrow\infty}J_{k_{m}}
(\sigma^\ast_{k_m})=J_{\infty}(\sigma^\ast_{\infty})$, and thus
$
\lim_{m\rightarrow\infty}\|\nabla\sigma_{k_m}^\ast\|^2_{L^{2}(\Omega)}
=\|\nabla\sigma_\infty^\ast\|^2_{L^{2}(\Omega)}.
$
This and the weak convergence of $\sigma_{k_m}^*$ in $H^1(\Omega)$ shows the first assertion.
It remains to show the convergence of $\{u_{k_m}^\ast\}$ in $H^1(\Omega)$, which
follows directly from the identity $\|\nabla(u_{k_m}^\ast-u_\infty^\ast)
\|_{L^2(\Omega)}\to0$. Using the discrete problem \eqref{eqn:dispoly} over $\cT_{k_m}$, the convergence
of $\{U^\ast_{k_m}\}$ and the limiting problem \eqref{eqn:medpoly} imply
\begin{equation*}
    a(\sigma_{k_m},(u^*_{k_m},U^*_{k_m}),(u^*_{k_m},U^*_{k_m})) = \langle I,U^*_{k_m}\rangle\to  \langle I,U^*_\infty\rangle =
    a(\sigma_\infty,(u^*_\infty,U^*_\infty),(u^*_\infty,U^*_\infty)),
\end{equation*}
By the compact embedding from {the trace $H^{1/2}(\Gamma)$ of $H^1(\Omega)$} into $L^2(\Gamma)$,
the sequence $\{u^\ast_{k_m}\}$ converges to $u^\ast_{\infty}$ in $L^2(\Gamma)$,
and the convergence of $\{U^\ast_{k_m}\}$ yield
$
(\sigma_{k_{m}}^\ast\nabla u^\ast_{k_m},\nabla u^\ast_{k_m})\rightarrow
(\sigma_{\infty}^\ast\nabla u^\ast_{\infty},\nabla u^\ast_\infty).
$
By the identity
\begin{equation*}
  \begin{aligned}
    \|\sqrt{\sigma_{k_m}^\ast}\nabla(u_{k_m}^\ast-u_{\infty}^\ast)\|_{L^2(\Omega)}^2
    &=\|\sqrt{\sigma_{k_m}^\ast}\nabla u_{k_m}^\ast \|_{L^2(\Omega)}^2-2(\sigma_{k_m}^\ast\nabla u_{k_m}^\ast,\nabla u_{\infty}^\ast )+\|\sqrt{\sigma_{k_m}^\ast}\nabla u_{\infty}^\ast\|_{L^2(\Omega)}^2
  \end{aligned}
\end{equation*}
and the triangle inequality, we deduce
\begin{equation*}
  \begin{aligned}
   \|\nabla(u_{k_m}^\ast-u_\infty^*)\|_{L^2(\Omega)}^2 & \leq c(|(\sigma_{k_{m}}^\ast\nabla u^\ast_{k_m},\nabla u^\ast_{k_m})-
    (\sigma_{\infty}^\ast\nabla u^\ast_{\infty},\nabla u^\ast_\infty)| + |(\sigma_{k_m}^*-\sigma_\infty^\ast,|\nabla u_\infty^*|^2)| \\
     &\quad +|(\sigma_{k_m}^\ast\nabla u_{k_m}^\ast-\sigma_\infty^\ast\nabla u_\infty^\ast,\nabla u_\infty^\ast)|) := \mathrm{I} + \mathrm{II} + \mathrm{III}
  \end{aligned}
\end{equation*}
The second term $\textrm{II}$ tends to zero by the pointwise convergence of the sequence $\{\sigma^{\ast}_{k_{m}}\}$ and
Lebesgue's dominated convergence theorem \cite[pp. 20]{Evans:1992a}. For the third term $\textrm{III}$, there holds
\begin{equation*}
  \begin{aligned}
  \mathrm{III} & \leq |((\sigma_{k_m}^\ast-\sigma_\infty^\ast)\nabla u_{k_m}^\ast,\nabla u_\infty^\ast)|
    + |(\sigma_\infty^\ast\nabla(u_{k_m}^*-u_\infty^\ast),\nabla u_\infty^\ast)|\\
      & \leq \|\nabla u_{k_m}^\ast\|_{L^2(\Omega)}\|(\sigma_{k_m}^\ast-\sigma_\infty^\ast)\nabla u_\infty^\ast\|_{L^2(\Omega)}+
      |(\sigma_\infty^\ast\nabla(u_{k_m}^*-u_\infty^\ast),\nabla u_\infty^\ast)| \to 0
  \end{aligned}
\end{equation*}
by the weak convergence of $\{u_{k_m}^\ast\}$ in ${H}^1(\Omega)$ and the pointwise convergence of $\{\sigma_{k_m}^\ast\}$.
The preceding three estimates together complete the proof of the theorem.
\end{proof}

Next we turn to the optimality system of problem \eqref{eqn:medopt}. Like in the
continuous case, the optimality condition for the minimizer $(\sigma_{\infty}^\ast,u^\ast_{\infty},U^\ast_{\infty})$ and the
adjoint solution $(p^\ast_{\infty},P^\ast_{\infty})\in\mathbb{H}_\infty$ is given by
\begin{equation}\label{eqn:cem-medoptsys}
   \begin{aligned}
     a(\sigma^\ast_\infty,(u^*_\infty,U^*_\infty),(v,V)) = \langle I,V\rangle \quad \forall (v,V)\in \mathbb{H}_\infty,\\
     a(\sigma_\infty^*,(p^*_\infty,P_\infty^*),(v,V)) = \langle U_\infty^*-U^\delta,V\rangle\quad \forall (v,V)\in\mathbb{H}_\infty,\\
     \alpha(\nabla\sigma_{\infty}^{\ast},\nabla(\mu-\sigma_{\infty}^{\ast}))-(\nabla u_{\infty}^{\ast},\nabla p_{\infty}^{\ast}(\mu-\sigma_{\infty}^{\ast}))\geq 0\quad\forall \mu\in\mathcal{A}_{\infty}.
    \end{aligned}
\end{equation}

The next result shows the convergence of the sequence of adjoint solutions.
\begin{theorem}\label{thm:conver_medadj}
Under the condition of Theorem \ref{thm:conver_medmin}, the subsequence of
adjoint solutions $\{(p_{k_{m}}^\ast,P_{k_{m}}^\ast)\}$
generated by Algorithm \ref{alg_afem_eit} converges to the solution $(p_{\infty}^\ast,
P_{\infty}^\ast)$ to the limiting adjoint problem in \eqref{eqn:cem-medoptsys}:
\begin{equation*}
   \lim_{m\rightarrow\infty}\|(p_{k_m}^\ast-p_{\infty}^\ast,P_{k_m}^\ast-P_{\infty}^\ast)\|_{\mathbb{H},\ast}=0.
\end{equation*}
\end{theorem}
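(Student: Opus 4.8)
The plan is to transplant the three–stage argument used for the state variables in Theorem \ref{thm:conver_medmin} to the adjoint equation; the only structural novelty is that the right-hand side of the adjoint problem carries the data term $U_{k_m}^\ast-U^\delta$ in place of the fixed current $I$. Throughout I work along the subsequence $\{k_m\}$ furnished by Theorem \ref{thm:conver_medmin}, so that $\sigma_{k_m}^\ast\to\sigma_\infty^\ast$ in $H^1(\Omega)$ and a.e.\ in $\Omega$, while $(u_{k_m}^\ast,U_{k_m}^\ast)\to(u_\infty^\ast,U_\infty^\ast)$ in $\mathbb{H}$ (in particular $U_{k_m}^\ast\to U_\infty^\ast$ strongly). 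First I would establish weak precompactness: testing the second line of \eqref{eqn:cem-discoptsys} with $(p_{k_m}^\ast,P_{k_m}^\ast)$ and invoking the stability bound \eqref{stab-discpolyadj} shows $\{(p_{k_m}^\ast,P_{k_m}^\ast)\}$ is uniformly bounded in $\mathbb{H}$. Since $\mathbb{H}_\infty$ is a closed (hence weakly closed) subspace of $\mathbb{H}$ and each $(p_{k_m}^\ast,P_{k_m}^\ast)\in\mathbb{H}_{k_m}\subset\mathbb{H}_\infty$, a further subsequence converges weakly in $\mathbb{H}$ to some $(\hat p,\hat P)\in\mathbb{H}_\infty$, with $p_{k_m}^\ast\to\hat p$ in $L^2(\Gamma)$ by the compact trace embedding.

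Next I would identify this weak limit. Fix $(v,V)\in\mathbb{H}_{k_0}$ for some $k_0$; by nestedness it is admissible in the discrete adjoint equation for all $k_m\ge k_0$, so
\begin{equation*}
  a(\sigma_{k_m}^\ast,(p_{k_m}^\ast,P_{k_m}^\ast),(v,V))=\langle U_{k_m}^\ast-U^\delta,V\rangle.
\end{equation*}
Passing to the limit is the weak-continuity computation of Theorem \ref{thm:existmedmin}: splitting $(\sigma_{k_m}^\ast\nabla p_{k_m}^\ast,\nabla v)=((\sigma_{k_m}^\ast-\sigma_\infty^\ast)\nabla p_{k_m}^\ast,\nabla v)+(\sigma_\infty^\ast\nabla p_{k_m}^\ast,\nabla v)$, the first term vanishes by the uniform $H^1$ bound together with the pointwise convergence of $\sigma_{k_m}^\ast$ and Lebesgue's dominated convergence theorem, while the second converges by the weak $H^1$ convergence of $p_{k_m}^\ast$; the electrode terms pass to the limit using the $L^2(\Gamma)$ trace convergence and $P_{k_m}^\ast\to\hat P$ in $\mathbb{R}_\diamond^L$. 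Combined with $U_{k_m}^\ast\to U_\infty^\ast$ this yields $a(\sigma_\infty^\ast,(\hat p,\hat P),(v,V))=\langle U_\infty^\ast-U^\delta,V\rangle$ for every $(v,V)\in\bigcup_k\mathbb{H}_k$, and then, by the density defining $\mathbb{H}_\infty$, for all $(v,V)\in\mathbb{H}_\infty$. Uniqueness of the solution to the limiting adjoint problem in \eqref{eqn:cem-medoptsys} forces $(\hat p,\hat P)=(p_\infty^\ast,P_\infty^\ast)$, and the standard subsequence argument promotes this to weak convergence of the entire sequence.

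Finally I would upgrade weak to strong convergence in the $\|\cdot\|_{\mathbb{H},\ast}$-norm by a Radon--Riesz-type energy argument. Choosing $(v,V)=(p_{k_m}^\ast,P_{k_m}^\ast)$ in the discrete adjoint equation gives the energy identity
\begin{equation*}
  a(\sigma_{k_m}^\ast,(p_{k_m}^\ast,P_{k_m}^\ast),(p_{k_m}^\ast,P_{k_m}^\ast))=\langle U_{k_m}^\ast-U^\delta,P_{k_m}^\ast\rangle\longrightarrow\langle U_\infty^\ast-U^\delta,P_\infty^\ast\rangle,
\end{equation*}
whose limit equals $a(\sigma_\infty^\ast,(p_\infty^\ast,P_\infty^\ast),(p_\infty^\ast,P_\infty^\ast))$ by the limiting adjoint equation tested with $(p_\infty^\ast,P_\infty^\ast)$. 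Expanding the symmetric form $a(\sigma_{k_m}^\ast,\cdot,\cdot)$ on the difference, the cross term $a(\sigma_{k_m}^\ast,(p_{k_m}^\ast,P_{k_m}^\ast),(p_\infty^\ast,P_\infty^\ast))$ and the term $a(\sigma_{k_m}^\ast,(p_\infty^\ast,P_\infty^\ast),(p_\infty^\ast,P_\infty^\ast))$ both converge to $a(\sigma_\infty^\ast,(p_\infty^\ast,P_\infty^\ast),(p_\infty^\ast,P_\infty^\ast))$ by the same splitting, weak-convergence and dominated-convergence arguments. Hence
\begin{equation*}
  a(\sigma_{k_m}^\ast,(p_{k_m}^\ast-p_\infty^\ast,P_{k_m}^\ast-P_\infty^\ast),(p_{k_m}^\ast-p_\infty^\ast,P_{k_m}^\ast-P_\infty^\ast))\longrightarrow 0,
\end{equation*}
and the uniform coercivity of $a(\sigma,\cdot,\cdot)$ over $\sigma\in\mathcal{A}$ (with constant $\min(\lambda,\min_l z_l^{-1})$, independent of $m$) together with Lemma \ref{lem:normequiv} converts this into the asserted $\|(p_{k_m}^\ast-p_\infty^\ast,P_{k_m}^\ast-P_\infty^\ast)\|_{\mathbb{H},\ast}\to0$.

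I expect the main obstacle to be precisely this last step: weak $H^1$ convergence gives no control of the gradient in norm, so one must extract the exact energy limit from the data pairing $\langle U_{k_m}^\ast-U^\delta,P_{k_m}^\ast\rangle$ — which is legitimate only because $U_{k_m}^\ast$ converges \emph{strongly} by Theorem \ref{thm:conver_medmin} — and verify convergence of the $\sigma$-weighted cross term, where the pointwise convergence of $\{\sigma_{k_m}^\ast\}$ and dominated convergence are indispensable.
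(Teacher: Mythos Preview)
Your argument is correct, but it proceeds along a genuinely different route from the paper. The paper introduces an auxiliary Galerkin approximation $(\widetilde{p}_{k_m},\widetilde{P}_{k_m})\in\mathbb{H}_{k_m}$ of the limiting adjoint problem (i.e.\ with the fixed coefficient $\sigma_\infty^\ast$ and the fixed right-hand side $U_\infty^\ast-U^\delta$), invokes C\'ea's lemma together with the density of $\bigcup_k\mathbb{H}_k$ in $\mathbb{H}_\infty$ to obtain $\|(\widetilde{p}_{k_m}-p_\infty^\ast,\widetilde{P}_{k_m}-P_\infty^\ast)\|_{\mathbb{H},\ast}\to0$, and then bounds $\|(\widetilde{p}_{k_m}-p_{k_m}^\ast,\widetilde{P}_{k_m}-P_{k_m}^\ast)\|_{\mathbb{H},\ast}$ directly by coercivity, splitting the residual into three pieces controlled respectively by $\|U_{k_m}^\ast-U_\infty^\ast\|$, $\|\nabla(\widetilde{p}_{k_m}-p_\infty^\ast)\|_{L^2(\Omega)}$, and $\|(\sigma_{k_m}^\ast-\sigma_\infty^\ast)\nabla p_\infty^\ast\|_{L^2(\Omega)}$. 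A triangle inequality finishes. Your approach instead mirrors the three-stage pattern used for the state in Theorem~\ref{thm:conver_medmin}: weak compactness, identification of the weak limit via the discrete equation on the nested spaces, and a Radon--Riesz energy upgrade. The paper's route is shorter and avoids establishing weak convergence altogether, exploiting that once the coefficient and data are frozen the problem is linear and standard Galerkin theory applies; your route is slightly longer but has the virtue of reusing verbatim the machinery already set up for the state, so the two proofs become entirely parallel.
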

\begin{proof}
The discrete version of the limiting adjoint problem \eqref{eqn:cem-medoptsys} reads:
find $(\widetilde{p}_{k_m},\widetilde{P}_{k_m})\in\mathbb{H}_{k_m}$ such that
\begin{equation}\label{eqn:conver_medadj_bilin}
    a(\sigma_\infty^\ast,(\widetilde{p}_{k_m},\widetilde{P}_{k_m}),(v,V)) = \langle U_\infty^\ast-U^\delta,V\rangle\quad \forall (v,V)\in \mathbb{H}_{k_m}.
\end{equation}
By Cea's lemma and the construction of the space $\mathbb{H}_{\infty}$, we deduce
    \begin{equation}\label{eqn:conv_medadj_cea}
        \|(p^\ast_{\infty}-\widetilde{p}_{k_m},P^\ast_{\infty}-\widetilde{P}_{k_m})\|_{\mathbb{H},\ast}\leq c\inf_{(v,V)\in\mathbb{H}_{k_{m}}}\|(p^\ast_{\infty}-v,P^\ast_{\infty}-V)\|_{\mathbb{H},\ast}
        \rightarrow 0.
    \end{equation}
By taking
$(v_{k_m},V_{k_m})=(\widetilde{p}_{k_{m}}-p^\ast_{k_{m}},\widetilde{P}_{k_m}-P^\ast_{k_{m}})$ in the second equation of \eqref{eqn:cem-discoptsys} and  $(v,V)=(\widetilde{p}_{k_{m}}-p^\ast_{k_{m}},\widetilde{P}_{k_{m}}-P^\ast_{k_{m}})$ in \eqref{eqn:conver_medadj_bilin}, we obtain
\begin{align*}
      \|\sqrt{\sigma_{k_m}}\nabla(\widetilde{p}_{k_m}&-p_{k_m}^\ast)\|_{L^2(\Omega)}^2+
        \sum_{l=1}^Lz_l^{-1}\|\widetilde{p}_{k_m}-p^\ast_{k_m}-\widetilde{P}_{k_m,l}+P^\ast_{k_m,l}\|_{L^2(e_l)}^2\\
     = &  \langle U^\ast_{\infty}-U^\ast_{k_m},\widetilde{P}_{k_m}-P^\ast_{k_m}\rangle + ((\sigma_{k_m}^\ast-\sigma^\ast_{\infty})\nabla(\widetilde{p}_{k_m}-p^\ast_{\infty}),\nabla(\widetilde{p}_{k_m}-p_{k_m}^\ast))\\
      & +((\sigma_{k_m}^\ast-\sigma^\ast_{\infty})\nabla p_{\infty}^\ast,\nabla(\widetilde{p}_{k_m}-p_{k_m}^\ast)) := \textrm{I} + \textrm{II} + \textrm{III}.
\end{align*}
The Cauchy-Schwarz inequality and the box constraints on $\sigma_{k_m}^\ast$ and $\sigma_\infty^\ast$ give
\begin{equation*}
\begin{aligned}
   |\textrm{I}|&\leq \|U^\ast_{\infty}-U^\ast_{k_m}\|_{\mathbb{R}^L}\|\widetilde{P}_{k_m}-P^\ast_{k_m}\|_{\mathbb{R}^L},\\
   |\textrm{II}| &\leq c\|\nabla(\widetilde{p}_{k_m}-p^\ast_{\infty})\|_{L^2(\Omega)}\|\nabla(\widetilde{p}_{k_m}-p_{k_m}^\ast)\|_{L^2(\Omega)},\\
   |\mathrm{III}|& \leq\|(\sigma^{\ast}_{k_m}-\sigma^{\ast}_{\infty})\nabla p^{\ast}_{\infty}\|_{L^2(\Omega)}\|\nabla(\widetilde{p}_{k_m}-p^{\ast}_{k_m})\|_{L^{2}(\Omega)},\\
\end{aligned}
\end{equation*}
which, together with Lemma \ref{lem:normequiv}, implies
\[
    \|(\widetilde{p}_{k_m}-p_{k_m}^\ast,\widetilde{P}_{k_m}-P_{k_m}^\ast)\|_{\mathbb{H},\ast}\leq
    c(\|U^\ast_{\infty}-U^\ast_{k_m}\|_{\mathbb{R}^L}+\|\nabla(\widetilde{p}_{k_m}-p^\ast_{\infty})\|_{L^2(\Omega)}+\|(\sigma^{\ast}_{k_m}-\sigma^{\ast}_{\infty})\nabla p^{\ast}_{\infty}\|_{L^2(\Omega)}).
\]
Thanks to the convergence of $\{U_{k_m}^\ast\}$, the pointwise convergence of $\{\sigma_{k_m}^\ast\}$ in
Theorem \ref{thm:conver_medmin} and \eqref{eqn:conv_medadj_cea}, the right-hand side tends to zero. Now
the desired assertion follows from the triangle inequality and \eqref{eqn:conv_medadj_cea}.
\end{proof}

\subsection{Convergence of AFEM}\label{ssec:convergence}
Now we establish the main theoretical result of this work: the sequence of discrete solutions generated by Algorithm
\ref{alg_afem_eit} contains a convergent subsequence $\{(\sigma_{k_m}^\ast,u_{k_m}^\ast,U_{k_m}^\ast,p_{k_m}^\ast,
P_{k_m}^\ast)\}$, and the limit satisfies the optimality system \eqref{eqn:cem-optsys}. By Theorems
\ref{thm:conver_medmin} and \ref{thm:conver_medadj}, it suffices to show that the limit $\{(\sigma_{\infty}^\ast,
u_{\infty}^\ast,U_{\infty}^\ast,p_{\infty}^\ast,P_{\infty}^\ast)\}$ solves
\eqref{eqn:cem-optsys}. Our arguments begin with the observation that the maximal error indicator over marked
elements has a vanishing limit, cf. Lemma \ref{lem:estmarked}. Then we show that the sequences of
residuals with respect to $(u_{k_m}^\ast,U_{k_m}^\ast)$ and $(p_{k_m}^\ast,P_{k_m}^\ast)$ converge to zero weakly in Lemma
\ref{lem:res_weakconvzero}. This and Theorems \ref{thm:conver_medmin} and \ref{thm:conver_medadj}
verify the first two lines in \eqref{eqn:cem-optsys} in Lemma \ref{lem:vp_mc}, and the variational inequality in Lemma \ref{lem:gat_mc}.

First we show that the maximal error indicator over the marked elements has a vanishing limit.
\begin{lemma}\label{lem:estmarked}
Let $\{\cT_k,\mathcal{A}_{k}\times\mathbb{H}_{k},(\sigma^{\ast}_k,u^{\ast}_k,U^{\ast}_k,p^{\ast}_k,P^{\ast}_k)\}$
be the sequence of meshes, discrete admissible sets, finite element spaces and discrete solutions generated by
Algorithm \ref{alg_afem_eit} and $\mathcal{M}_{k}$ the set of marked elements by \eqref{eqn:marking}. Then for each
convergent subsequence $\{(\sigma^{\ast}_{k_m},u^{\ast}_{k_m},U^{\ast}_{k_m},p^{\ast}_{k_m},P^{\ast}_{k_m})\}$, there holds
\begin{equation*}
   \lim_{m\rightarrow\infty}\max_{T\in\mathcal{M}_{k_m}}\eta_{k_{m}}(\sigma^{\ast}_{k_m},u^{\ast}_{k_m},U^{\ast}_{k_m},p^{\ast}_{k_m},P^{\ast}_{k_m},T)=0.
\end{equation*}
\end{lemma}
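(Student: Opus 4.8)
The plan is to show that the largest error indicator among the marked elements vanishes in the limit along the convergent subsequence. Since the marked set $\mathcal{M}_{k_m}$ consists of elements flagged for refinement, every $T\in\mathcal{M}_{k_m}$ belongs to $\cT_{k_m}^0$ (it is refined at least once after iteration $k_m$), so its associated patch $D_T$ lies in $\Omega_{k_m}^0$. The key leverage is therefore Lemma \ref{lem:conv_zero_mesh}, which gives $\|h_{k_m}\chi^0_{k_m}\|_{L^\infty(\Omega)}\to 0$, combined with the preliminary bounds on the three indicator components from Lemma \ref{lem:stab-indicator}. First I would write $\eta_{k_m}^2(\cdots,T)=\eta_{k_m,1}^2+\eta_{k_m,2}^2+\eta_{k_m,3}^2$ and bound each piece separately, extracting an explicit power of the local mesh size $h_T$ (and $h_F$) that carries the decay, while controlling the remaining factors by the uniform stability bound \eqref{stab-discpolyadj} and the convergence established in Theorems \ref{thm:conver_medmin} and \ref{thm:conver_medadj}.

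**Treatment of the three components.**
For $\eta_{k_m,3}$, Lemma \ref{lem:stab-indicator} gives
$\eta_{k_m,3}^2 \le c\bigl(h_T^{4-d}\|\nabla u^\ast_{k_m}\|^2_{L^2(T)}\|\nabla p^\ast_{k_m}\|^2_{L^2(T)}+h_T^2\|\nabla\sigma^\ast_{k_m}\|^2_{L^2(D_T)}\bigr)$.
Since $d\le 3$ we have $4-d\ge 1>0$, so both terms carry a strictly positive power of $h_T$; as $T\in\cT^0_{k_m}$, that power is bounded by $\|h_{k_m}\chi^0_{k_m}\|_{L^\infty(\Omega)}^{\,4-d}$ (resp.\ squared), which tends to zero by Lemma \ref{lem:conv_zero_mesh}. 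The gradient factors are uniformly bounded: $\|\nabla\sigma^\ast_{k_m}\|_{L^2(\Omega)}$ is bounded by the minimizing property exploited in Theorem \ref{thm:conver_medmin}, and $\|\nabla u^\ast_{k_m}\|_{L^2(\Omega)}$, $\|\nabla p^\ast_{k_m}\|_{L^2(\Omega)}$ are bounded via \eqref{stab-discpolyadj}. For $\eta_{k_m,1}$ the bound reads
$\eta_{k_m,1}^2\le c(\|\nabla u^\ast_{k_m}\|^2_{L^2(D_T)}+h_F\|u^\ast_{k_m}-U^\ast_{k_m,l}\|^2_{L^2(F\cap e_l)})$,
and similarly for $\eta_{k_m,2}$ with $(p^\ast_{k_m},P^\ast_{k_m})$. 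Here the first summand carries \emph{no} explicit mesh-size power, so the naive pointwise bound is insufficient; this is where the argument must be sharper.

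**The main obstacle: the volume terms in $\eta_{k_m,1}$ and $\eta_{k_m,2}$.**
The hard part will be the terms $\|\nabla u^\ast_{k_m}\|^2_{L^2(D_T)}$ and $\|\nabla p^\ast_{k_m}\|^2_{L^2(D_T)}$, which do not come with a vanishing mesh factor. The remedy is to exploit the \emph{strong} $H^1(\Omega)$ convergence of the states and adjoints: by Theorem \ref{thm:conver_medmin}, $u^\ast_{k_m}\to u^\ast_\infty$ in $H^1(\Omega)$, and by Theorem \ref{thm:conver_medadj}, $p^\ast_{k_m}\to p^\ast_\infty$ in the $\mathbb{H},\ast$-norm, hence $\nabla p^\ast_{k_m}\to\nabla p^\ast_\infty$ in $L^2(\Omega)$. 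I would therefore split $\nabla u^\ast_{k_m}=(\nabla u^\ast_{k_m}-\nabla u^\ast_\infty)+\nabla u^\ast_\infty$ on $D_T$: the first part is uniformly small in $L^2(\Omega)$ for $m$ large by strong convergence, while for the second, the limit $\nabla u^\ast_\infty\in L^2(\Omega)$ gives absolute continuity of the integral $A\mapsto\int_A|\nabla u^\ast_\infty|^2$, so $\int_{D_T}|\nabla u^\ast_\infty|^2$ is small whenever the Lebesgue measure $|D_T|$ is small. Since $T\in\cT^0_{k_m}$ implies $|D_T|\le c\,h_T^d\le c\,\|h_{k_m}\chi^0_{k_m}\|_{L^\infty(\Omega)}^d\to 0$ by uniform shape regularity and Lemma \ref{lem:conv_zero_mesh}, the measure of the patch of any marked element vanishes uniformly, forcing the volume term to zero. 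The boundary summands carry the factor $h_F\to 0$ and are controlled by the uniform trace bound from \eqref{stab-discpolyadj} via Lemma \ref{lem:normequiv}. Taking the maximum over $T\in\mathcal{M}_{k_m}$ and passing to the limit then yields the claim.
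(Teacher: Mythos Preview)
Your proposal is correct and follows essentially the same route as the paper: both arguments note that marked elements lie in $\cT^0_{k_m}$ so that $|D_T|\to 0$ by Lemma~\ref{lem:conv_zero_mesh}, invoke the stability bounds of Lemma~\ref{lem:stab-indicator}, handle the volume terms $\|\nabla u^\ast_{k_m}\|_{L^2(D_T)}^2$ and $\|\nabla p^\ast_{k_m}\|_{L^2(D_T)}^2$ by splitting off the limit and using absolute continuity of the Lebesgue integral together with the strong $H^1$ convergence from Theorems~\ref{thm:conver_medmin}--\ref{thm:conver_medadj}, and dispose of $\eta_{k_m,3}$ via the explicit mesh-size powers. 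The only cosmetic difference is your treatment of the electrode boundary term: you use $h_F\to 0$ multiplied by the uniform bound on $\|u^\ast_{k_m}-U^\ast_{k_m,l}\|_{L^2(e_l)}$ from \eqref{stab-discpolyadj}, whereas the paper instead splits this term against $(u^\ast_\infty,U^\ast_\infty)$ and invokes absolute continuity of $\|\cdot\|_{L^2(\Gamma)}$ with respect to surface measure (using $|\partial\widetilde{T}\cap e_l|\to 0$); your version is marginally simpler and equally valid.
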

\begin{proof}
We denote by $\widetilde{T}$ the element with the largest error indicator in $\mathcal{M}_{k_{m}}$. Since the set
$D_{\widetilde{T}}\subset\Omega_{k_m}^0$, it follows from Lemma \ref{lem:conv_zero_mesh} that
\begin{equation}\label{eqn:estmarked}
   |D_{\widetilde{T}}|\leq c\|h_{k_{m}}\|^d_{L^{\infty}(\Omega_{k_m}^0)}\rightarrow 0,\quad|\partial\widetilde{T}\cap e_l|\leq c\|h_{k_{m}}\|^{d-1}_{L^{\infty}(\Omega_{k_m}^0)}\rightarrow 0\quad\mbox{as}~m\rightarrow\infty.
\end{equation}
By Lemma \ref{lem:stab-indicator}, the local quasi-uniformity of $\cT_{k_m}$, inverse estimates, trace theorem
\cite[pp. 133]{Evans:1992a} and the triangle inequality, we have
\begin{equation*}
   \begin{aligned}
        \eta_{k_m,1}^2(\sigma^{\ast}_{k_m},u^\ast_{k_m},U^\ast_{k_m},\widetilde{T})
        &\leq c(\|\nabla u^\ast_{k_m}\|_{L^2(D_{\widetilde{T}})}^2+h_{\widetilde{T}}\|u_{k_m,l}^\ast-U_{k_m,l}^\ast\|_{L^2(\partial \widetilde{T}\cap e_l)}^2)\\
        &\leq c(\|(u^\ast_{k_m}-u^\ast_{\infty},U^\ast_{k_m}-U_{\infty}^\ast)\|_{\mathbb{H},\ast}^2+\|\nabla u^\ast_{\infty}\|_{L^2(D_{\widetilde{T}})}^2+\|u^\ast_{\infty,l}-U_{\infty,l}^\ast\|_{L^2(\partial \widetilde{T}\cap e_l)}^2),\\
        \eta_{k_m,2}^2(\sigma^{\ast}_{k_m},p^\ast_{k_m},P^\ast_{k_m},\widetilde{T})
        &\leq c(\|\nabla p^\ast_{k_m}\|_{L^2(D_{\widetilde{T}})}^2+h_{\widetilde{T}}\|p_{k_m,l}^\ast-P_{k_m,l}^\ast\|_{L^2(\partial \widetilde{T}\cap e_l)}^2)\\
        &\leq
        c(\|(p^\ast_{k_m}-p^\ast_{\infty},P^\ast_{k_m}-P_{\infty}^\ast)\|_{\mathbb{H},\ast}^2+\|\nabla p^\ast_{\infty}\|_{L^2(D_{\widetilde{T}})}^2+\|p^\ast_{\infty,l}-P_{\infty,l}^\ast\|_{L^2(\partial \widetilde{T}\cap e_l)}^2),\\
   \end{aligned}
\end{equation*}
\begin{equation*}
   \begin{aligned}
        \eta_{k_m,3}^2(\sigma^{\ast}_{k_m},u^\ast_{k_m},p^\ast_{k_m},\widetilde{T})
        &\leq c(h_{\widetilde{T}}^{4-d}\|\nabla
        u^\ast_{k_m}\|_{L^2(\widetilde{T})}^2\|\nabla
        p^\ast_{k_m}\|_{L^2(\widetilde{T})}^2+h^2_{\widetilde{T}}\|\nabla\sigma_{k_m}^\ast\|^2_{L^2(D_{\widetilde{T}})})\\
        &\leq c|D_{\widetilde{T}}|^{4/d-1}\big((\|\nabla (
        u^\ast_{k_m}-u^\ast_{\infty})\|_{L^2(\widetilde{T})}^2+\|\nabla
        u^\ast_{\infty}\|_{L^2(\widetilde{T})}^2)(\|\nabla
        (p^\ast_{k_m}-p^\ast_{\infty})\|_{L^2(\widetilde{T})}^2+\|\nabla p^\ast_{\infty}\|_{L^2(\widetilde{T})}^2)\\
        &\qquad+(\|\nabla(\sigma_{k_m}^\ast-\sigma_{\infty}^\ast)\|^2_{L^2(D_{\widetilde{T}})}
        +\|\nabla\sigma_{\infty}^\ast\|^2_{L^2(D_{\widetilde{T}})})\big).
     \end{aligned}
\end{equation*}
The desired result follows from Theorems \ref{thm:conver_medmin} and \ref{thm:conver_medadj},
\eqref{eqn:estmarked}, and the absolute continuity of the norms $\|\cdot\|_{L^2
(\Omega)}$ and $\|\cdot\|_{L^2(\Gamma)}$ with respect to the Lebesgue measure.
\end{proof}

Now we define two residuals with respect to $(u^\ast_k,U^\ast_k)$ and $(p^\ast_k,P^\ast_k)$ as
\begin{equation*}
\begin{aligned}
    \langle\mathcal{R}(u^\ast_k,U^\ast_k),(v,V)\rangle&:=a(\sigma_k^*,(u_k^*,U_k^\ast),(v,V))-\langle I,V\rangle \quad \forall (v,V)\in\mathbb{H},\\
    \langle\mathcal{R}(p^\ast_k,P^\ast_k),(v,V)\rangle&:=a(\sigma_k^*,(p_k^\ast,P_k^\ast),(v,V))-\langle U_k^\ast-U^\delta,V\rangle\quad \forall (v,V)\in\mathbb{H}.
\end{aligned}
\end{equation*}
By definition, we have the following Galerkin orthogonality
\begin{equation}\label{eqn:galortho}
   \begin{aligned}
    &\langle\mathcal{R}(p^\ast_k,P^\ast_k),(v,V)\rangle=0\quad\forall(v,V)\in\mathbb{H}_{k},\quad\\
    &\langle\mathcal{R}(p^\ast_k,P^\ast_k),(v,V)\rangle=0\quad\forall(v,V)\in\mathbb{H}_{k}.
   \end{aligned}
\end{equation}

To relate the limit
$\{(\sigma_{\infty}^\ast,u_{\infty}^\ast,U_{\infty}^\ast,p_{\infty}^\ast,P_{\infty}^\ast)\}$
to the optimality system \eqref{eqn:cem-optsys}, we exploit the marking assumption
\eqref{eqn:marking} in Algorithm \ref{alg_afem_eit}. The next result gives the weak convergence of
the residuals to zero.

\begin{lemma}\label{lem:res_weakconvzero}
For the convergent subsequence $\{(\sigma_{k_m}^\ast,u_{k_m}^\ast,U_{k_m}^\ast,p_{k_m}^\ast,P_{k_m}^\ast)\}$ given in
Theorems \ref{thm:conver_medmin} and \ref{thm:conver_medadj}, there hold
\begin{equation*}
   \begin{aligned}
    &\lim_{m\rightarrow\infty}\langle\mathcal{R}(u^\ast_{k_m},U^\ast_{k_m}),(v,V)\rangle=0\quad\forall
   (v,V)\in\mathbb{H},\\
    &\lim_{m\rightarrow\infty}\langle\mathcal{R}(p^\ast_{k_m},P^\ast_{k_m}),(v,V)\rangle=0\quad\forall
   (v,V)\in\mathbb{H}.
   \end{aligned}
\end{equation*}
\end{lemma}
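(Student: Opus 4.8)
The plan is to prove both identities by the same argument, so I focus on the state residual $\mathcal{R}(u^\ast_{k_m},U^\ast_{k_m})$; the adjoint residual is then treated identically, the only new feature being the coupling term $\langle U^\ast_{k_m}-U^\delta,V\rangle$ on the right-hand side, which is controlled by the convergence of $\{U^\ast_{k_m}\}$ from Theorem \ref{thm:conver_medmin}. First I fix $(v,V)\in\mathbb{H}$ and let $v_{k_m}=I^{sz}_{k_m}v\in V_{k_m}$ be its Scott--Zhang interpolant, so that $(v_{k_m},V)\in\mathbb{H}_{k_m}$. Since the second component is left untouched, the Galerkin orthogonality \eqref{eqn:galortho} yields $\langle\mathcal{R}(u^\ast_{k_m},U^\ast_{k_m}),(v,V)\rangle=\langle\mathcal{R}(u^\ast_{k_m},U^\ast_{k_m}),(w_{k_m},0)\rangle$ with $w_{k_m}:=v-v_{k_m}$. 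Performing elementwise integration by parts in $a(\sigma^\ast_{k_m},\cdot,\cdot)$ and recalling that $(u^\ast_{k_m},U^\ast_{k_m})$ is piecewise linear, I rewrite this as a sum of the element residuals $R_{T,1}(\sigma^\ast_{k_m},u^\ast_{k_m})$ tested against $w_{k_m}$ on each $T$ and the face residuals $J_{F,1}(\sigma^\ast_{k_m},u^\ast_{k_m},U^\ast_{k_m})$ tested against $w_{k_m}$ on each $F$. This is exactly the representation underlying $\eta_{k_m,1}$ in \eqref{def:estimator}, so the interpolation estimates of Lemma \ref{lem:est_int} convert it, elementwise, into a bound by the local indicator $\eta_{k_m,1}(\sigma^\ast_{k_m},u^\ast_{k_m},U^\ast_{k_m},T)$ weighted by $\|v\|_{H^1(D_T)}$.

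The core of the proof is to show that this weighted sum of indicators vanishes along the subsequence. I would split $\cT_{k_m}=\cT^0_{k_m}\cup\cT^+_{k_m}$ into to-be-refined and surviving elements and control the two contributions by genuinely different mechanisms. On the surviving part I exploit that, by Lemma \ref{lem:estmarked} together with the marking rule \eqref{eqn:marking} (which forces the globally largest indicator into $\mathcal{M}_{k_m}$), the \emph{maximal} indicator $\max_{T\in\cT_{k_m}}\eta_{k_m}(\cdot,T)$ tends to zero. On the to-be-refined part I instead use that the local mesh size vanishes there, i.e. $\|h_{k_m}\chi^0_{k_m}\|_{L^\infty(\Omega)}\to0$ by Lemma \ref{lem:conv_zero_mesh}, and combine it with the stability bounds of Lemma \ref{lem:stab-indicator} and the strong convergences $\sigma^\ast_{k_m}\to\sigma^\ast_\infty$ in $H^1(\Omega)$ and $(u^\ast_{k_m},U^\ast_{k_m})\to(u^\ast_\infty,U^\ast_\infty)$ in $\mathbb{H}$ furnished by Theorem \ref{thm:conver_medmin}. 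Because $\{\cT^+_k\}$ is nested increasing, the cleanest way to make the two estimates cooperate is a two-parameter (diagonal) argument: fix a level $k_0$, freeze the region $\Omega^+_{k_0}$ on which the mesh is stationary for all $k_m\ge k_0$, let $m\to\infty$ first, and only afterwards send $k_0\to\infty$.

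Concretely, for fixed $k_0$ the region $\Omega^+_{k_0}$ carries only finitely many elements with a fixed mesh, and each associated indicator is dominated by the vanishing maximal indicator, so the $\Omega^+_{k_0}$-contribution tends to zero as $m\to\infty$. The complementary region $\Omega^0_{k_0}$ is handled by Cauchy--Schwarz and Lemma \ref{lem:stab-indicator}, which bound its contribution by $c\,\|\nabla u^\ast_{k_m}\|_{L^2(\Omega^0_{k_0})}\,\|v\|_{H^1(\Omega)}$ up to boundary terms of the same type; passing $m\to\infty$ and using the strong $H^1$-convergence of $u^\ast_{k_m}$ replaces this by $c\,\|\nabla u^\ast_\infty\|_{L^2(\Omega^0_{k_0})}\,\|v\|_{H^1(\Omega)}$, which I then drive to zero by sending $k_0\to\infty$ and invoking the absolute continuity of the integral over the shrinking to-be-refined region. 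A diagonal subsequence closes the estimate, and the identical treatment of $\mathcal{R}(p^\ast_{k_m},P^\ast_{k_m})$ — now using Theorem \ref{thm:conver_medadj} and the convergence of $\{U^\ast_{k_m}\}$ for the right-hand side — delivers the second limit.

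The step I expect to be the main obstacle is precisely this surviving/to-be-refined reconciliation: on the unrefined elements there are unboundedly many indicators whose squares sum only to a bounded (not vanishing) quantity, so a single global Cauchy--Schwarz against the estimator is far too lossy to produce decay. The delicate point is therefore to apply the $L^\infty$-decay of the maximal indicator only on a fixed, finitely-populated surviving subregion, while absorbing the genuinely fine part into the vanishing mesh size and the absolute continuity of $\|\nabla u^\ast_\infty\|_{L^2(\Omega^0_{k_0})}$ as $k_0\to\infty$; arranging these two limits so that they commute in the correct order is the crux of the argument.
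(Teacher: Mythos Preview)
Your overall architecture (Galerkin orthogonality, elementwise integration by parts, split into surviving and to-be-refined elements, two-parameter limit) is the right one and matches the paper's approach. However, there is a genuine gap on the to-be-refined part.

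You claim the $\Omega^0_{k_0}$-contribution is bounded by $c\,\|\nabla u^\ast_\infty\|_{L^2(\Omega^0_{k_0})}\,\|v\|_{H^1(\Omega)}$ and then ``drive it to zero by sending $k_0\to\infty$ and invoking the absolute continuity of the integral over the shrinking to-be-refined region.'' But $\Omega^0_{k_0}$ need not shrink at all: if the algorithm eventually refines every element (e.g.\ in a quasi-uniform regime), then $\cT^+_{k_0}=\emptyset$ and $\Omega^0_{k_0}=\Omega$ for every $k_0$, so $\|\nabla u^\ast_\infty\|_{L^2(\Omega^0_{k_0})}=\|\nabla u^\ast_\infty\|_{L^2(\Omega)}$ is constant in $k_0$. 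Lemma~\ref{lem:conv_zero_mesh} only says that the \emph{mesh size} over $\Omega^0_{k_0}$ vanishes, not its measure. You mention the vanishing mesh size but never actually feed an $h$-factor into your estimate: the stability bound of Lemma~\ref{lem:stab-indicator} for $\eta_{k,1}$ carries no $h_T$, and the single power of $h_T$ from the Scott--Zhang estimate has already been consumed in producing $\eta_{k,1}$.

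The paper repairs exactly this point by first restricting to smooth test functions $\psi\in C^\infty(\overline{\Omega})$ and inserting the \emph{Lagrange} interpolant $I_k\psi$ before the Scott--Zhang step. Writing $w=\psi-I_k\psi$, the residual becomes $\langle\mathcal{R}(u^\ast_k,U^\ast_k),(w-I^{sz}_k w,0)\rangle$, so after integration by parts the weight is $\|w\|_{H^1(D_T)}\le c\,h_T\|\psi\|_{H^2(D_T)}$. This buys a genuine extra factor of $h_T$ on $\cT_k\setminus\cT_l^+$, which is then bounded by $\|h_l\|_{L^\infty(\Omega^0_l)}\to0$; the surviving part $\cT_l^+$ is handled exactly as you propose, via the finitely many elements and Lemma~\ref{lem:estmarked}. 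The argument is closed for general $(v,V)\in\mathbb{H}$ by density of $C^\infty(\overline{\Omega})$ in $H^1(\Omega)$. So the missing ingredient in your proposal is the double-interpolation trick (Lagrange then Scott--Zhang) on smooth test functions, followed by density.
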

\begin{proof}
We only prove the first assertion since the second follows analogously, and
relabel the index $k_m$ by $k$. Let $I_{k}$ and $I_{k}^{sz}$ be
the Lagrange and Scott-Zhang interpolation operators respectively associated with $V_{k}$. Then by \eqref{eqn:galortho}, elementwise
integration by parts and Lemma \ref{lem:est_int}, we deduce for $k>l$ and any $(\psi,V)\in
C^\infty(\overline{\Omega})\times\mathbb{R}_{\diamond}^L$
\begin{equation*}
    \begin{aligned}
    \big|\langle\mathcal{R}(u^\ast_{k},U^\ast_{k}),(\psi,V)\rangle\big|&=\big|\langle\mathcal{R}(u^\ast_{k},U^\ast_{k}),(\psi-I_{k}\psi,0)\rangle\big|
        =\big|\langle\mathcal{R}(u^\ast_{k},U^\ast_{k}),(w-I_{k}^{sz}w,0)\rangle\big|\\
        &=\Big|(\sigma^{\ast}_{k}\nabla u^{\ast}_{k}, \nabla (w-I_{k}^{sz}w)) +\sum_{l=1}^Lz_l^{-1}((u^{\ast}_{k}-U^{\ast}_{k,l}),(w-I_{k}^{sz}w))_{L^2(e_l)}\Big|\\
        &\leq c\sum_{T\in\cT_{k}}\eta_{k,1}(\sigma_{k}^\ast,u_{k}^\ast,U_{k}^\ast,T)\|w\|_{H^1({D}_{T})}\\
        &=c\big(\sum_{T\in\cT_k\setminus\cT_{l}^{+}}\eta_{k,1}(\sigma_{k}^\ast,u_{k}^\ast,U_{k}^\ast,T)\|w\|_{H^1({D}_{T})}
        +\sum_{T\in\cT_{l}^+}\eta_{k,1}(\sigma_{k}^\ast,u_{k}^\ast,U_{k}^\ast,T)\|w\|_{H^1({D}_{T})}\big).
    \end{aligned}
\end{equation*}
where $w= \psi-I_k\psi$. By appealing to Lemma \ref{lem:stab-indicator} and \eqref{stab-discpolyadj}, we deduce
\begin{equation*}
    \big(\displaystyle{\sum_{T\in\cT_k\setminus\cT_{l}^{+}}}\eta^2_{k,1}(\sigma_{k}^\ast,u_{k}^\ast,U_{k}^\ast,T)\big)^{1/2}\leq c
\end{equation*}
and further by the error estimate of the interpolation operator $I_{k}$ from Lemma \ref{lem:est_int}, we arrive at
\begin{equation*}
   \big|\langle\mathcal{R}(u^\ast_{k},U^\ast_{k}),(\psi,V)\rangle\big|\leq
    c_{1}\|h_{l}\|_{L^{\infty}(\Omega_{l}^0)}\|\psi\|_{H^2(\Omega)}+c_2
    \Big(\sum_{T\in\cT_{l}^{+}}\eta^2_{k,1}(\sigma_{k}^\ast,u_{k}^\ast,U_{k}^\ast,T)\Big)^{1/2}\|\psi\|_{H^2(\Omega)}.
\end{equation*}
By Lemma \ref{lem:conv_zero_mesh}, $c_{1}\|h_{l}\|_{L^{\infty}(\Omega_{l}^0)}\|\psi\|_{2}\rightarrow 0$ as
$l\rightarrow\infty$. From $\cT^+_l\subset\cT^+_k\subset\cT_k\subset \mathcal{M}_k$ for $k>l$ and the marking
condition \eqref{eqn:marking}, we deduce
\begin{align*}
    (\sum_{T\in\cT_{l}^{+}}\eta^2_{k,1}(\sigma_{k}^\ast,u_{k}^\ast,U_{k}^\ast,T))^{1/2}
    &\leq\sqrt{|\cT^+_l|}\max_{T\in\cT^+_l}\eta_{k,1}(\sigma_{k}^\ast,u_{k}^\ast,U_{k}^\ast,T)
    \leq\sqrt{|\cT^+_l|}\max_{T\in\cT^+_k}\eta_{k,1}(\sigma_{k}^\ast,u_{k}^\ast,U_{k}^\ast,T)\\
    &\leq\sqrt{|\cT^+_l|}\max_{T\in\mathcal{M}_k}\eta_{k}(\sigma_{k}^\ast,u_{k}^\ast,U_{k}^\ast,p_{k}^\ast,P_{k}^\ast,T).
\end{align*}
Now Lemma \ref{lem:estmarked} implies that for any fixed large $l_1$, we can choose some $k_1>l_1$ such that
\begin{equation*}
  c_2(\sum_{T\in\cT_{l}^{+}}\eta^2_{k,1}(\sigma_{k}^\ast,u_{k}^\ast,U_{k}^\ast,T))^{1/2}\|\psi\|_{2}<\varepsilon
\end{equation*}
for any positive small number $\varepsilon$ and $k>k_1$. Thus, we arrive at
\begin{equation*}
  \lim_{m\rightarrow\infty}\langle\mathcal{R}(u^\ast_{k_m},U^\ast_{k_m}),(v,V)\rangle=0\quad\forall(v,V)\in C^{\infty}(\overline{\Omega})\times\mathbb{R}_{\diamond}^L,
\end{equation*}
which, together with the density of $C^\infty(\overline{\Omega})$ in $H^1(\Omega)$, gives the desired assertion.
\end{proof}

Next we show that the limit $(\sigma^\ast_{\infty},u^\ast_{\infty},U^\ast_{\infty},p^\ast_{\infty},P^\ast_{\infty})$ actually
solves the variational equations in \eqref{eqn:cem-optsys}.
\begin{lemma}\label{lem:vp_mc}
The solution to problem \eqref{eqn:cem-medoptsys} solves the two variational equations in \eqref{eqn:cem-optsys}, i.e.,
\begin{equation*}
  \begin{aligned}
   a(\sigma^*_\infty,(u^\ast_\infty,U_\infty^\ast),(v,V)) &= \langle I, V\rangle\quad \forall (v,V)\in\mathbb{H},\\
   a(\sigma^*_\infty,(p^\ast_\infty,P_\infty^\ast),(v,V)) &= \langle U_\infty^*-U^\delta, V\rangle\quad \forall (v,V)\in\mathbb{H}.\\
  \end{aligned}
\end{equation*}
\end{lemma}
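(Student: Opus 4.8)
The plan is to pass to the limit in the discrete state and adjoint equations along the convergent subsequence, thereby upgrading the validity of the limiting equations in \eqref{eqn:cem-medoptsys} from test functions in $\mathbb{H}_\infty$ to all of $\mathbb{H}$. The essential tool is Lemma \ref{lem:res_weakconvzero}, which asserts that the residuals $\langle\mathcal{R}(u^\ast_{k_m},U^\ast_{k_m}),(v,V)\rangle$ and $\langle\mathcal{R}(p^\ast_{k_m},P^\ast_{k_m}),(v,V)\rangle$ vanish in the limit for every $(v,V)\in\mathbb{H}$. By the very definition of the residual, this means
\[
  \lim_{m\to\infty}\big[a(\sigma_{k_m}^\ast,(u_{k_m}^\ast,U_{k_m}^\ast),(v,V))-\langle I,V\rangle\big]=0,
\]
so it remains only to identify the limit of the trilinear form.

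First I would establish that $a(\sigma_{k_m}^\ast,(u_{k_m}^\ast,U_{k_m}^\ast),(v,V))\to a(\sigma_\infty^\ast,(u_\infty^\ast,U_\infty^\ast),(v,V))$ for each fixed $(v,V)\in\mathbb{H}$. The boundary contributions $\sum_l z_l^{-1}(u_{k_m}^\ast-U_{k_m,l}^\ast,v-V_l)_{L^2(e_l)}$ pose no difficulty: by the trace theorem, the $H^1(\Omega)$ convergence $u_{k_m}^\ast\to u_\infty^\ast$ from Theorem \ref{thm:conver_medmin} yields $u_{k_m}^\ast\to u_\infty^\ast$ in $L^2(e_l)$, and $U_{k_m,l}^\ast\to U_{\infty,l}^\ast$, so each boundary term converges. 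The main point is the volume term $(\sigma_{k_m}^\ast\nabla u_{k_m}^\ast,\nabla v)$, in which both factors depend on $m$. I would show that $\sigma_{k_m}^\ast\nabla u_{k_m}^\ast\to\sigma_\infty^\ast\nabla u_\infty^\ast$ strongly in $L^2(\Omega)$ by splitting
\[
  \sigma_{k_m}^\ast\nabla u_{k_m}^\ast-\sigma_\infty^\ast\nabla u_\infty^\ast
  =\sigma_{k_m}^\ast\nabla(u_{k_m}^\ast-u_\infty^\ast)+(\sigma_{k_m}^\ast-\sigma_\infty^\ast)\nabla u_\infty^\ast.
\]
The box constraint $\sigma_{k_m}^\ast\leq\lambda^{-1}$ together with the $H^1(\Omega)$ convergence of $u_{k_m}^\ast$ controls the first summand, while the pointwise convergence $\sigma_{k_m}^\ast\to\sigma_\infty^\ast$ a.e.\ in $\Omega$, dominated by the integrable majorant $\lambda^{-1}|\nabla u_\infty^\ast|\in L^2(\Omega)$, handles the second via Lebesgue's dominated convergence theorem. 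Combining this with the convergence of the boundary terms gives the desired limit of the trilinear form, and subtracting the residual relation yields
\[
  a(\sigma_\infty^\ast,(u_\infty^\ast,U_\infty^\ast),(v,V))=\langle I,V\rangle\qquad\forall(v,V)\in\mathbb{H},
\]
which is the first assertion.

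The adjoint equation follows by the identical argument, using the $H^1(\Omega)$ convergence of $p_{k_m}^\ast$ from Theorem \ref{thm:conver_medadj} in place of that of $u_{k_m}^\ast$; here the right-hand side also converges, since $U_{k_m}^\ast\to U_\infty^\ast$ gives $\langle U_{k_m}^\ast-U^\delta,V\rangle\to\langle U_\infty^\ast-U^\delta,V\rangle$. I expect the only genuinely delicate step to be the passage to the limit in the product $\sigma_{k_m}^\ast\nabla u_{k_m}^\ast$, and its adjoint analogue: since the product of two sequences that converge only in $H^1(\Omega)$ need not converge by naive multiplication, one must exploit the $L^\infty(\Omega)$ bound furnished by the box constraint together with a.e.\ convergence and dominated convergence, rather than relying on weak convergence alone. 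Everything else is a routine reorganization of the already established strong convergences.
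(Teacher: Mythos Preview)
Your proposal is correct and follows essentially the same route as the paper: both combine Lemma~\ref{lem:res_weakconvzero} with the strong convergences of Theorems~\ref{thm:conver_medmin} and~\ref{thm:conver_medadj}, handling the volume term by a splitting that exploits the box constraint and Lebesgue's dominated convergence theorem. The only cosmetic difference is in the splitting of the cross term: you write $\sigma_{k_m}^\ast\nabla(u_{k_m}^\ast-u_\infty^\ast)+(\sigma_{k_m}^\ast-\sigma_\infty^\ast)\nabla u_\infty^\ast$ and deduce strong $L^2$ convergence of the flux, whereas the paper writes $\sigma_\infty^\ast\nabla(u_\infty^\ast-u_{k_m}^\ast)+(\sigma_\infty^\ast-\sigma_{k_m}^\ast)\nabla u_{k_m}^\ast$ and, for the second summand, applies Cauchy--Schwarz to pair the coefficient difference with the fixed $\nabla v$ rather than with $\nabla u_{k_m}^\ast$; both variants rely on the same pointwise convergence and $L^\infty$ bound, and yield the same conclusion.
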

\begin{proof}
We prove only the first assertion, since the proof of the second is analogous.
Given the convergent subsequence $\{(\sigma_{k_m}^\ast,u_{k_m}^\ast,U_{k_m}^\ast,p_{k_m}^\ast,P_{k_m}^\ast)\}$
in Theorems \ref{thm:conver_medmin} and \ref{thm:conver_medadj},
for any $(v,V)\in\mathbb{H}$, there holds
\begin{equation*}
  \begin{aligned}
  \Big|a(\sigma_\infty^\ast,(u_\infty^*,U_\infty^*),(v,V))-&\langle I,V\rangle\Big|
    \leq  \sum_{l=1}^Lz_l^{-1}\Big|(u^{\ast}_{\infty}-U^{\ast}_{\infty,l}-u^{\ast}_{k_m}+U^{\ast}_{k_m,l},v-V_{l})_{L^2(e_l)}\Big|\\
    & +\Big|((\sigma^{\ast}_{\infty}\nabla u^{\ast}_{\infty} -\sigma^{\ast}_{k_m}\nabla u^{\ast}_{k_m}),\nabla v)_{L^2(\Omega)}\Big|
   +\big|\langle\mathcal{R}(u^\ast_{k_m},U^\ast_{k_m}),(v,V)\rangle\big|. 
  \end{aligned}
\end{equation*}
In view of Theorem \ref{thm:conver_medmin} and Lemma \ref{lem:res_weakconvzero}, the first
and third terms tend to zero. For the second term,
\begin{equation*}
  \begin{aligned}
  |((\sigma^{\ast}_{\infty}\nabla u^{\ast}_{\infty}-\sigma^{\ast}_{k_m}u^{\ast}_{k_m}),\nabla v)|
    & \leq |(\sigma^{\ast}_{\infty}\nabla(u^{\ast}_{\infty}-u^\ast_{k_m}),\nabla v)| + |((\sigma^{\ast}_{\infty}-\sigma^{\ast}_{k_m})\nabla
       u^{\ast}_{k_m},\nabla v)|\\
    & \leq |(\sigma^{\ast}_{\infty}\nabla(u^{\ast}_{\infty}-u^\ast_{k_m}),\nabla v)| + \|\nabla u^\ast_{k_m}\|_{L^2(\Omega)}\|(\sigma^{\ast}_{\infty}-\sigma^{\ast}_{k_m})\nabla v\|_{L^2(\Omega)}\rightarrow0,
  \end{aligned}
\end{equation*}
by the convergence of $\{u_{k_m}^\ast\}$, and the pointwise convergence of $\{\sigma_{k_m}^\ast\}$ in
Theorem \ref{thm:conver_medmin} and Lebesgue's dominated convergence theorem \cite[pp. 20]{Evans:1992a}.
\end{proof}

Now we turn to the variational inequality in \eqref{eqn:cem-optsys}. We resort again to a density argument:
we first show the assertion over a smooth subset, and then extend it to $\mathcal{A}$ by a density argument.

\begin{lemma}\label{lem:gat_mc}
The solution to the variational inequality of problem $\eqref{eqn:cem-medoptsys}$ satisfies
\begin{equation*}
  \alpha(\nabla\sigma^\ast_\infty,\nabla(\mu-\sigma^\ast_\infty))
   -(\nabla u_\infty^\ast,\nabla p_{\infty}^\ast(\mu-\sigma_{\infty}^\ast))\geq0\quad\forall\mu\in\mathcal{A}.
\end{equation*}
\end{lemma}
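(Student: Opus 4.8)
The plan is to prove the inequality first for smooth test functions $\mu\in\mathcal{A}\cap C^{\infty}(\overline{\Omega})$ and then pass to a general $\mu\in\mathcal{A}$ by density. Singling out smooth $\mu$ is essential because its Lagrange interpolant $I_{k}\mu$ preserves the pointwise bounds $\lambda\le I_k\mu\le\lambda^{-1}$, so $I_{k_m}\mu\in\mathcal{A}_{k_m}$ is an admissible test function in the discrete variational inequality, i.e.\ the third line of \eqref{eqn:cem-discoptsys}. The obstruction that $V_{k}$ is not dense in $H^1(\Omega)$ -- equivalently, that $I_{k_m}\mu$ need not converge to $\mu$ in $H^1(\Omega)$ -- will be circumvented exactly as in Lemma \ref{lem:res_weakconvzero}, by exposing the estimator $\eta_{k_m,3}$ hidden in the interpolation defect and invoking the marking machinery.

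Concretely, I would insert $I_{k_m}\mu\in\mathcal{A}_{k_m}$ into the discrete variational inequality and split $I_{k_m}\mu-\sigma^{\ast}_{k_m}=(I_{k_m}\mu-\mu)+(\mu-\sigma^{\ast}_{k_m})$, so that its left-hand side becomes $\mathrm{I}+\mathrm{II}$, where
\[
\mathrm{I}=\alpha(\nabla\sigma^{\ast}_{k_m},\nabla(I_{k_m}\mu-\mu))-((I_{k_m}\mu-\mu)\nabla u^{\ast}_{k_m},\nabla p^{\ast}_{k_m})
\]
is the interpolation-defect term and
\[
\mathrm{II}=\alpha(\nabla\sigma^{\ast}_{k_m},\nabla(\mu-\sigma^{\ast}_{k_m}))-((\mu-\sigma^{\ast}_{k_m})\nabla u^{\ast}_{k_m},\nabla p^{\ast}_{k_m})
\]
is the remaining term. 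For $\mathrm{II}$ I would pass to the limit along the subsequence using Theorems \ref{thm:conver_medmin} and \ref{thm:conver_medadj}: the linear part converges since $\sigma^{\ast}_{k_m}\to\sigma^{\ast}_{\infty}$ in $H^1(\Omega)$, while for the nonlinear part I would split off the gradient differences (controlled by the strong $H^1(\Omega)$-convergence of $u^{\ast}_{k_m},p^{\ast}_{k_m}$ and the uniform $L^{\infty}(\Omega)$-bound $\lambda\le\sigma^{\ast}_{k_m}\le\lambda^{-1}$) and treat the leftover $((\mu-\sigma^{\ast}_{k_m})\nabla u^{\ast}_{\infty},\nabla p^{\ast}_{\infty})$ by Lebesgue's dominated convergence theorem, using $\sigma^{\ast}_{k_m}\to\sigma^{\ast}_{\infty}$ a.e.\ and $\nabla u^{\ast}_{\infty}\cdot\nabla p^{\ast}_{\infty}\in L^1(\Omega)$. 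Thus $\mathrm{II}$ tends to exactly the target quantity $\alpha(\nabla\sigma^{\ast}_{\infty},\nabla(\mu-\sigma^{\ast}_{\infty}))-((\mu-\sigma^{\ast}_{\infty})\nabla u^{\ast}_{\infty},\nabla p^{\ast}_{\infty})$.

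The main obstacle is to show $\mathrm{I}\to0$. This is precisely the quantity estimated in \eqref{ub_state_aux4}: elementwise integration by parts (using that $\sigma^{\ast}_{k_m},u^{\ast}_{k_m},p^{\ast}_{k_m}$ are piecewise linear) rewrites $\mathrm{I}$ in terms of the volume residuals $R_{T,2}$ and the face jumps $J_{F,2}$ tested against $\mu-I_{k_m}\mu$, whence Lemma \ref{lem:est_int} yields $|\mathrm{I}|\le c\sum_{T\in\cT_{k_m}}\eta_{k_m,3}(\sigma^{\ast}_{k_m},u^{\ast}_{k_m},p^{\ast}_{k_m},T)\,\|\mu\|_{H^2(D_T)}$. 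I would then repeat the two-region argument of Lemma \ref{lem:res_weakconvzero}: fix $l$ and split $\cT_{k_m}=(\cT_{k_m}\setminus\cT^{+}_{l})\cup\cT^{+}_{l}$ for $k_m>l$. On the refined part $\cT_{k_m}\setminus\cT^{+}_{l}$, every element has $h_T\le\|h_l\chi^{0}_{l}\|_{L^{\infty}(\Omega)}$, so Lemma \ref{lem:stab-indicator}, the stability bound \eqref{stab-discpolyadj}, and the boundedness of $\|\nabla\sigma^{\ast}_{k_m}\|_{L^2(\Omega)}$ give a bound by a positive power of $\|h_l\chi^{0}_{l}\|_{L^{\infty}(\Omega)}$, which vanishes as $l\to\infty$ by Lemma \ref{lem:conv_zero_mesh}; on the fixed finite set $\cT^{+}_{l}$ the marking condition \eqref{eqn:marking} gives $\max_{T\in\cT^{+}_{l}}\eta_{k_m,3}(\cdot,T)\le\max_{T\in\mathcal{M}_{k_m}}\eta_{k_m}(\cdot,T)\to0$ by Lemma \ref{lem:estmarked}, so this part vanishes as $m\to\infty$. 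Choosing $l$ and then $m$ large shows $\mathrm{I}\to0$, and passing to the limit in $\mathrm{I}+\mathrm{II}\ge0$ establishes the inequality for every $\mu\in\mathcal{A}\cap C^{\infty}(\overline{\Omega})$.

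Finally, I would extend to an arbitrary $\mu\in\mathcal{A}$ by the density of $\mathcal{A}\cap C^{\infty}(\overline{\Omega})$ in $\mathcal{A}$ in the $H^1(\Omega)$-norm. Taking $\mu_n\to\mu$ with $\mu_n\in\mathcal{A}\cap C^{\infty}(\overline{\Omega})$, the term $\alpha(\nabla\sigma^{\ast}_{\infty},\nabla(\mu_n-\sigma^{\ast}_{\infty}))$ converges by $H^1(\Omega)$-convergence, while $((\mu_n-\sigma^{\ast}_{\infty})\nabla u^{\ast}_{\infty},\nabla p^{\ast}_{\infty})$ converges by Lebesgue's dominated convergence theorem, since $\{\mu_n\}$ is uniformly bounded in $L^{\infty}(\Omega)$, converges a.e.\ along a subsequence, and the integrand is dominated by $c\,|\nabla u^{\ast}_{\infty}\cdot\nabla p^{\ast}_{\infty}|\in L^1(\Omega)$. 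Hence the inequality is preserved in the limit, completing the proof. The only genuinely delicate point is the treatment of $\mathrm{I}$, where the absence of density of $V_{k}$ in $H^1(\Omega)$ is compensated by the estimator-and-marking mechanism rather than by naive interpolation convergence.
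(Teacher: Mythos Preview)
Your proposal is correct and follows essentially the same approach as the paper: the same splitting $\mathrm{I}+\mathrm{II}\ge0$ via $I_{k_m}\mu\in\mathcal{A}_{k_m}$, the same estimator-and-marking two-region argument for $\mathrm{I}$ (exactly \eqref{ub_state_aux4} combined with Lemmas \ref{lem:conv_zero_mesh}, \ref{lem:stab-indicator}, \ref{lem:estmarked}), the same passage to the limit in $\mathrm{II}$ using Theorems \ref{thm:conver_medmin}--\ref{thm:conver_medadj} and dominated convergence, and the same density extension to $\mathcal{A}$. The paper organizes the inequality as $\mathrm{II}\ge -\mathrm{I}$ rather than $\mathrm{I}+\mathrm{II}\ge0$, but this is purely cosmetic.
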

\begin{proof}
Like before, we relabel the index $k_m$ by $k$,
and let $I_k$ be the Lagrange interpolation operator associated with $V_k$.
Then for any $\mu\in\widetilde{\mathcal{A}}:=\mathcal{A}\cap C^\infty(\overline{\Omega})$, $I_{k}\mu\in\mathcal{A}_{k}$ and the
discrete variational inequality in \eqref{eqn:cem-discoptsys} yields
\begin{equation}
\begin{aligned}\label{eqn:gat_mc_01}
 &\alpha(\nabla\sigma^\ast_k,\nabla(\mu-\sigma^\ast_k))
    -((\mu-\sigma_{k}^\ast)\nabla u_k^\ast,\nabla p_{k}^\ast )\\
   =&\alpha(\nabla\sigma^\ast_k,\nabla(\mu-I_k\mu))
   -((\mu-I_{k}\mu)\nabla u_k^\ast,\nabla p_{k}^\ast)\\
   &\quad+\alpha(\nabla\sigma^\ast_k,\nabla(I_k\mu-\sigma_k^\ast))
   -((I_{k}\mu-\sigma_k^\ast)\nabla u_k^\ast,\nabla p_{k}^\ast)\\
   \geq&\alpha(\nabla\sigma^\ast_k,\nabla(\mu-I_k\mu))
   -((\mu-I_{k}\mu)\nabla u_k^\ast,\nabla p_{k}^\ast ).
\end{aligned}
\end{equation}
Using elementwise integration by parts, the definition of $\eta_{k,3}$ and error estimates for
$I_{k}$, cf. Lemma \ref{lem:est_int}, we deduce that for $k>l$, there holds
\begin{align*}
     \Big|\alpha(\nabla\sigma^\ast_k,\nabla(\mu-I_k\mu))
       & -((\mu-I_{k}\mu)\nabla u_k^\ast,\nabla p_{k}^\ast)\Big|
    \leq c\sum_{T\in\cT_{k}}\eta_{k,3}(\sigma_k^\ast,u_k^\ast,p_k^\ast,T)\|\mu\|_{H^2(T)}\\
    \leq& c_3\Big(\big(\sum_{T\in\cT_{k}\setminus\cT^+_l}\eta^2_{k,3}(\sigma_k^\ast,u_k^\ast,p_k^\ast,T)\big)^{1/2}
        +\big(\sum_{T\in\cT_{l}^+}\eta^2_{k,3}(\sigma_k^\ast,u_k^\ast,p_k^\ast,T)\big)^{1/2}\Big)\|\mu\|_{H^2(\Omega)}.
\end{align*}
The Lemma \ref{lem:stab-indicator}, \eqref{stab-discpolyadj}, Theorem \ref{thm:conver_medmin} and Lemma \ref{lem:conv_zero_mesh} give
\begin{align*}
    \sum_{T\in\cT_{k}\setminus\cT^+_l}\eta^2_{k,3}(\sigma_k^\ast,u_k^\ast,p_k^\ast,T)
    &\leq c(\|h_{l}\|^{4-d}_{L^\infty(\Omega_{0}^l)}\|\nabla p_k\|^2_{L^2(\Omega)}\sum_{T\in\cT_{k}\setminus\cT^+_l}\|\nabla u_{k}^\ast\|^2_{L^2(T)}+\|h_{l}\|^{2}_{L^\infty(\Omega_{0}^l)}\|\nabla\sigma_{k}^\ast\|^2_{L^2(\Omega)})\nonumber\\
   &\leq c(\|h_{l}\|^{4-d}_{L^\infty(\Omega_{0}^l)}+\|h_{l}\|^{2}_{L^\infty(\Omega_{0}^l)})\leq c\|h_{l}\|^{4-d}_{L^\infty(\Omega_{0}^l)}\rightarrow 0.
\end{align*}
Upon noting the inclusion $\cT_l^+\subset\cT_{k}$ for $k>l$, we deduce from the marking condition \eqref{eqn:marking}
\begin{equation*}
  (\sum_{T\in\cT^+_l}\eta^2_{k,3}(\sigma_k^\ast,u_k^\ast,p_k^\ast,T))^{1/2}
    \leq\sqrt{|\cT_{l}^+|}\max_{T\in\cT_l^+}\eta_{k,3}(\sigma_{k}^\ast,u_{k}^\ast,p_{k}^\ast,T)
   \leq\sqrt{|\cT^+_l|}\max_{T\in\mathcal{M}_k}\eta_{k}(\sigma_{k}^\ast,u_{k}^\ast,U_{k}^\ast,p_{k}^\ast,P_{k}^\ast,T).
\end{equation*}
Appealing again to Lemma \ref{lem:estmarked}, we can choose $k_2>l_2$ for some large fixed $l_2$ such that when $k>k_{2}$
$c_{3}({\sum_{T\in\cT_{l}^+}\eta^2_{k,3}}(\sigma_k^\ast,u_k^\ast,p_k^\ast,T))^{1/2}\|\mu\|_{H^2(\Omega)}$
is smaller than any given positive number. Hence
\begin{equation}\label{eqn:gat_mc_02}
   (\alpha\nabla\sigma^\ast_k,\nabla(\mu-I_k\mu)) -(\nabla
   u_k^\ast,\nabla p_{k}^\ast(\mu-I_{k}\mu))\rightarrow 0\quad\forall\mu\in\widetilde{\mathcal{A}}.
\end{equation}
Using the $H^1(\Omega)$-convergence of $\{\sigma_{k}^\ast\}$ from Theorem \ref{thm:conver_medmin}, we have
    \begin{equation}\label{eqn:gat_mc_03}
        (\alpha\nabla\sigma^\ast_k,\nabla(\mu-\sigma^\ast_k))
        \rightarrow(\alpha\nabla\sigma^\ast_\infty,\nabla(\mu-\sigma^\ast_\infty))\quad\forall\mu\in\widetilde{\mathcal{A}}.
    \end{equation}
The convergence of $\{p^\ast_{k}\}$ to $p_\infty^\ast$ in $H^1(\Omega)$ in Theorem \ref{thm:conver_medadj}, \eqref{stab-discpolyadj} and the box constraint in $\widetilde{\mathcal{A}}$ yield
\begin{equation*}
   (\mu\nabla u_k^\ast,\nabla (p_{k}^\ast-p_\infty^\ast))\leq c\|\nabla(p_{k}^\ast-p_\infty^\ast)\|_{L^2(\Omega)}\rightarrow 0,
\end{equation*}
and this together with Theorem \ref{thm:conver_medmin} implies
\begin{equation}\label{eqn:gat_mc_04}
    (\mu\nabla u_k^\ast,\nabla p_{k}^\ast )=(\mu \nabla u_k^\ast,\nabla (p_{k}^\ast-p_\infty^\ast))+
    (\mu\nabla u_k^\ast,\nabla p_\infty^\ast)
    \rightarrow   (\mu\nabla u^\ast_\infty,\nabla p_\infty^\ast )\quad\forall\mu\in\widetilde{\mathcal{A}}.
\end{equation}
By elementary calculations, we derive
\begin{align*}
   (\sigma_k^\ast\nabla u_{k}^\ast, \nabla p_k^\ast)-(\sigma_\infty^\ast\nabla u_{\infty}^\ast, \nabla p_\infty^\ast )&=
   (\sigma_k^\ast\nabla u_{k}^\ast, \nabla(p_k^\ast-p_\infty^\ast))
   +((\sigma_k^\ast-\sigma_{\infty}^\ast)\nabla u_{k}^\ast, \nabla p_\infty^\ast)\\
   &\quad+(\sigma_{\infty}^\ast\nabla(u_{k}^\ast-u_\infty^\ast),\nabla p_\infty^\ast).
\end{align*}
Repeating the arguments for \eqref{eqn:gat_mc_04} yields that for the first and third terms there hold
$(\sigma_k^\ast\nabla u_{k}^\ast, \nabla(p_k^\ast-p_\infty^\ast) )\rightarrow 0$ and
$(\sigma_{\infty}^\ast\nabla(u_{k}^\ast-u_\infty^\ast),\nabla p_\infty^\ast)\rightarrow 0.$
The stability estimate \eqref{stab-discpolyadj}, the pointwise convergence of $\{\sigma_{k}^\ast\}$ of
Theorem \ref{thm:conver_medmin} and Lebesgue's dominated convergence theorem \cite[pp. 20]{Evans:1992a} show
\begin{equation*}
    ((\sigma_k^\ast-\sigma_{\infty}^\ast)\nabla u_{k}^\ast, \nabla p_\infty^\ast)\leq c\|(\sigma_k^\ast-\sigma_{\infty}^\ast)\nabla p_\infty^\ast \|_{L^2(\Omega)}\rightarrow 0.
\end{equation*}
Hence
\begin{equation}\label{eqn:gat_mc_05}
   (\sigma_k^\ast\nabla u_{k}^\ast, \nabla p_k^\ast)
    \rightarrow (\sigma_\infty^\ast\nabla u_{\infty}^\ast, \nabla
    p_\infty^\ast).
\end{equation}
Now by passing both sides of \eqref{eqn:gat_mc_01} to the limit and combining \eqref{eqn:gat_mc_02}-\eqref{eqn:gat_mc_05}, we obtain
\begin{equation*}
    \alpha(\nabla\sigma^\ast_\infty,\nabla(\mu-\sigma^\ast_\infty))_{L^2(\Omega)}
    -(\nabla u^\ast_\infty,\nabla p_\infty^\ast(\mu-\sigma_\infty^\ast))_{L^2(\Omega)}\geq 0\quad\forall\mu\in\widetilde{\mathcal{A}}.
\end{equation*}
By means of the density of $C^\infty(\overline{\Omega})$ in $H^1(\Omega)$ and the construction via a
standard mollifier \cite[pp. 122]{Evans:1992a}, for any $\mu\in\mathcal{A}$ there exists a sequence $\{\mu^n\}
\subset\widetilde{\mathcal{A}}$ such that $\|\mu^n-\mu\|_{H^1(\Omega)}\rightarrow 0$ as $n\rightarrow\infty$.
Then by Lebesgue's dominated convergence theorem \cite[pp. 20]{Evans:1992a}, we deduce
\begin{equation*}
   (\alpha\nabla\sigma^\ast_\infty,\nabla \mu^n )\rightarrow
   (\alpha\nabla\sigma^\ast_\infty,\nabla\mu )\quad \mbox{and}\quad
        (\mu^n\nabla u^\ast_\infty,\nabla p_\infty^\ast )\rightarrow
        (\mu\nabla u^\ast_\infty,\nabla p_\infty^\ast)
    \end{equation*}
after possibly passing to a subsequence. The desired result follows from the preceding two estimates.
\end{proof}

Finally, by combining preceding results, we obtain the main theoretical result: the sequence of solutions generated by the
AFEM contains a subsequence converging to a solution of  \eqref{eqn:cem-optsys}.
\begin{theorem}\label{thm:conv_alg}
The sequence of discrete solutions $\{(\sigma_{k}^\ast,u_{k}^\ast,U_{k}^\ast,p_{k}^\ast,P_{k}^\ast)\}$
generated by Algorithm \ref{alg_afem_eit} has a subsequence $\{(\sigma_{k_m}^\ast,u_{k_m}^\ast,U_{k_m}^\ast,p_{k_m}^\ast,P_{k_m}^\ast)\}$ converging to a solution $(\sigma^\ast,u^\ast,U^\ast,p^\ast,P^\ast)$ to the continuous optimality system
\eqref{eqn:cem-optsys} in the following sense:
\begin{equation*}
   \|\sigma^\ast_{k_m}-\sigma^\ast\|_{H^1(\Omega)},~\|(u^\ast_{k_m}-u^\ast,U^\ast_{k_m}-U^\ast)\|_{\mathbb{H},\ast},~\|(p^\ast_{k_m}-p^\ast,P^\ast_{k_m}-P^\ast)\|_{\mathbb{H},\ast}\rightarrow 0\quad\mbox{as}~m\rightarrow\infty.
\end{equation*}
\end{theorem}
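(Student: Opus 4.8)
The plan is to assemble the theorem directly from the convergence results and optimality characterizations established in Sections \ref{ssec:limitingproblem} and \ref{ssec:convergence}; no new estimates are required, and the argument is essentially a synthesis. First I would invoke Theorem \ref{thm:conver_medmin} to extract from the full sequence a subsequence $\{(\sigma_{k_m}^\ast,u_{k_m}^\ast,U_{k_m}^\ast)\}$ converging to a minimizer $(\sigma_\infty^\ast,u_\infty^\ast,U_\infty^\ast)$ of the limiting problem \eqref{eqn:medopt}-\eqref{eqn:medpoly}, with $\sigma_{k_m}^\ast\to\sigma_\infty^\ast$ strongly in $H^1(\Omega)$ and a.e.\ in $\Omega$, and $(u_{k_m}^\ast,U_{k_m}^\ast)\to(u_\infty^\ast,U_\infty^\ast)$ in $\mathbb{H}$. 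Along this \emph{same} subsequence, Theorem \ref{thm:conver_medadj} delivers the convergence of the adjoint solutions $(p_{k_m}^\ast,P_{k_m}^\ast)\to(p_\infty^\ast,P_\infty^\ast)$ in the $\|\cdot\|_{\mathbb{H},\ast}$-norm, where $(p_\infty^\ast,P_\infty^\ast)$ solves the limiting adjoint equation in \eqref{eqn:cem-medoptsys}. These two theorems already furnish the three displayed convergence statements of the theorem, once the continuous limit $(\sigma^\ast,u^\ast,U^\ast,p^\ast,P^\ast)$ is identified with the tuple $(\sigma_\infty^\ast,u_\infty^\ast,U_\infty^\ast,p_\infty^\ast,P_\infty^\ast)$.

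It then remains to verify that this limiting tuple is in fact a solution of the \emph{continuous} optimality system \eqref{eqn:cem-optsys}, rather than merely of the limiting system \eqref{eqn:cem-medoptsys} posed over the smaller spaces $\mathbb{H}_\infty$ and $\mathcal{A}_\infty$. This gap is precisely what Lemmas \ref{lem:vp_mc} and \ref{lem:gat_mc} close: Lemma \ref{lem:vp_mc} upgrades the two variational equations from test functions $(v,V)\in\mathbb{H}_\infty$ to arbitrary $(v,V)\in\mathbb{H}$, and Lemma \ref{lem:gat_mc} upgrades the variational inequality from $\mu\in\mathcal{A}_\infty$ to all $\mu\in\mathcal{A}$. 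Invoking these two lemmas completes the verification that $(\sigma_\infty^\ast,u_\infty^\ast,U_\infty^\ast,p_\infty^\ast,P_\infty^\ast)$ satisfies every line of \eqref{eqn:cem-optsys}.

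The genuine analytical difficulty has already been absorbed into the preceding results, so the only remaining point of care is bookkeeping of the subsequence: one must ensure that the state convergence of Theorem \ref{thm:conver_medmin}, the adjoint convergence of Theorem \ref{thm:conver_medadj}, and the optimality upgrades of Lemmas \ref{lem:vp_mc}-\ref{lem:gat_mc} all refer to one common limit along the \emph{same} indices $\{k_m\}$. Since Theorem \ref{thm:conver_medadj} is stated under the hypotheses of Theorem \ref{thm:conver_medmin}, and the two upgrade lemmas are likewise phrased for the convergent subsequence of those theorems, no further extraction is needed. I would therefore conclude by setting $(\sigma^\ast,u^\ast,U^\ast,p^\ast,P^\ast):=(\sigma_\infty^\ast,u_\infty^\ast,U_\infty^\ast,p_\infty^\ast,P_\infty^\ast)$ and reading off the three claimed convergences directly from Theorems \ref{thm:conver_medmin} and \ref{thm:conver_medadj}. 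The conceptual crux worth flagging is that the whole two-step architecture exists only because the adaptively generated spaces $V_k$ need not be dense in $H^1(\Omega)$; the limiting problem is the device that circumvents this failure, and the present theorem is the step that translates back from the limiting world to the continuous one.
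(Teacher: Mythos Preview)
Your proposal is correct and matches the paper's approach exactly: the paper offers no separate proof for Theorem \ref{thm:conv_alg}, simply prefacing it with ``by combining preceding results,'' so the assembly you describe from Theorems \ref{thm:conver_medmin} and \ref{thm:conver_medadj} together with Lemmas \ref{lem:vp_mc} and \ref{lem:gat_mc} is precisely what is intended. The only minor addition is that the $\mathbb{H}$-convergence of $(u_{k_m}^\ast,U_{k_m}^\ast)$ in Theorem \ref{thm:conver_medmin} translates to $\|\cdot\|_{\mathbb{H},\ast}$-convergence via the norm equivalence of Lemma \ref{lem:normequiv}.
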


\begin{remark}\label{rmk:practical-mark}{
Theorem \ref{thm:conv_alg} is only concerned with the convergence of the adaptive solution
to the continuous Tikhonov solution, which is limited by the data accuracy (i.e.,
the noise level $\delta$) and regularization parameter $\alpha$. In the spirit of
the classical discrepancy principle \cite{ItoJin:2014}, it is unnecessary to make
the adaptive FEM approximation of the forward model far more accurate than the data accuracy. In practice,
it is advisable to terminate the refinement step when the estimator $\eta_k$ falls below a
multiple of the noise level $\delta$, however, the regularizing property (and the convergence
rate) of such a procedure is still to be studied.}
\end{remark}

\section{Numerical experiments and discussions}\label{sec:numer}

In this section, we present numerical results to illustrate the convergence and efficiency of the adaptive algorithm. {All
the computations were carried out using \texttt{MATLAB} 2013a on a personal laptop with 6.00 GB RAM and 2.5 GHz CPU.}
The setup of the numerical experiments is as follows. The domain $\Omega$ is taken to be a square $\Omega=(-1,1)^2$.
There are sixteen electrodes $\{e_l\}_{l=1}^L$ (with $L=16$) evenly distributed along the boundary $\Gamma$,
each of the length $1/4$, thus occupying one half of the boundary $\Gamma$. The contact impedances
$\{z_l\}_{l=1}^L$ on the electrodes $\{e_l\}_{l=1}^L$ are all set to unit, and the background
conductivity $\sigma_0$ is taken to be $\sigma_0\equiv 1$. For each example, we measure the electrode
voltages $U$ for the first ten sinusoidal input currents, in order to gain enough information about the true
conductivity $\sigma^\dag$. Then the noisy data $U^\delta$ is generated by adding componentwise Gaussian noise to
the exact data $U(\sigma^\dag)$ as follows
\begin{equation*}
  U^\delta_l = U_l(\sigma^\dag) + \epsilon \max_l|U_l(\sigma^\dag)|\xi_l,\ \ l=1,\ldots, L,
\end{equation*}
where $\epsilon$ is the (relative) noise level, and $\{\xi_l\}$ follow the standard normal distribution. {
The exact data $U(\sigma^\dag)$ is computed on a much finer mesh generated adaptively (and thus completely
different from the one used in the inversion), in order to avoid the most obvious form of ``inverse crime". }In all the
experiments, the marking strategy \eqref{eqn:marking} in the module \texttt{MARK} is represented by a
specific maximum strategy, cf. Remark 3.2, i.e., mark a minimal subset $\mathcal{M}_k\subseteq\cT_k$, i.e., the refinement set, such that
\begin{equation*}
   \eta_{k}(\sigma_k^{\ast},u_k^{\ast},U_k^{\ast},p_k^{\ast},P_k^{\ast},\mathcal{M}_k)
        \geq\theta 
        \eta_{k}(\sigma_k^{\ast},u_k^{\ast},U_k^{\ast},p_k^{\ast},P_k^{\ast},\cT_k),
\end{equation*}
with a threshold $\theta\in(0,1]$. In the computation, we fix the threshold $\theta$ at $\theta=0.7$.
{For the adaptive refinement, we employ the newest vertex bisection to subdivide the marked
triangles; see \cite{Mitchell:1989} for implementation details.} The discrete nonlinear optimization problems
\eqref{eqn:dispoly}-\eqref{eqn:discopt} are solved by a nonlinear conjugated gradient method, {
where the box constraints are enforced by pointwise projection into the admissible set $\mathcal{A}$ after each update}, and
the initial guess of the conductivity at the coarsest mesh $\cT_0$ is initialized to the background conductivity $\sigma_0=1$,
and then for $k=1,2,\ldots$, the recovery on the mesh $\cT_{k-1}$ is interpolated to the mesh $\cT_k$ to warm start the
(projected) conjugate gradient iteration for the discrete optimization problem on the mesh $\cT_k$. Throughout the
adaptive loop, the regularization parameter $\alpha$
in the model \eqref{eqn:tikh} is fixed and determined in a trial-and-error manner, {and the
chosen values of $\alpha$ in the experiments below are roughly of the order of the noise level $\delta$,
which is a popular a priori parameter choice;} see \cite{ItoJin:2014} for further discussions about parameter
choice. It is an interesting research question to adapt the choice of $\alpha$ with the a posterior estimator $\eta_k$
within the adaptive algorithm; see Remark \ref{rmk:practical-mark}.

\begin{example}\label{exam1}
The true conductivity $\sigma^\dag$ is given by $\sigma^\dag(x) = \sigma_0(x) + e^{-8(x_1^2+(x_2-0.55)^2)}$, with the background conductivity $\sigma_0(x)=1$.
\end{example}

In this example, the true conductivity $\sigma^\dag$ consists of a very smooth blob in a constant background,
and the profile is shown in Fig. \ref{fig:exam1-recon}(a). The final recovered conductivity fields from the voltage
measurements with $\epsilon=0.1\%$ data noise are shown in Fig. \ref{fig:exam1-recon}. For both uniform and adaptive
refinements, the recoveries capture well the location and height of the blob: it is very smooth,
due to the use of a smoothness prior. Hence, it does not induce any grave solution singularity. The
recoveries by both methods are similar to each other in terms of location
and magnitude. {Both suffer from a slight loss of the contrast,
which is typical for EIT recoveries with a smoothness penalty; see, e.g.,
\cite{LechleiterRieder:2006} and \cite{WinklerRieder:2014} for similar results by an iteratively regularized Gauss-Newton method.}

\vspace{-.2cm}
\begin{figure}[hbt!]
  \centering \setlength{\tabcolsep}{0em}
  \begin{tabular}{ccc}
   \includegraphics[trim = 0.5cm 0cm 1cm 0cm, clip=true,width=0.333\textwidth]{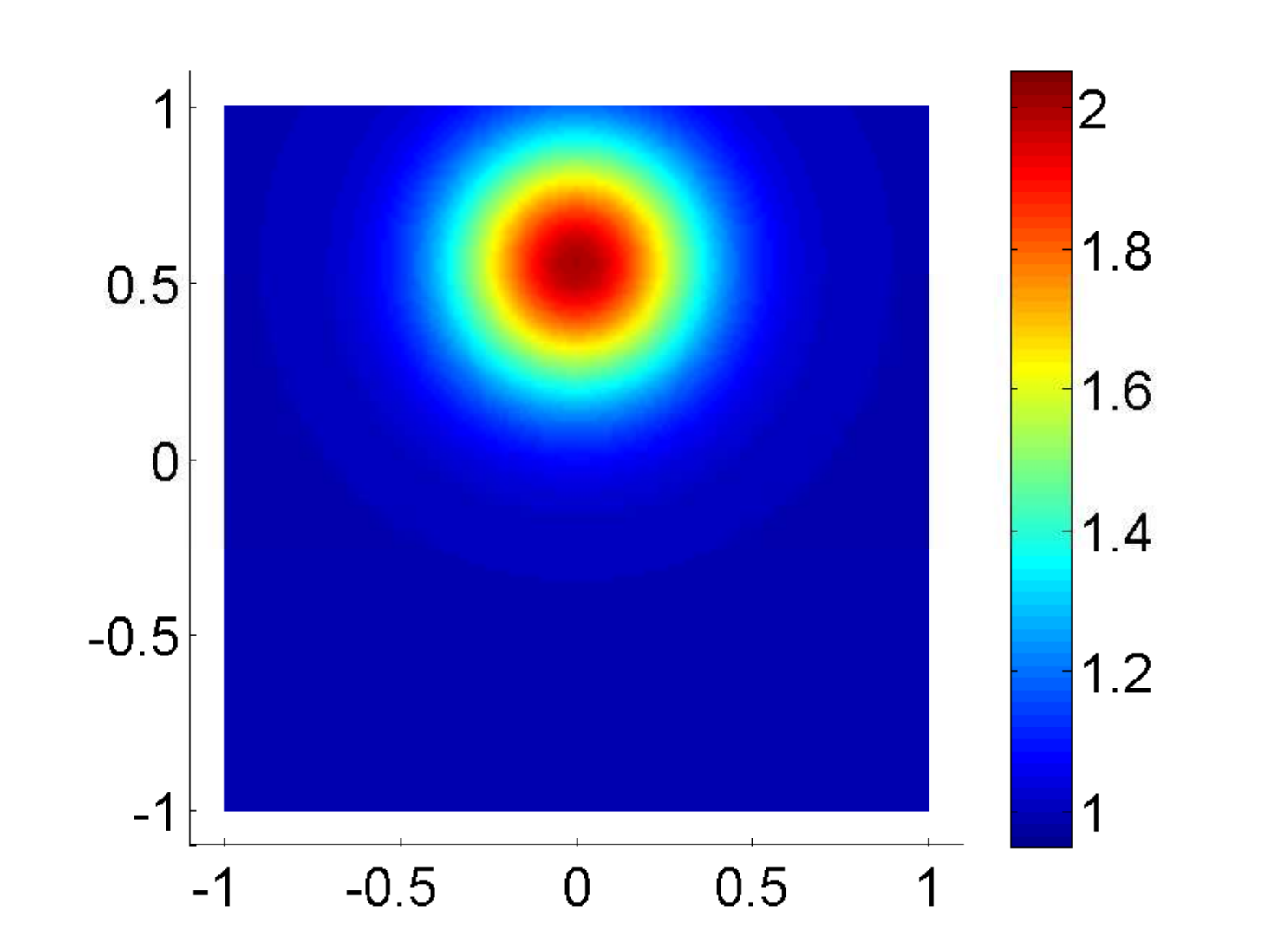} &
   \includegraphics[trim = 0.5cm 0cm 1cm 0cm, clip=true,width=.333\textwidth]{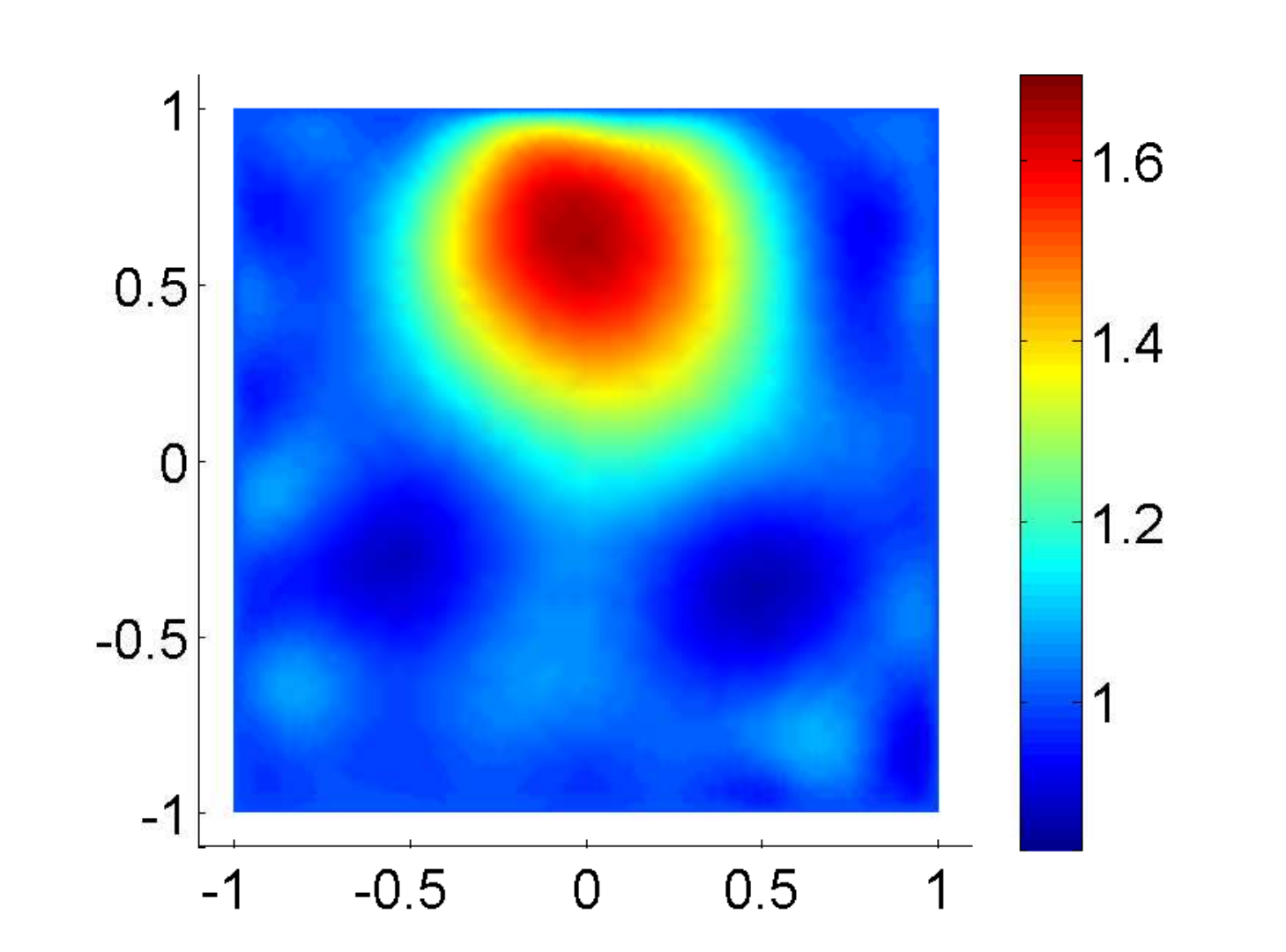} &
   \includegraphics[trim = .5cm 0cm 1cm 0cm, clip=true,width=.333\textwidth]{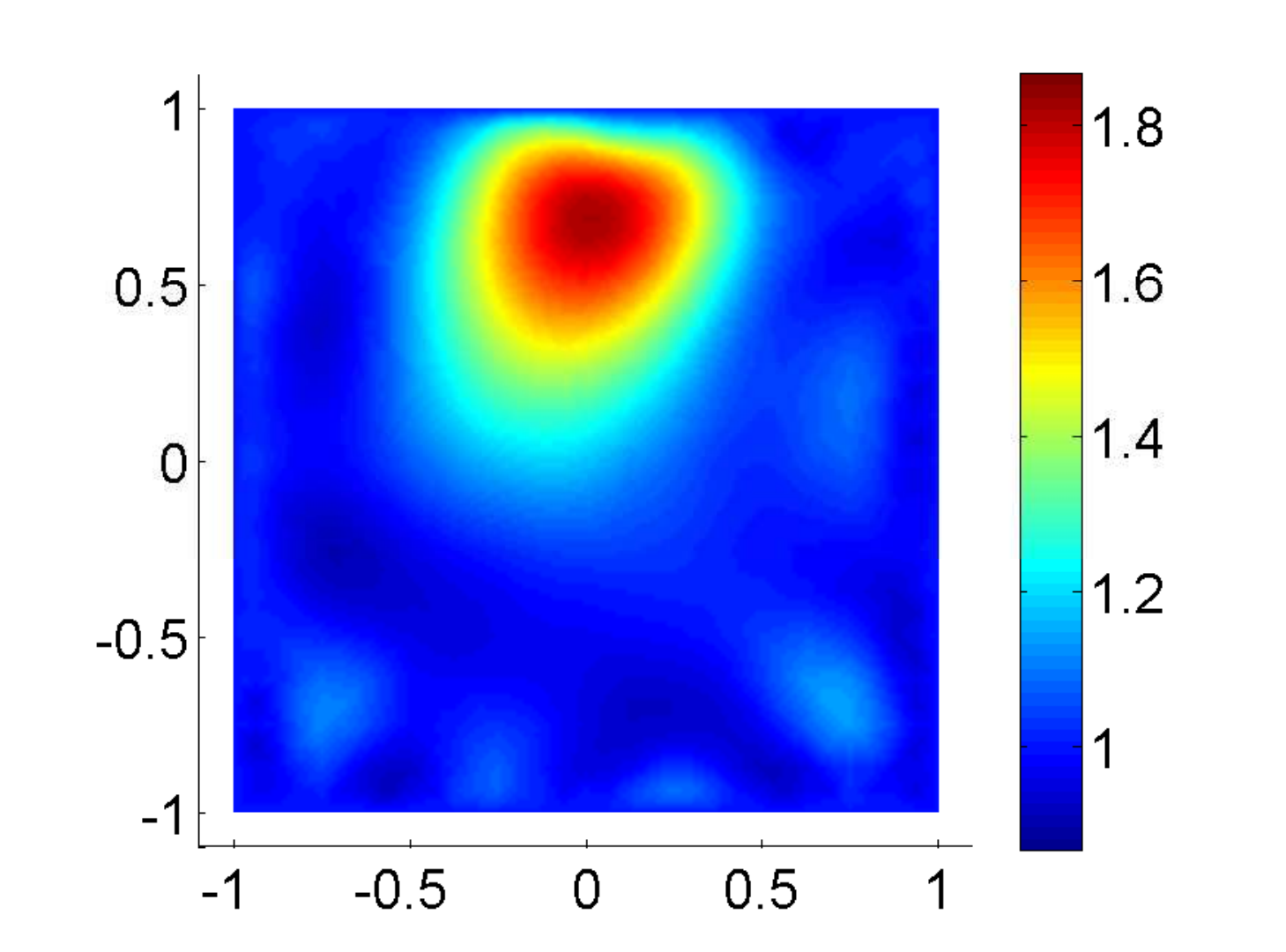} \\
 (a) true conductivity & (b) adaptive refinement & (c) uniform refinement
  \end{tabular}
  \caption{The final reconstructions by the uniform and adaptive refinements for Example
  \ref{exam1} with $\epsilon=0.1\%$ noise in the data. The degree of freedom is $9818$ and $ 16641$
  for the adaptive and uniform refinement, respectively. The regularization parameter $\alpha$ is fixed at
  $\alpha=2.5\times10^{-4}$.}\label{fig:exam1-recon}
\end{figure}

Next we examine the adaptive refinement more closely. On a very coarse initial mesh $\cT_0$, which is a uniform
triangulation of the domain $\Omega$, cf. Fig. \ref{fig:exam1-recon-iter}(a), the recovered conductivity tends
to have pronounced oscillations around the boundary, since the forward solution is not accurately resolved over there. In
particular, the discretization error significantly compromises the reconstruction accuracy, and it induces large errors in
the location and height of the recovered conductivity. This motivates the use of the adaptive strategy. The meshes during
the adaptive iteration and the corresponding recoveries are shown in Fig. \ref{fig:exam1-recon-iter}. The refinement
step first concentrates only on the region around the electrode surface. This is attributed to the change of the
boundary condition, which induces weak singularities in the direct and adjoint solutions. Then the AFEM starts to refine
also the interior of the domain, simultaneously with the boundary region. Accordingly, the spurious oscillations
in the recovery are suppressed as the iteration proceeds (provided that the regularization parameter
$\alpha$ is properly chosen). Interestingly, the central part of the domain $\Omega$ is refined only slightly during the
whole refinement procedure, and in the end, much coarse elements are used for the conductivity inversion in these regions.
This concurs with the empirical observation that the inclusion in the central part is much harder to resolve from the
boundary data. Hence, the adaptive algorithm tends to adapt automatically to the resolving power of the conductivity
(from the boundary data) in different regions.

In Fig. \ref{fig:exam1-efficiency}, we plot the $L^2(\Omega)$ and $H^1(\Omega)$ errors of the recoveries
versus the degree of freedom $N$ of the mesh $\cT_k$ for the adaptive and uniform refinement, where
the recovery on the finest mesh is taken as a respective reference solution, since the recoveries by
the uniform and adaptive refinements are not necessarily the same (although always close), even initialized
identically. The corresponding empirical convergence rates in $L^2(\Omega)$-norms and $H^1(\Omega)$-norms are given in Table \ref{tab:convrate}. It is observed
that with the same degree of freedom, the AFEM can give much more
accurate results than the uniform one (with respect to the respective reference solution). This is also
confirmed by the computing time: for the results in Fig. \ref{fig:exam1-recon}, the one by the
adaptive refinement takes about 30 minutes, whereas that by the uniform refinement takes about 80 minutes. This
is consistent with the fact that at each iteration of the algorithm, the module \texttt{SOLVE} is
predominant, and that the computational cost of the conjugate gradient descent
algorithm is proportional to the number of forward and adjoint solves at each iteration and
each forward/adjoint solve is determined by the degree of freedom of the system.
This shows clearly the computational efficiency of the proposed adaptive algorithm.

\begin{figure}[hbt!]
 \centering
 \setlength{\tabcolsep}{1pt}
 \begin{tabular}{ccccc}
 \includegraphics[trim = 0cm 0cm 0cm 0cm, width=.23\textwidth]{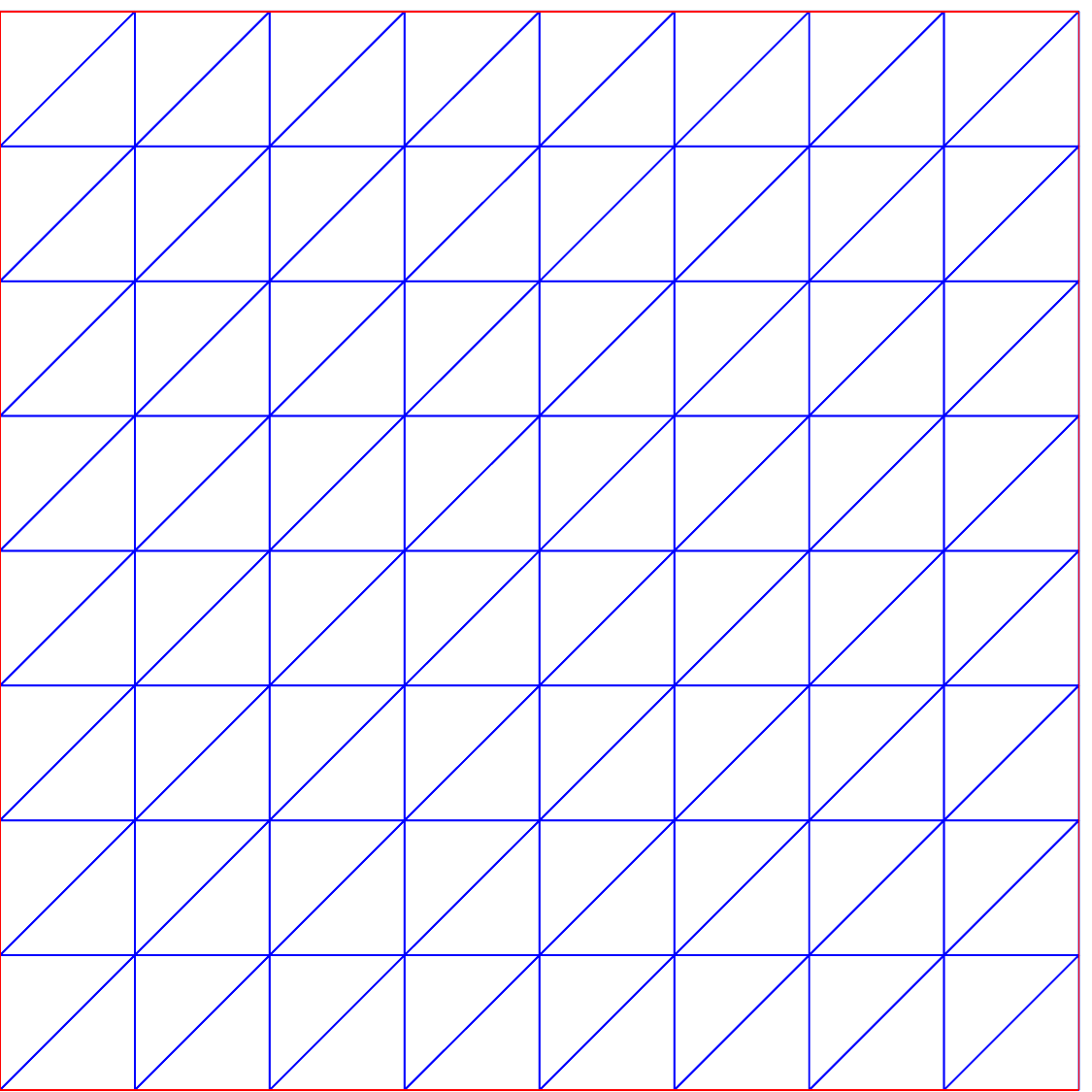}&
 \includegraphics[trim = 0cm 0cm 0cm 0cm, width=.23\textwidth]{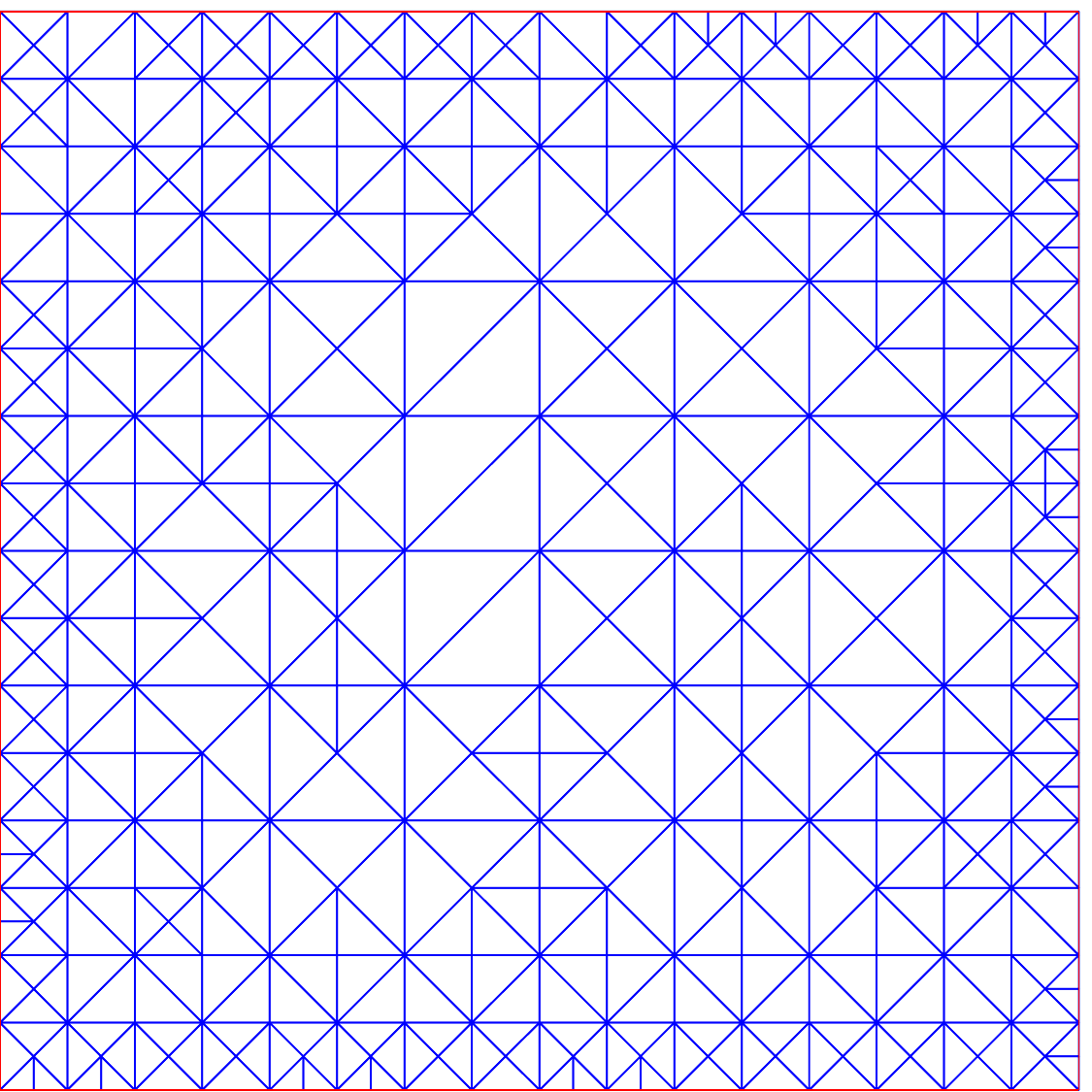}&
 \includegraphics[trim = 0cm 0cm 0cm 0cm, width=.23\textwidth]{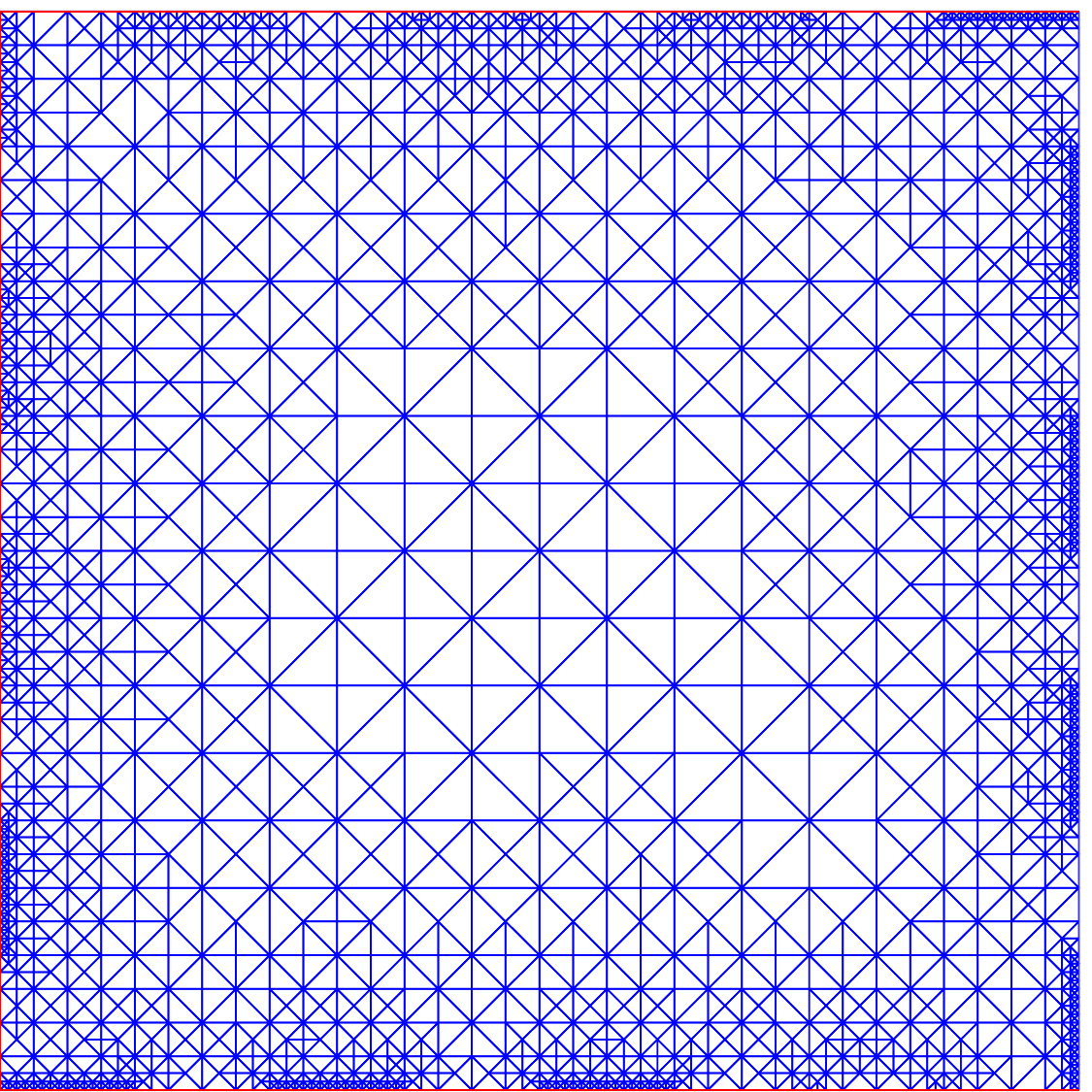}&
 \includegraphics[trim = 0cm 0cm 0cm 0cm, width=.23\textwidth]{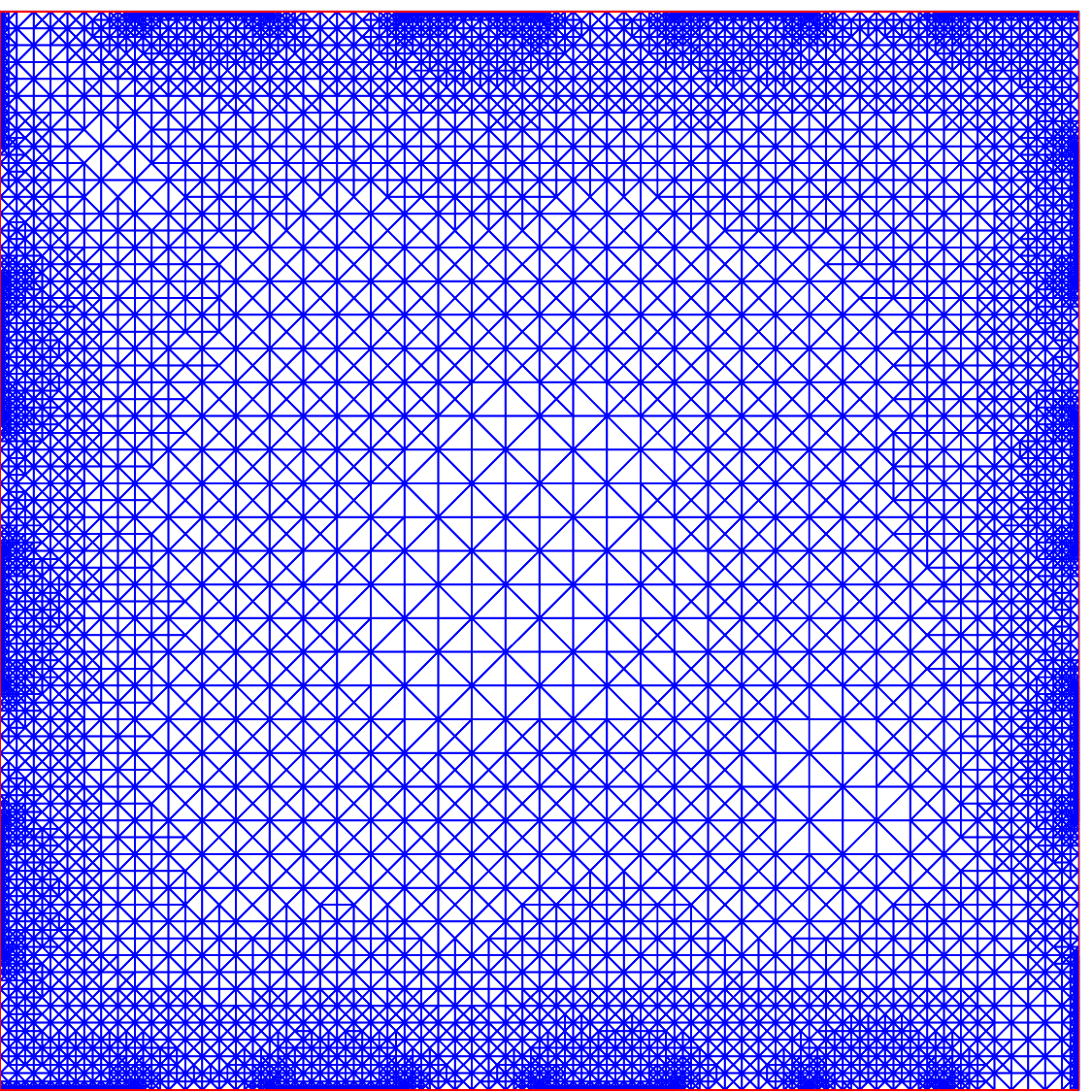}\\
  \includegraphics[trim = 1cm 0.5cm 0.5cm 0cm, width=.24\textwidth]{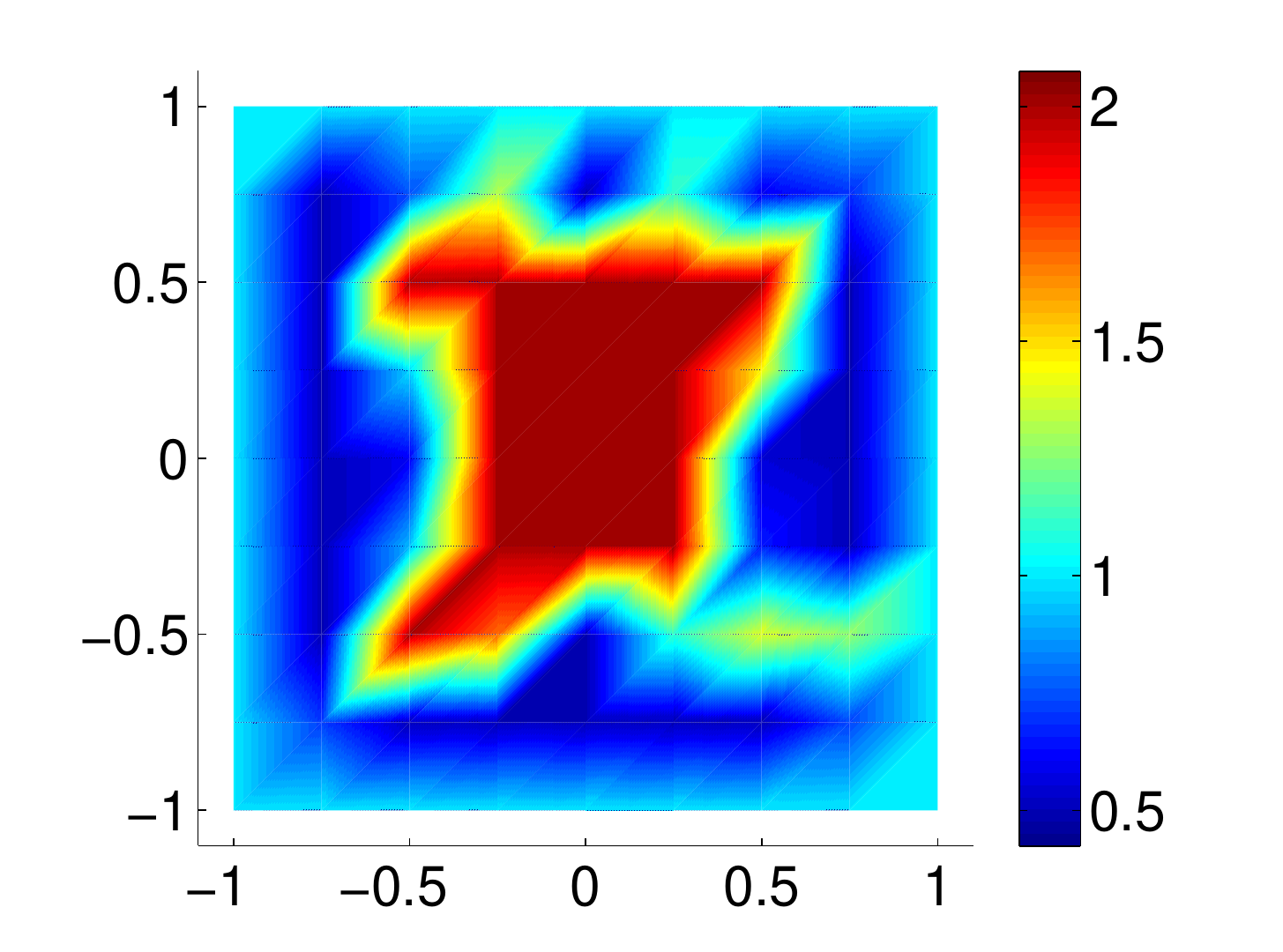} &
  \includegraphics[trim = 1cm 0.5cm 0.5cm 0cm, width=.24\textwidth]{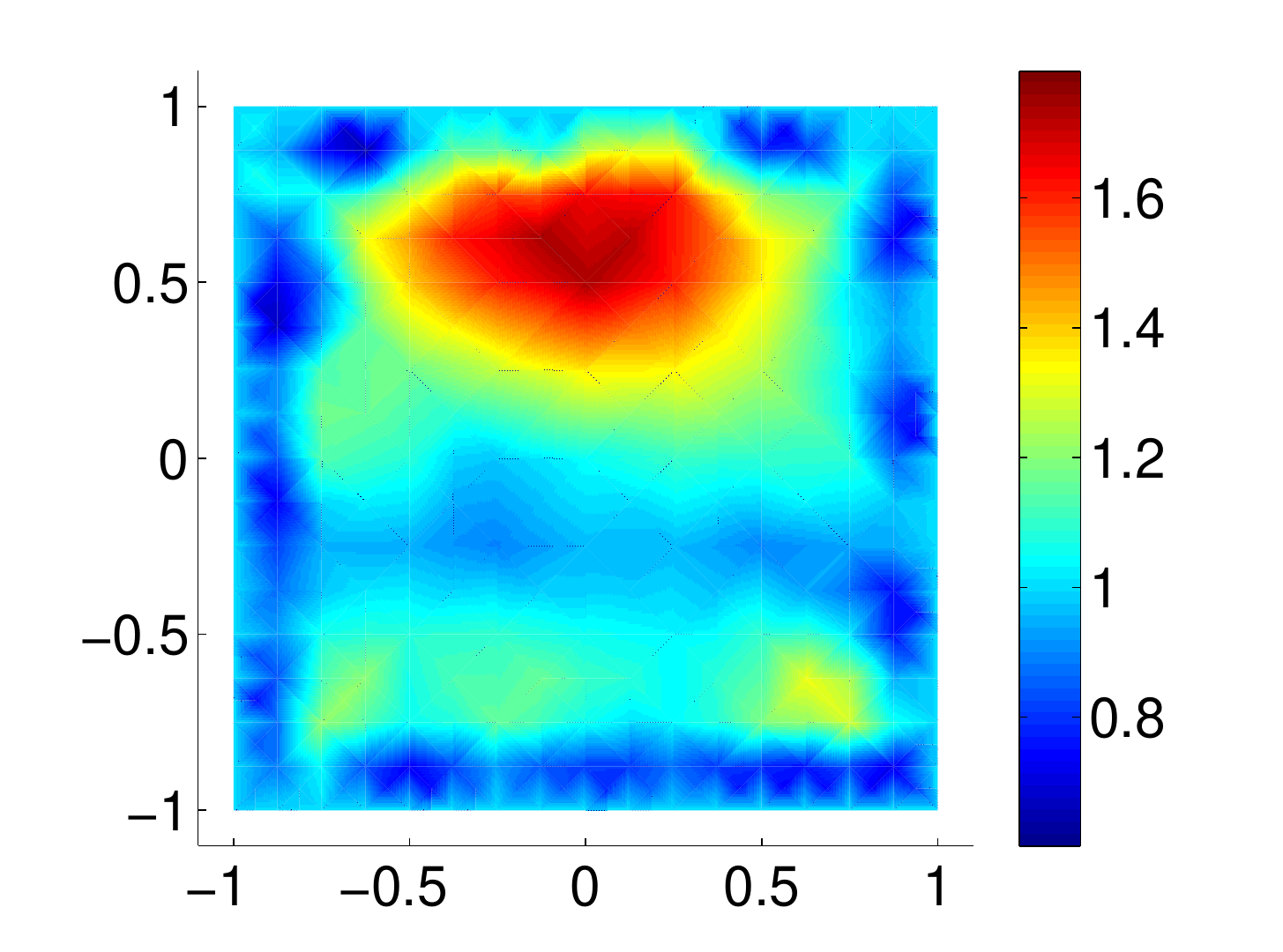} &
  \includegraphics[trim = 1cm 0.5cm 0.5cm 0cm, width=.24\textwidth]{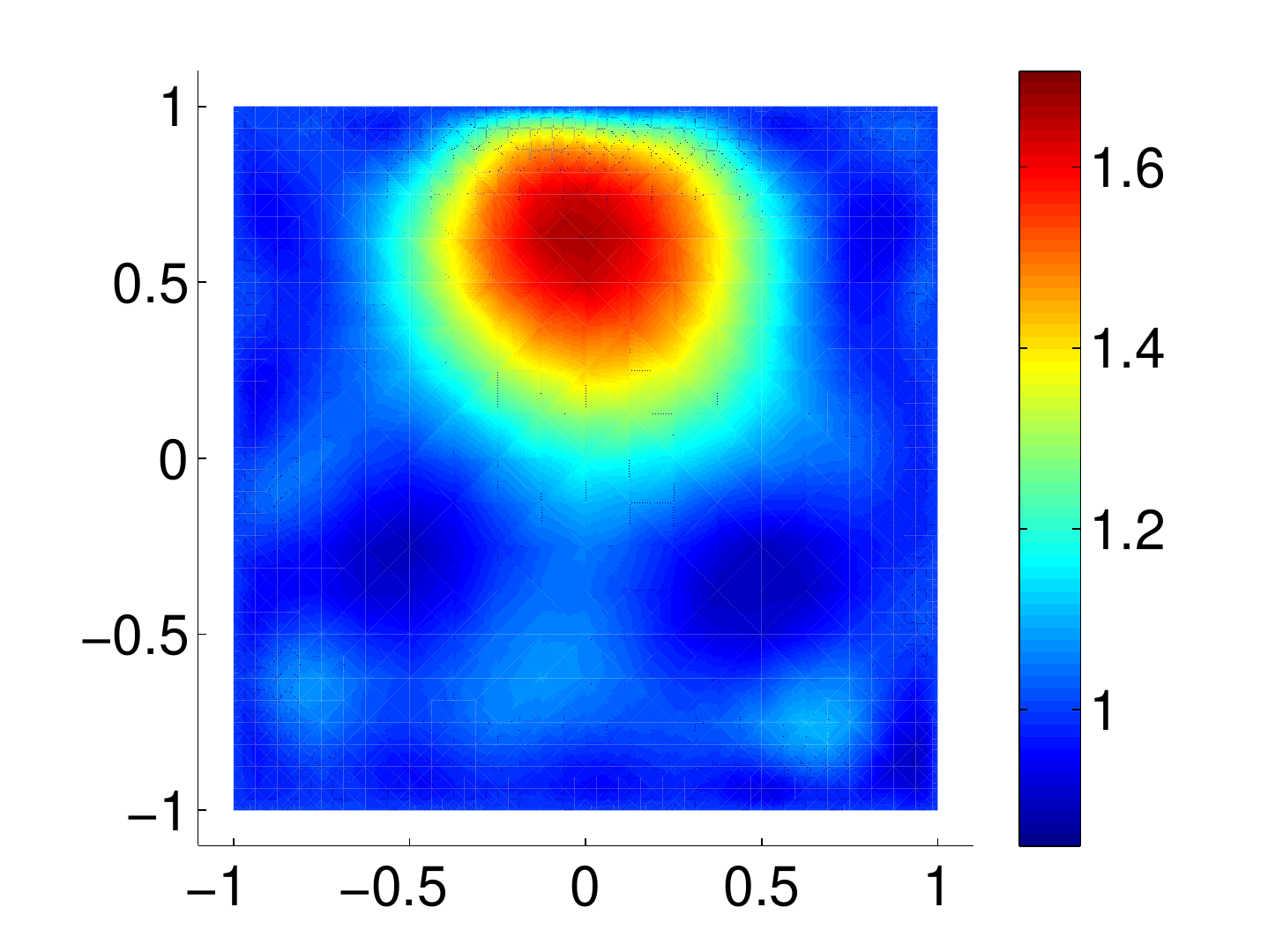} &
  \includegraphics[trim = 1cm 0.5cm 0.5cm 0cm, width=.24\textwidth]{ex1_1e3ad_iter15}\\
    (a) 0th step & (b) 4th step & (c) 9th step & (d) 14th step
 \end{tabular}
 \caption{The recovered conductivity distributions $\sigma_k^*$ during the adaptive refinement, for Example \ref{exam1} with $\epsilon=0.1\%$ noise.
 The regularization parameter $\alpha$ is fixed at $\alpha = 2.5\times10^{-4}$.}\label{fig:exam1-recon-iter}
\end{figure}

\begin{figure}[hbt!]
  \centering
  \begin{tabular}{cc}
    \includegraphics[trim = 1cm 0cm 1.5cm 0cm, clip=true,width=.35\textwidth]{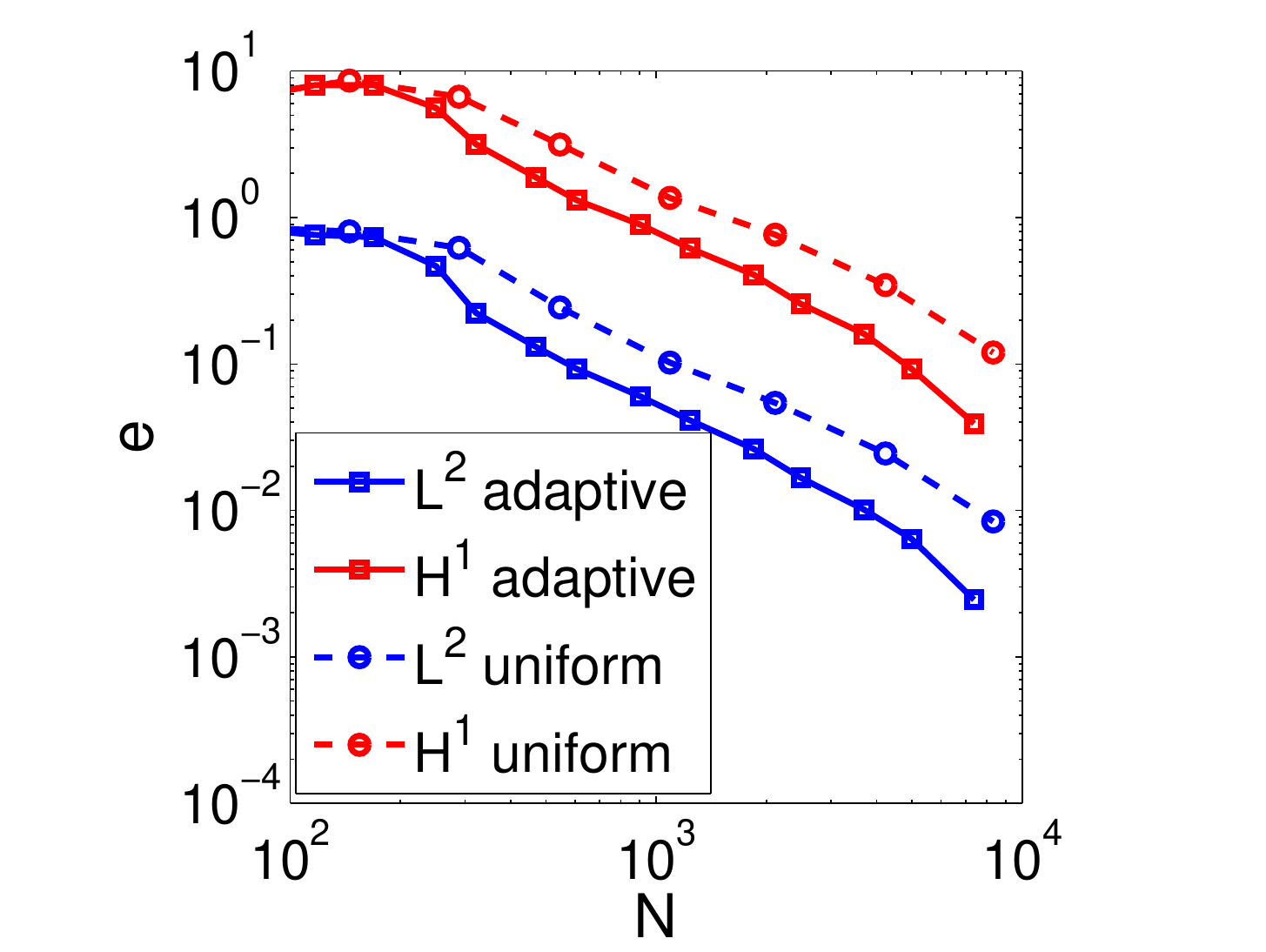}
    & \includegraphics[trim = 1cm 0cm 1.5cm 0cm, clip=true,width=.35\textwidth]{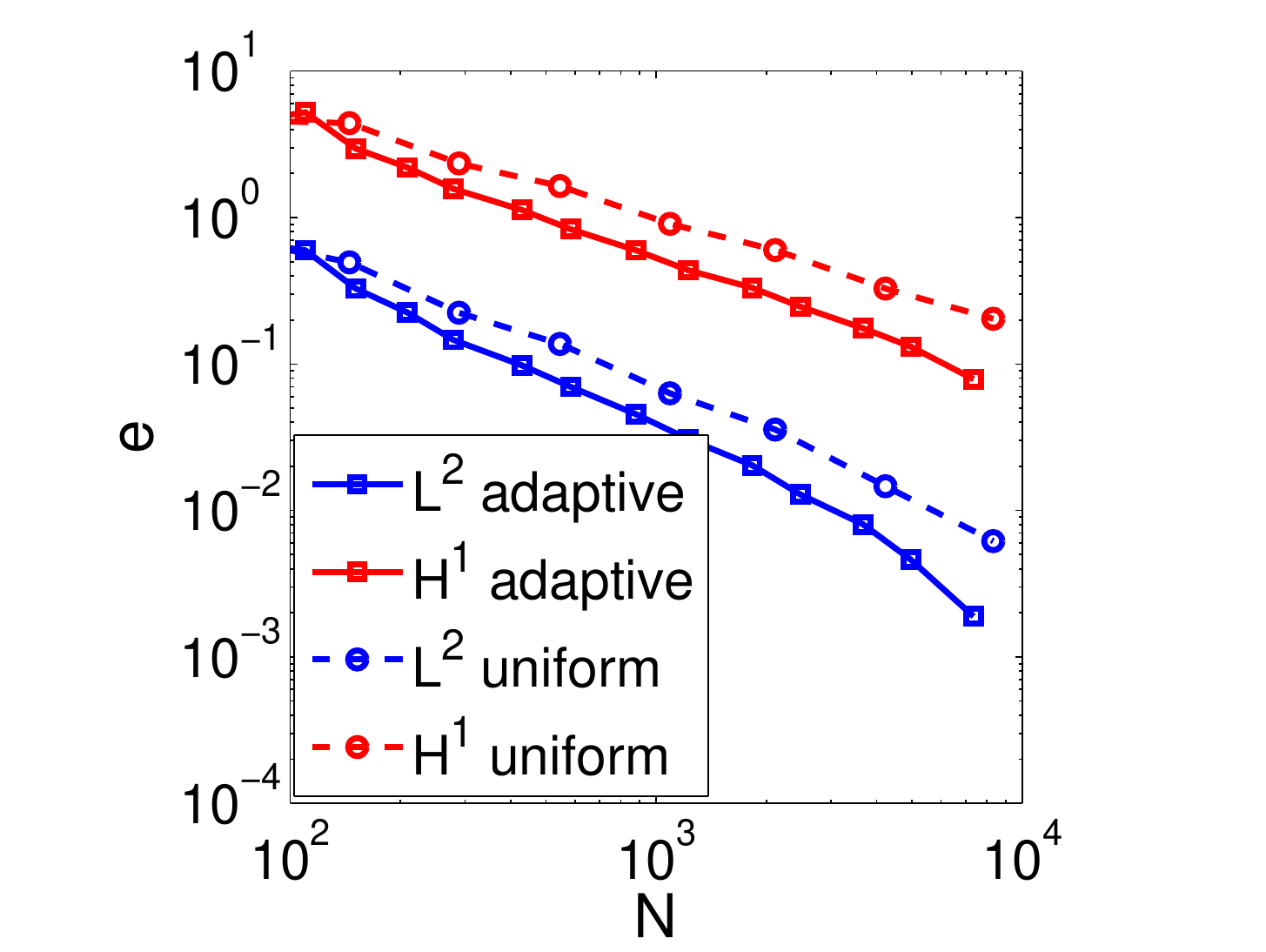}\\
    (a) $\epsilon=1\times10^{-3}$ & (b) $\epsilon=1\times10^{-2}$
  \end{tabular}
  \caption{The $L^2(\Omega)$ and $H^1(\Omega)$ errors versus the degree of freedom $N$ of the mesh, for Example \ref{exam1} at two different noise levels, using the adaptive refinement (solid line) and uniform refinement (dashed line).}\label{fig:exam1-efficiency}
\end{figure}

\begin{table}[hbt!]
  \caption{The empirical convergence rates $O(N^{-r})$, $N$ being the degree of freedom, of the recoveries in the $L^2(\Omega)$- and $H^1(\Omega)$-norms, for the numerical examples, where the exponent $r$ is presented.\label{tab:convrate}}
  \centering \setlength{\tabcolsep}{9pt}
  \begin{tabular}{|c|cc|cc|cc|cc|}
  \hline
   & \multicolumn{4}{|c|}{$\epsilon$=1e-3}& \multicolumn{4}{c|}{$\epsilon$=1e-2} \\
   \cline{2-5} \cline{6-9}
   Example  & \multicolumn{2}{|c|}{adaptive}& \multicolumn{2}{|c|}{uniform}& \multicolumn{2}{|c|}{adaptive}& \multicolumn{2}{|c|}{uniform}\\
   \cline{2-9}
            & $L^2$ & $H^1$ & $L^2$ & $H^1$ & $L^2$ & $H^1$ & $L^2$ & $H^1$\\
   \hline
   \ref{exam1} &  1.31 & 1.19 & 1.04 & 0.93  & 1.23  & 0.91 & 1.01  & 0.70 \\
   \ref{exam2} &  1.32 & 1.19 & 1.05 & 0.94 & 1.23 & 0.88 & 0.99 & 0.73\\
   \ref{exam3} &  1.08 & 0.88 & 0.83 & 0.73 & 0.91 & 0.67 & 0.72 & 0.40\\
   \hline
  \end{tabular}
\end{table}

A second example contains two neighboring smooth blobs.
\begin{example}\label{exam2}
The true conductivity $\sigma^\dag$ is given by $\sigma^\dag(x) = \sigma_0(x)
 +  e^{-20((x_1+0.7)^2+x_2^2)} + e^{-20(x_1^2+(x_2-0.7)^2)}$, and the background
conductivity $\sigma_0(x)=1$.
\end{example}

Like before, the true conductivity $\sigma^\dag$ is smooth (cf. Fig. \ref{fig:exam2-recon}(a) for the profile), and thus
the smoothness penalty is suitable. Overall, the observations from Example \ref{exam1} remain valid: the
recovered coefficient captures very well the supports of the inclusions, and the magnitude is also reasonable.
The recovery by the adaptive algorithm is comparable with that based on uniform one, but requiring
far less degrees of freedom. However, due to the smoothing nature of the $H^1(\Omega)$ penalty,
the recoveries tend to be diffusive, and the magnitude also suffers from a loss of about $20\%$ for
both uniform and adaptive refinements. Such smoothing is well-known in EIT imaging.
These drawbacks can be partially alleviated by sparsity-promoting
penalty \cite{JinKhanMaass:2012,JinMaass:2012}, to which it is of great interest to extend the proposed AFEM.

\begin{figure}[hbt!]
  \centering\setlength{\tabcolsep}{0em}
  \begin{tabular}{ccc}
    \includegraphics[trim = .5cm 0cm 1cm 0cm,clip=true,width=0.33\textwidth]{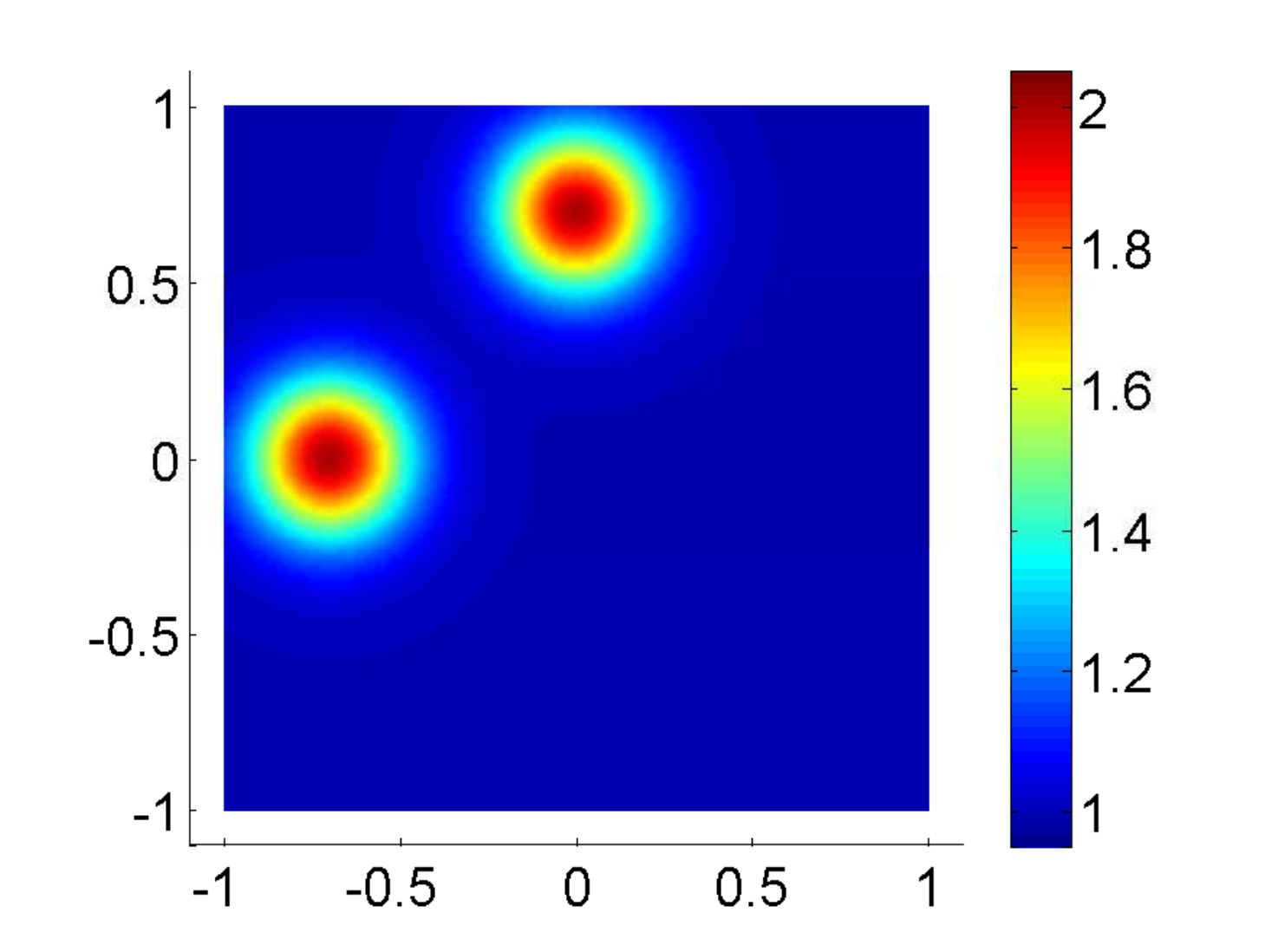}
    & \includegraphics[trim = .5cm 0cm 1cm 0cm, clip=true,width=.33\textwidth]{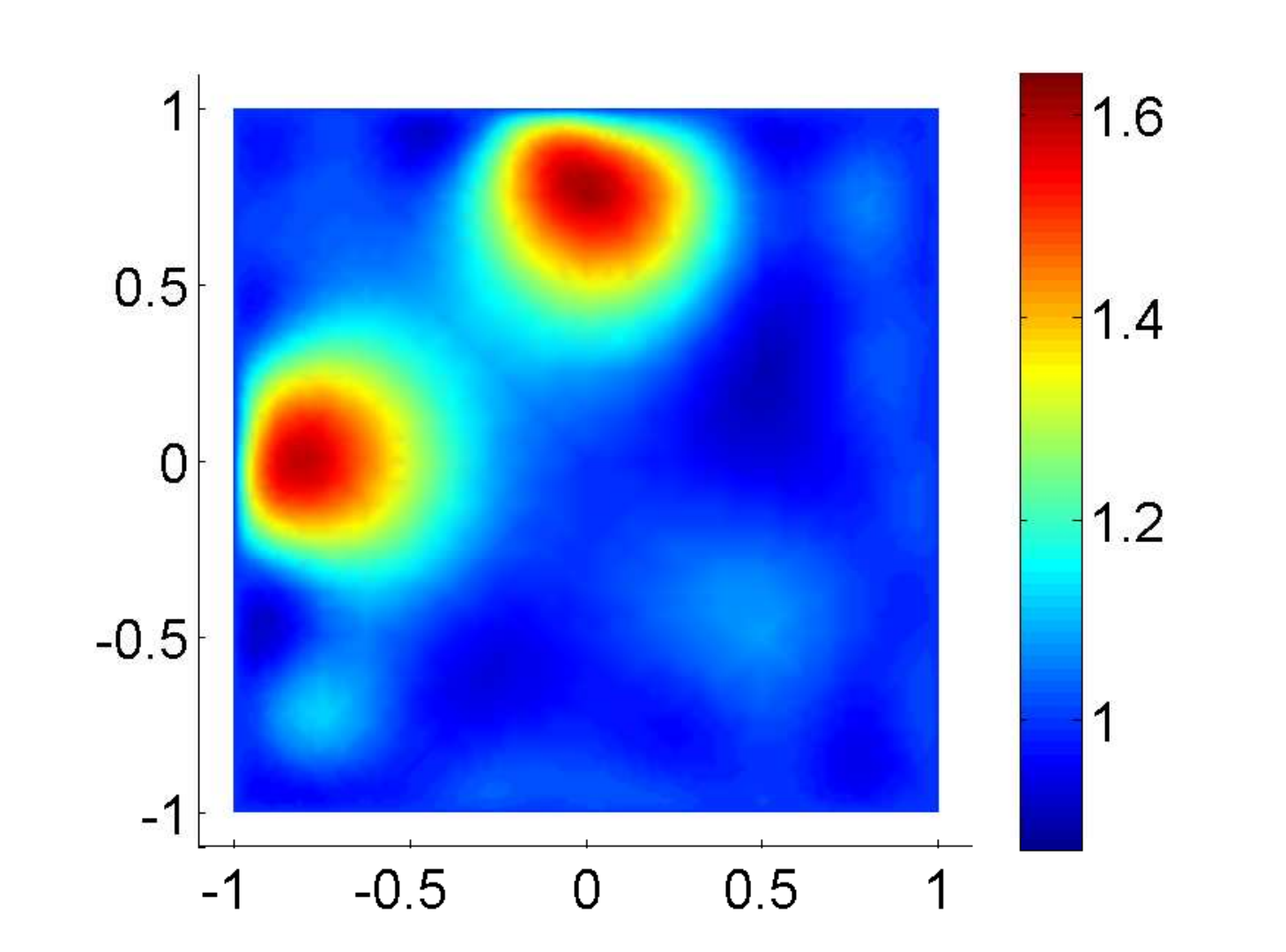}
    & \includegraphics[trim = .5cm 0cm 1cm 0cm, clip=true,width=.33\textwidth]{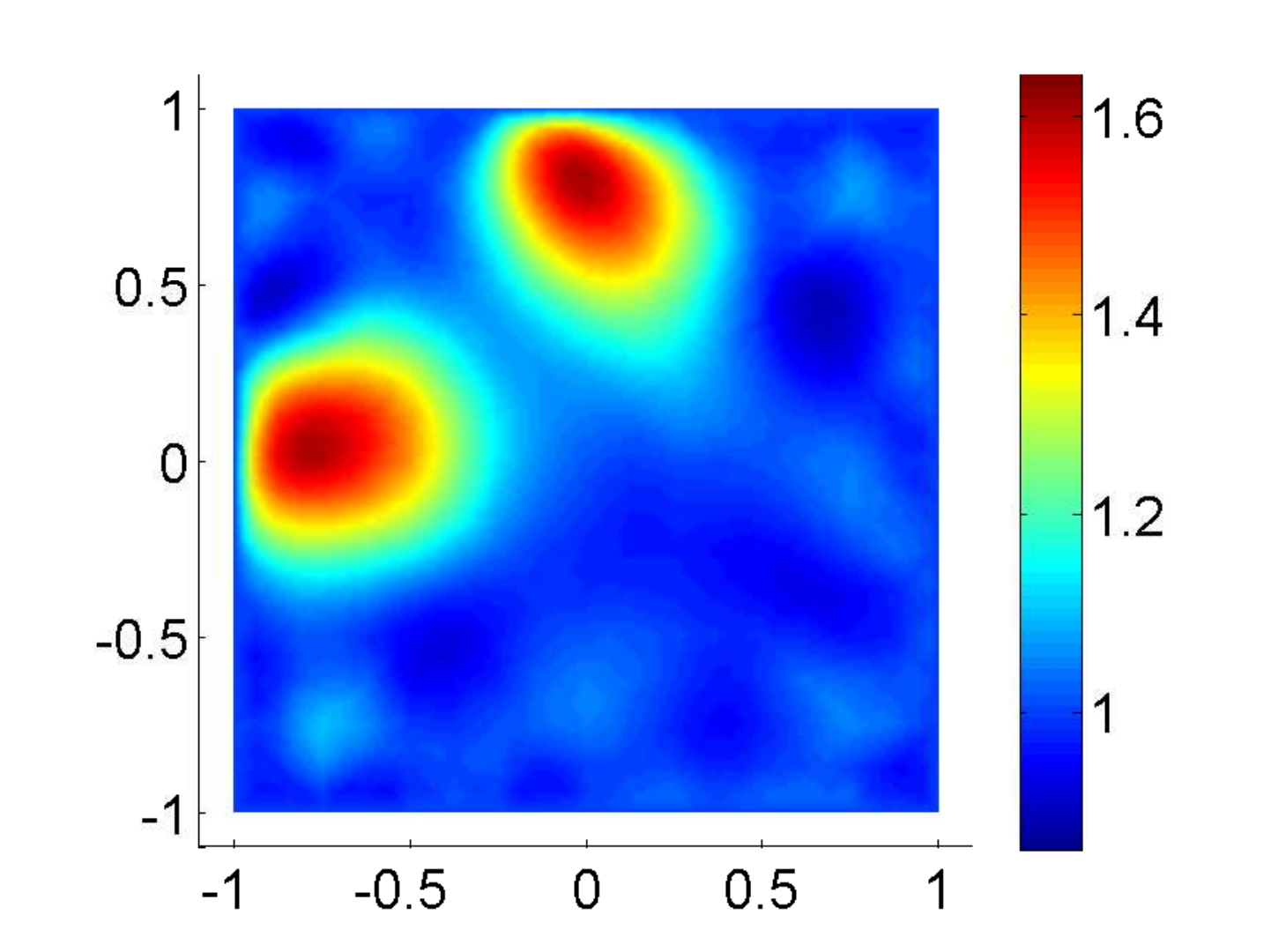}\\
    (a) true conductivity & (b) adaptive refinement & (c) uniform refinement
  \end{tabular}
  \caption{The final reveries by the adaptive and uniform refinements for Example
  \ref{exam2} with $\epsilon=0.1\%$ noise. The degree of freedom is $9803$ and $16641$
  for the adaptive and uniform refinement, respectively. The regularization parameter $\alpha$
  is fixed at $\alpha=2.5\times10^{-4}$.}\label{fig:exam2-recon}
\end{figure}

\begin{figure}[hbt!]
 \centering
 \setlength{\tabcolsep}{1pt}
 \begin{tabular}{ccccc}
 \includegraphics[trim = 0cm 0cm 0cm 0cm, width=.23\textwidth]{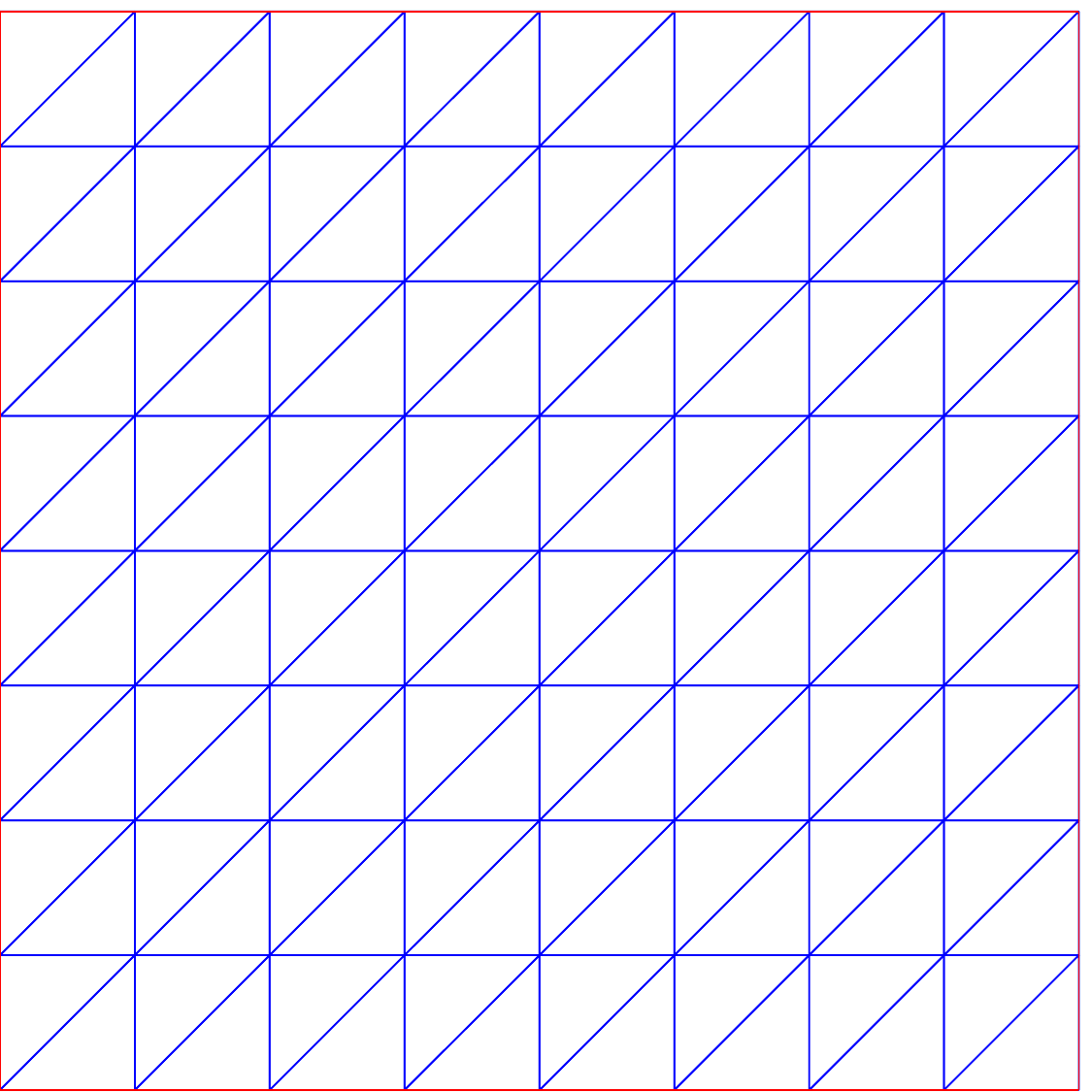}&
 \includegraphics[trim = 0cm 0cm 0cm 0cm, width=.23\textwidth]{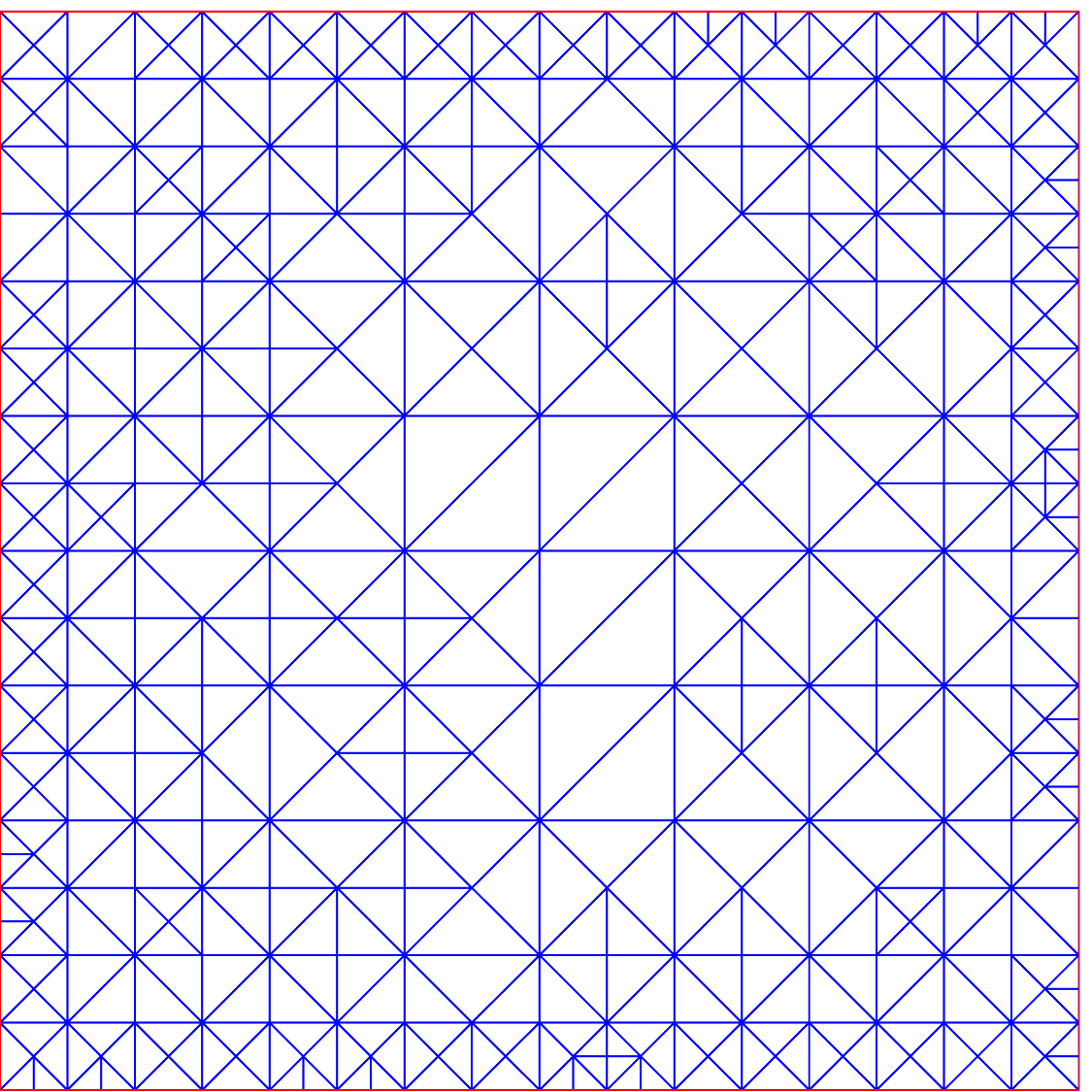}&
 \includegraphics[trim = 0cm 0cm 0cm 0cm, width=.23\textwidth]{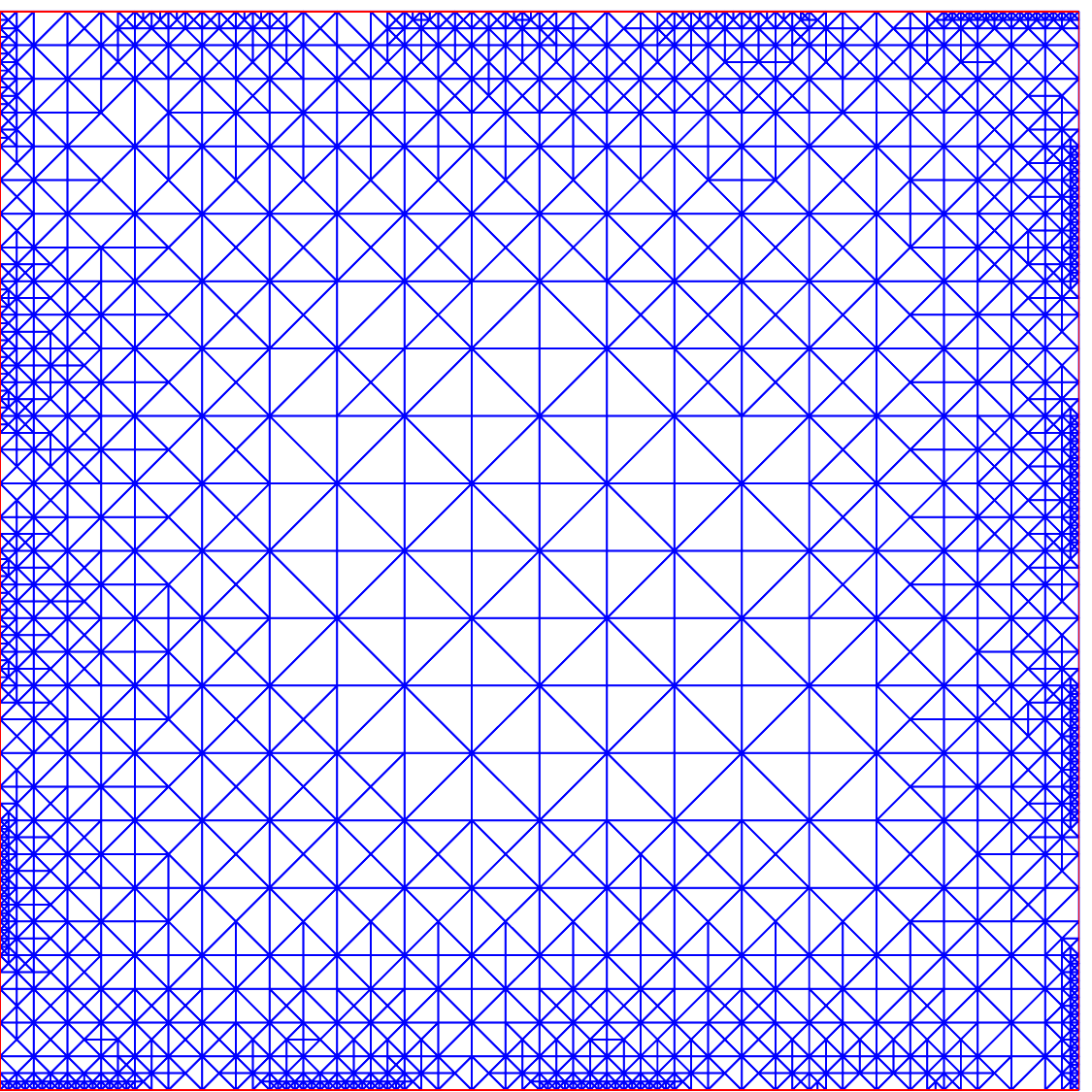}&
 \includegraphics[trim = 0cm 0cm 0cm 0cm, width=.23\textwidth]{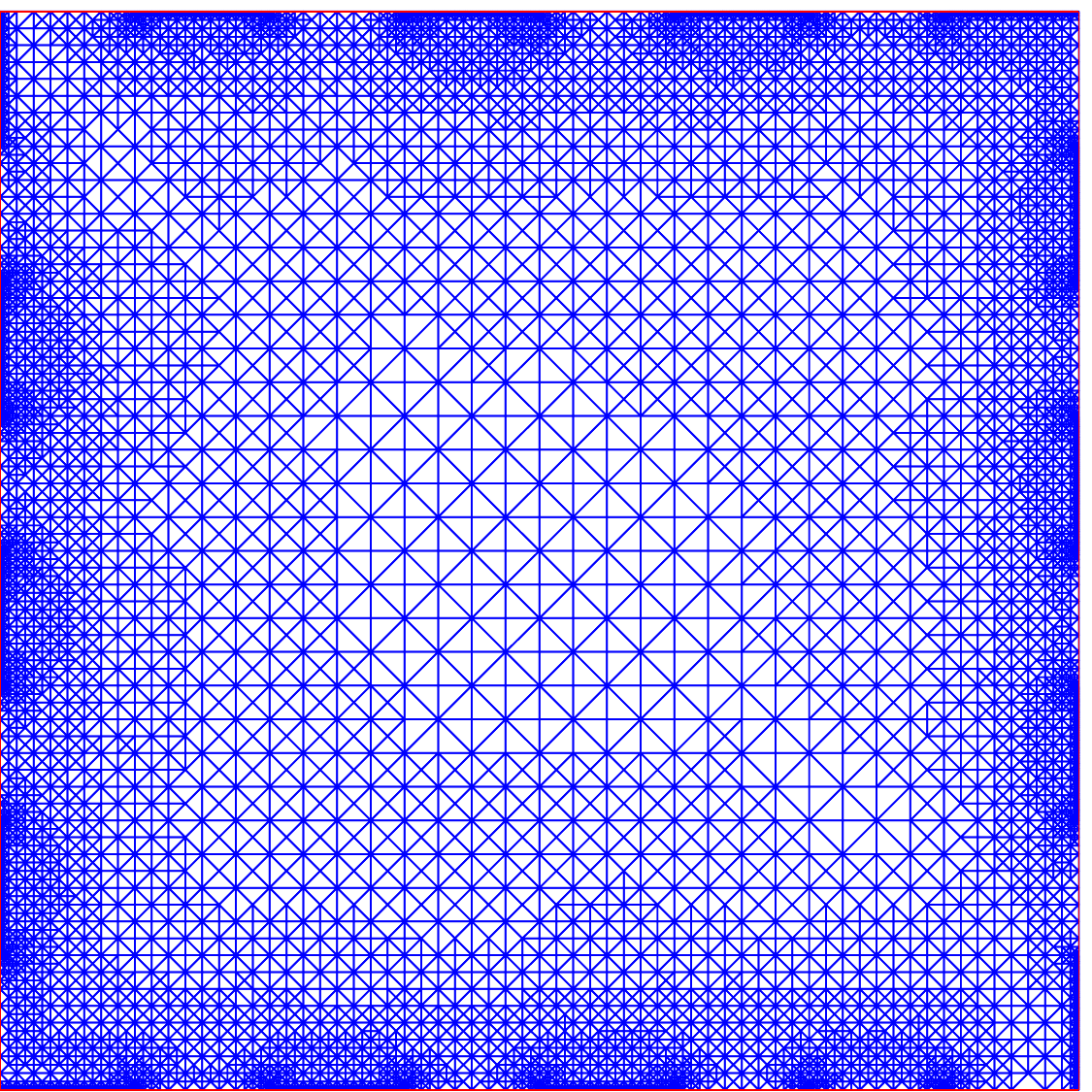}\\

  \includegraphics[trim = 1cm 0.5cm 0.5cm 0cm, width=.24\textwidth]{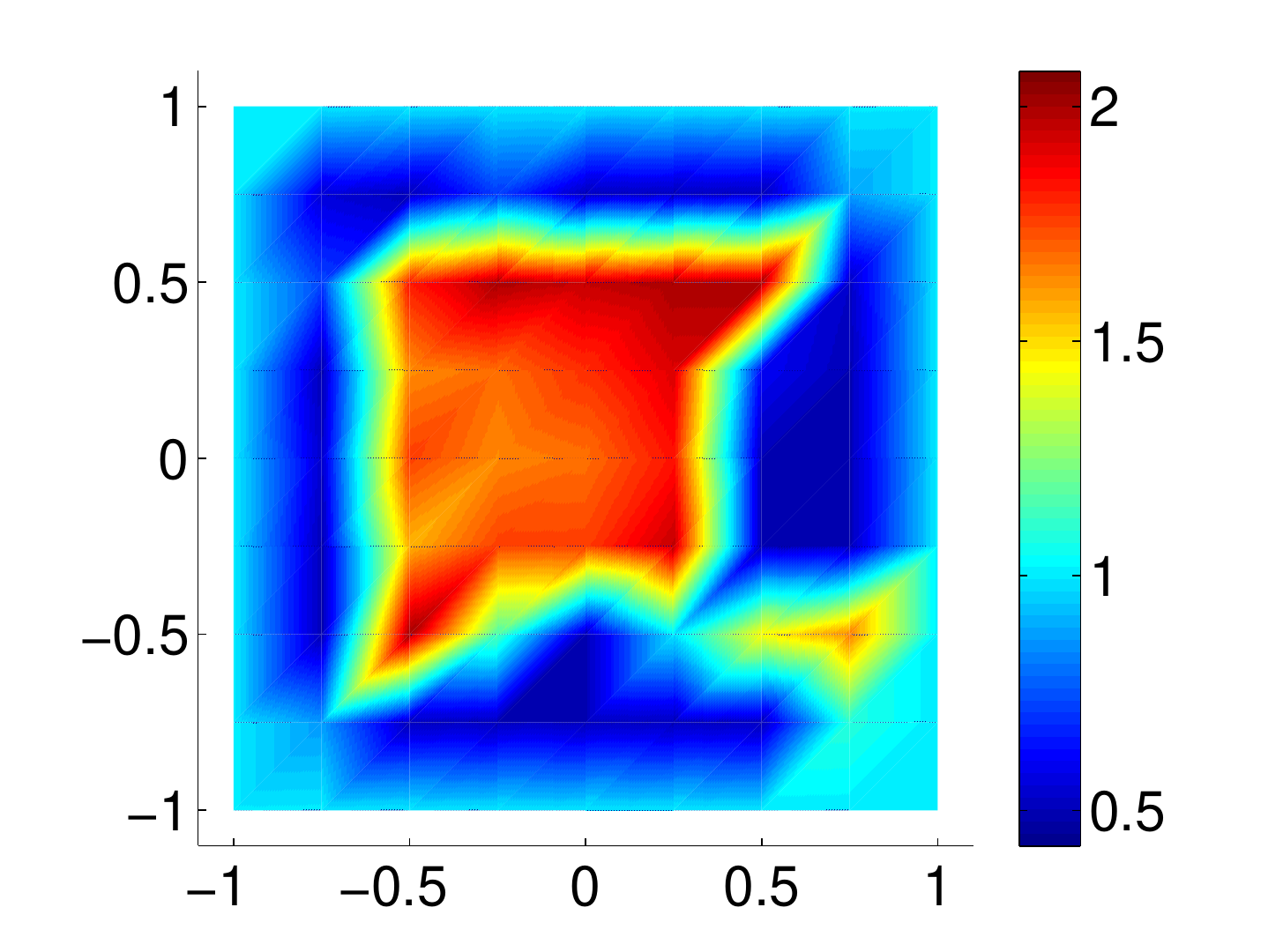} &
  \includegraphics[trim = 1cm 0.5cm 0.5cm 0cm, width=.24\textwidth]{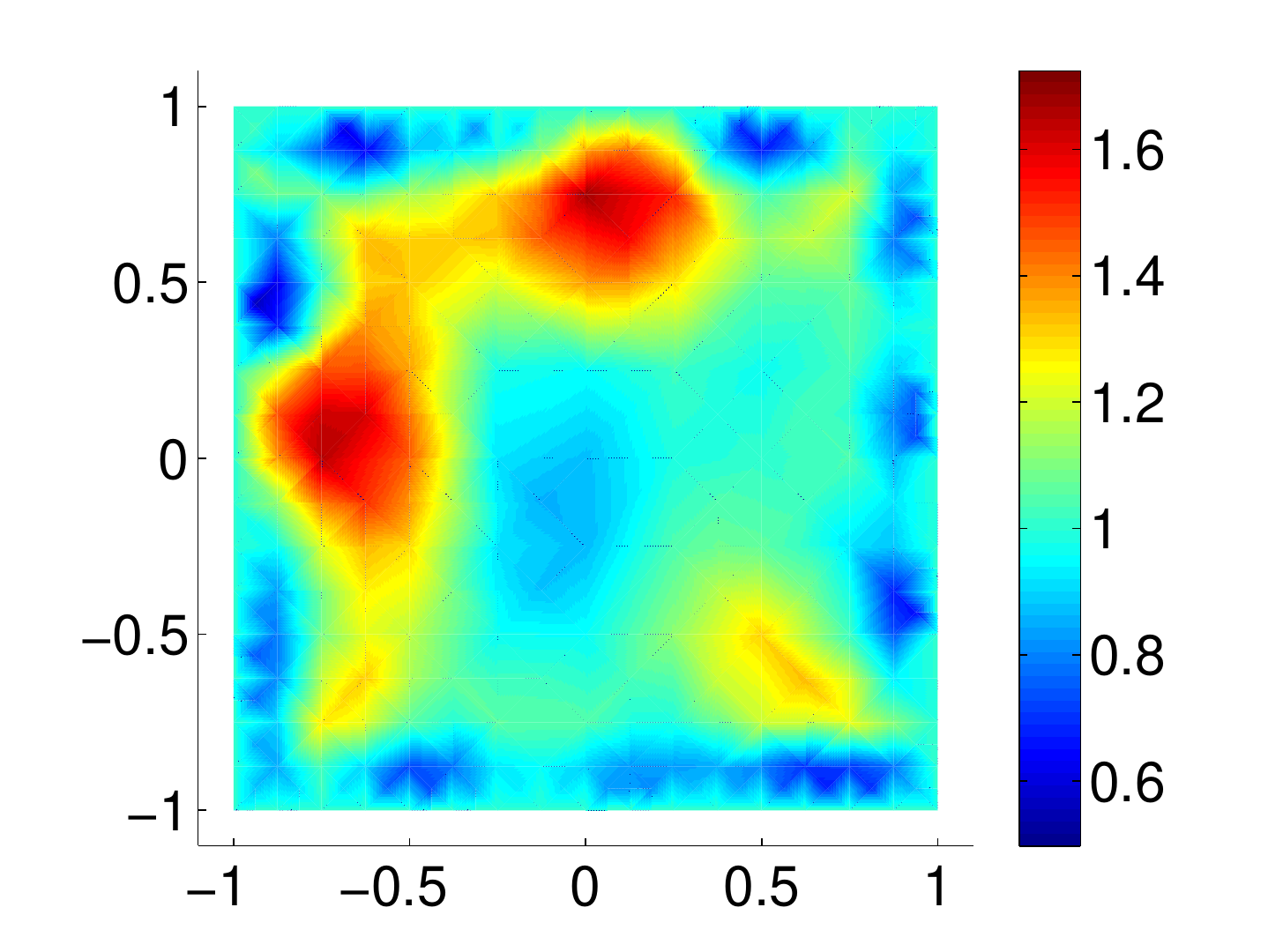} &
  \includegraphics[trim = 1cm 0.5cm 0.5cm 0cm, width=.24\textwidth]{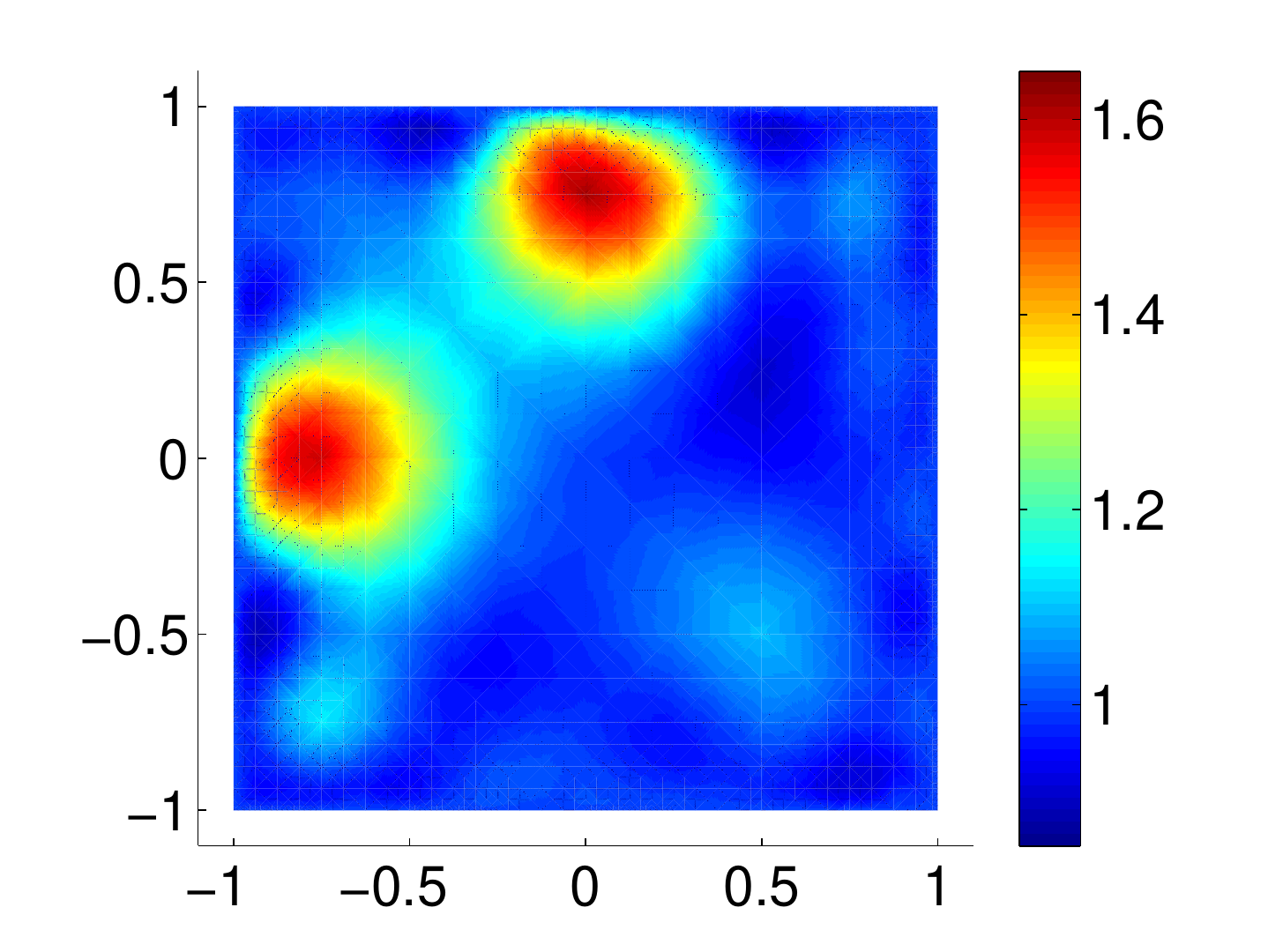} &
  \includegraphics[trim = 1cm 0.5cm 0.5cm 0cm,width=.24\textwidth]{ex2_1e3ad_iter15}\\
    (a) 0th step & (b) 4th step & (c) 9th step & (d) 14th step
 \end{tabular}
 \caption{The recovered conductivity during the adaptive refinement, for Example \ref{exam2} with
 $\epsilon=0.1\%$ noise. The regularization parameter is fixed at $\alpha=2.5\times10^{-4}.$} \label{fig:exam2-recon-iter}
\end{figure}

We plot in Fig. \ref{fig:exam2-recon-iter} the meshes and recoveries at the intermediate refinement steps.
At the initial stage, the refinement mainly occurs in the region around electrode surfaces, where the weak solution
singularity appears. As the refinement proceeds, the region away from the boundary is also refined, but to a much
lesser degree, especially for the central part of the domain. In case of a very coarse initial mesh, the recovery
even fails to correctly identify the number of inclusions, but as the AFEM proceeds, the spurious oscillations disappear, and
then it can identify reasonably the locations and magnitudes of the blobs from the recoveries, cf. Fig.
\ref{fig:exam2-recon-iter}. In Fig. \ref{fig:exam2-efficiency}, we show the $L^2(\Omega)$ and $H^1(\Omega)$ errors of
the recoveries versus the degree of freedom $N$ of the mesh $\cT_k$ for the adaptive and uniform refinement.
These plots fully show the efficiency of the adaptive algorithm, for both $\epsilon=0.1\%$ and $\epsilon=1\%$ noise;
see also Table \ref{tab:convrate} for the empirical convergence rates.

\begin{figure}[hbt!]
  \centering
  \begin{tabular}{cc}
    \includegraphics[trim = 1cm 0cm 1.5cm .5cm, clip=true,width=.35\textwidth]{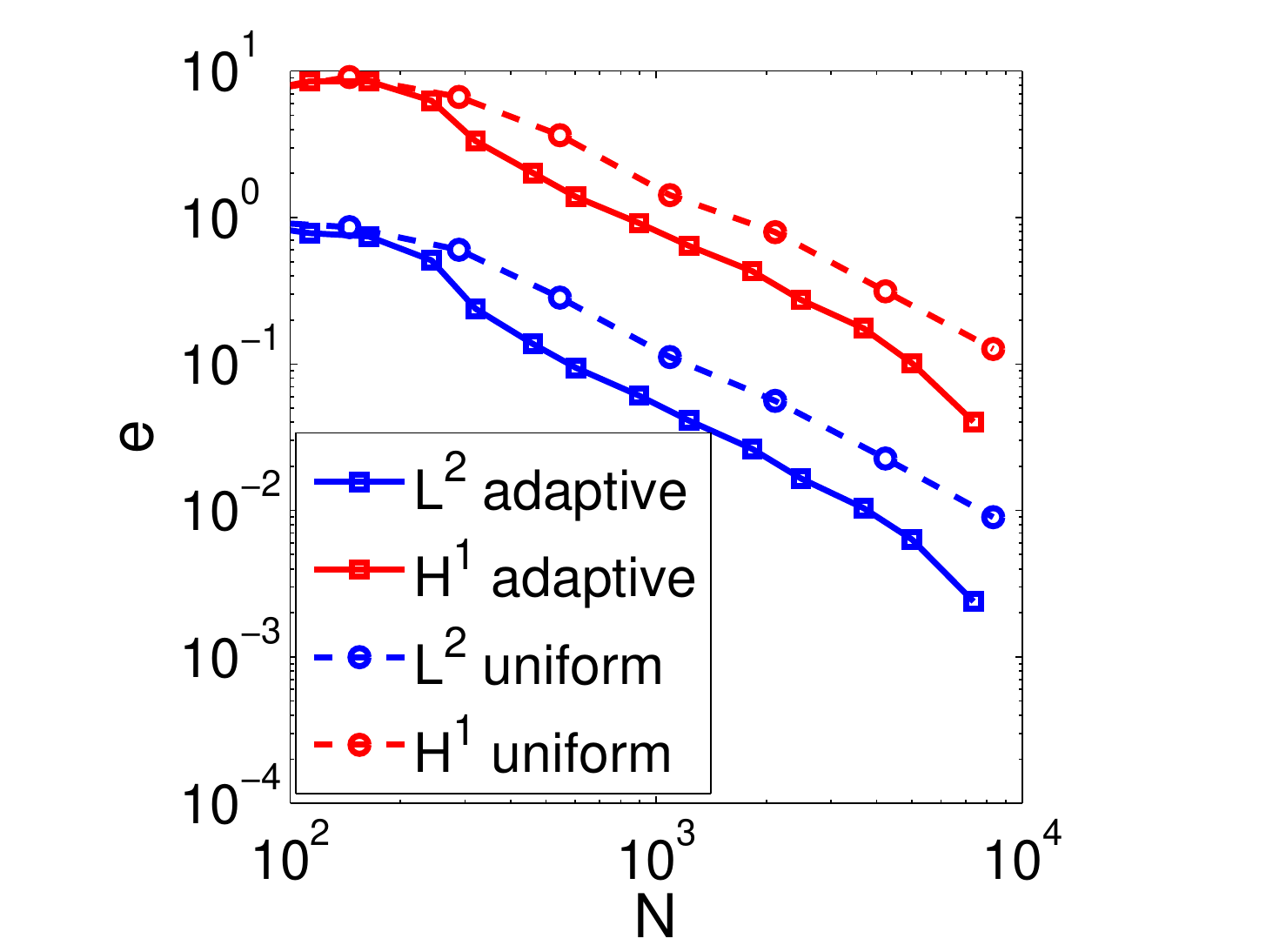}
    & \includegraphics[trim = 1cm 0cm 1.5cm .5cm, clip=true,width=.35\textwidth]{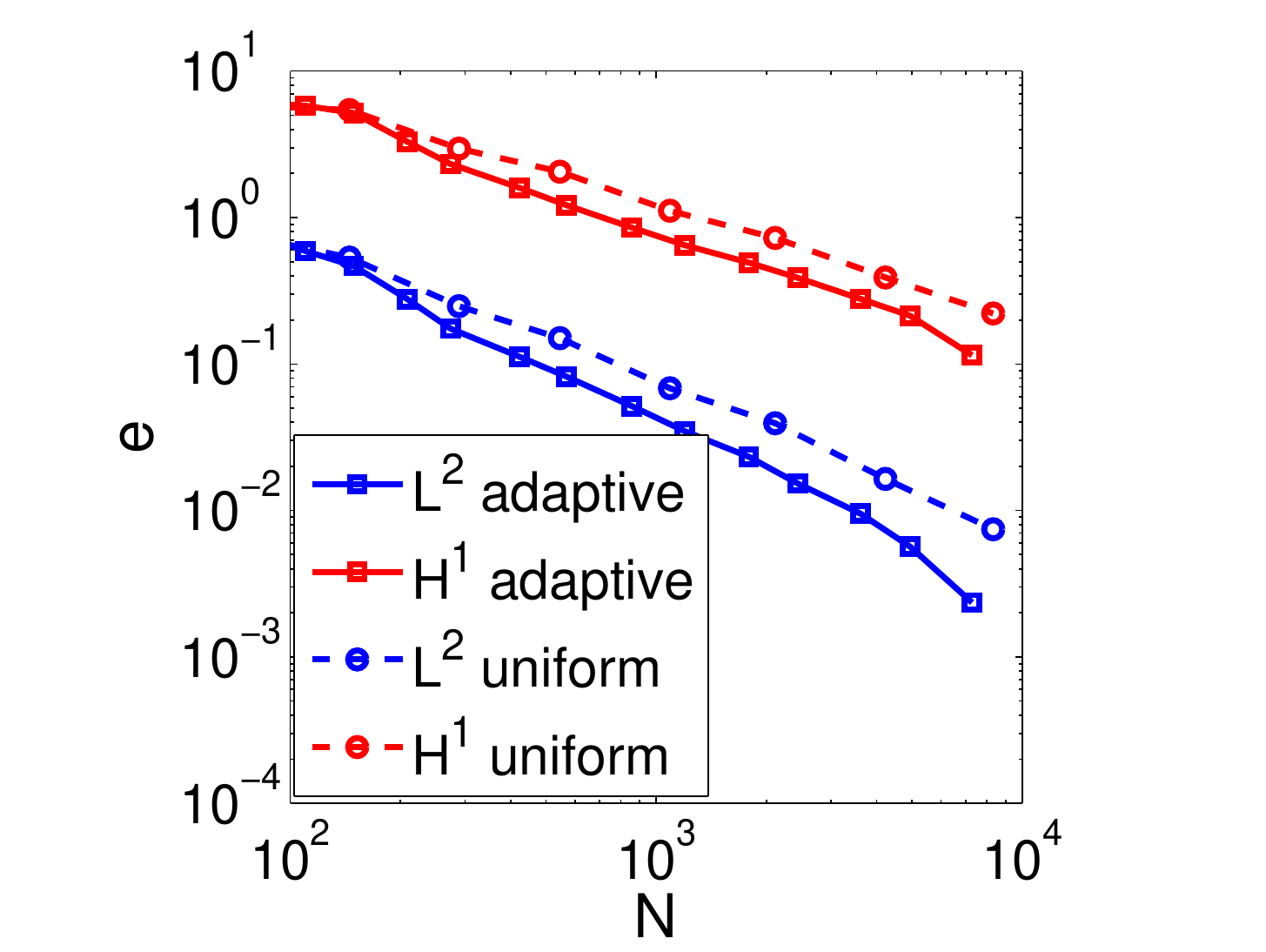}\\
    (a) $\epsilon=1\times10^{-3}$ & (b) $\epsilon=1\times10^{-2}$
  \end{tabular}
  \caption{The $L^2(\Omega)$ and $H^1(\Omega)$ errors versus the degree of freedom $N$ of the mesh, for Example \ref{exam2} at two different noise levels, using the adaptive refinement (solid line) and uniform refinement (dashed line).}\label{fig:exam2-efficiency}
\end{figure}

Last, we consider one example with a discontinuous conductivity field.
\begin{example}\label{exam3}
The true conductivity $\sigma^\dag$ is given by $\sigma^\dag(x) = \sigma_0(x) + (x_1/2+x_2)\chi_{\Omega'}$, where $\chi_{\Omega'}$ is the
characteristic function of the set $\Omega'=(1/4,3/4)\times(0,1/2)$, and the back ground conductivity $\sigma_0(x)=1$.
\end{example}

Since the $H^1(\Omega)$ penalty imposes a global smoothness condition, it is unsuitable for recovering
discontinuous conductivity fields. Hence, in this example we assume that the support $\Omega'$ of the true conductivity
field $\sigma^\dag$ is known, and aim at determining the variation within the support using the $H^1(\Omega')$ semi-norm penalty.
The adaptive algorithm and the convergence proof can be extended directly: the variational inequality
is now defined only on $\Omega'$, and the estimator
$\eta_{\cT,3}^{2}(\sigma_{\cT}^{\ast},u_{\cT}^{\ast},p_{\cT}^{\ast},T)$ is only for elements in $\Omega'$).

The numerical results for the example are presented in Figs. \ref{fig:exam3-recon}, \ref{fig:exam3-recon-iter} and
\ref{fig:exam3-efficiency}. The observations from the preceding two examples remain largely valid. The magnitude
of the conductivity is slightly reduced, but otherwise the profile is reasonable, and visually the recoveries
by the adaptive and the uniform refinements are close to each other, cf. Figs. \ref{fig:exam3-recon}(b) and
\ref{fig:exam3-recon}(c). Even though the conductivity field $\sigma$ is discontinuous, the adaptive algorithm
first mainly resolves the singularity due to the change of boundary conditions, i.e., around the boundary,
cf. Fig. \ref{fig:exam3-recon-iter}(b). As the adaptive iteration proceeds, the algorithm then starts to refine
the region near the boundary $\partial\Omega^\prime$ of the subdomain $\Omega'$: first the part close
to the boundary $\partial\Omega$, and then the part away from $\partial\Omega$, cf. Figs. \ref{fig:exam3-recon-iter}(c)
and \ref{fig:exam3-recon-iter}(d). This is consistent with the empirical observation that the further away from the boundary,
the more challenging it is to be resolved (from the boundary data), i.e., the boundary data allows better resolving
the regions close to the boundary. Hence, the solution singularity
induced by the conductivity discontinuity does not play an important role in the inversion as it was in direct
problems. The gain of computational efficiency is shown in Fig. \ref{fig:exam3-efficiency}: the $L^2(\Omega)$-
and the $H^1(\Omega)$-errors decrease faster with
the increase of degree of freedom for the adaptive algorithm than that for the uniform refinement.

\begin{figure}[hbt!]
  \centering\setlength{\tabcolsep}{0em}
  \begin{tabular}{ccc}
      \includegraphics[trim = .5cm 0cm 1cm 0cm,clip=true,width=0.33\textwidth]{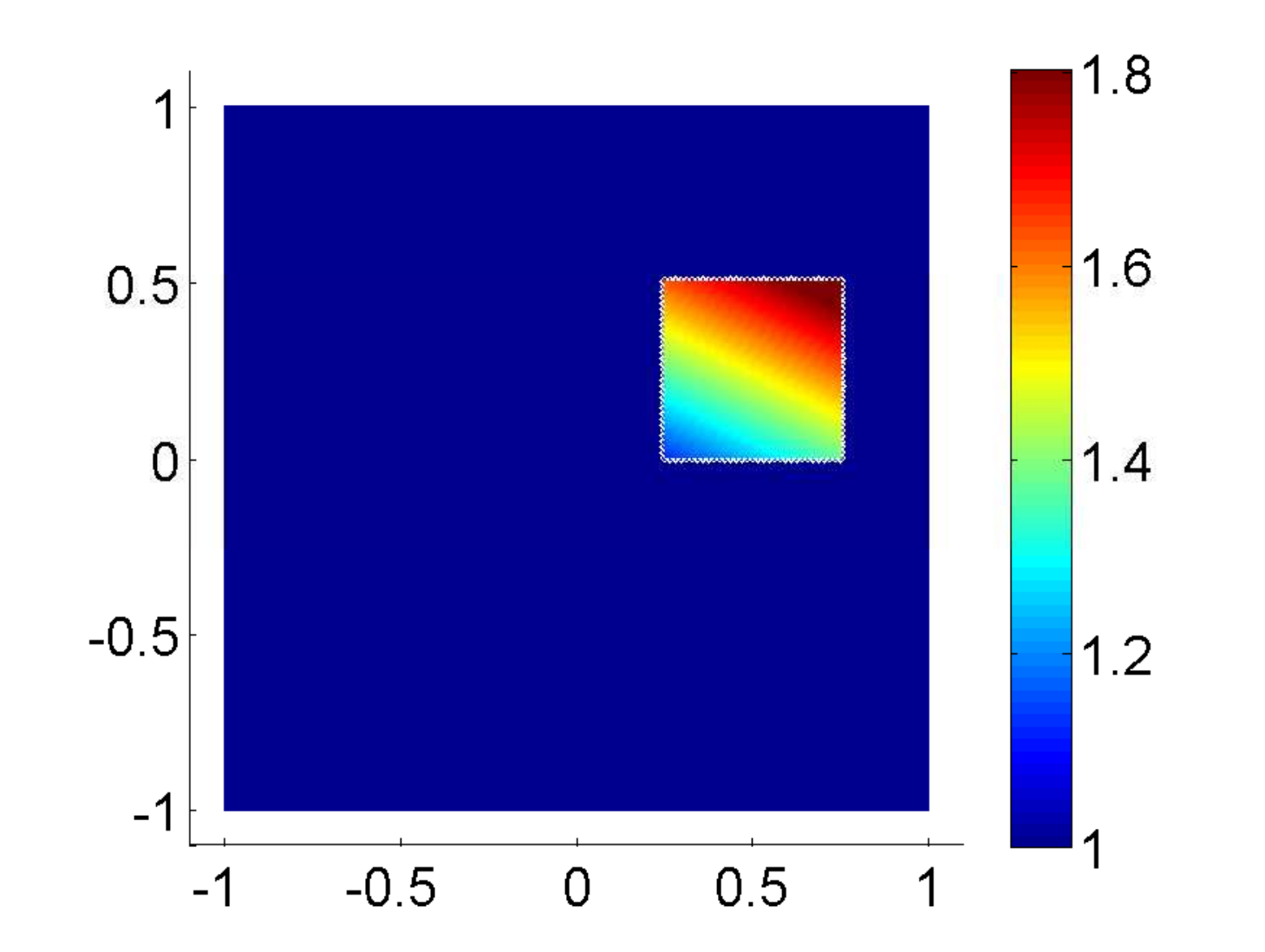}
    & \includegraphics[trim = .5cm 0cm 1cm 0cm, clip=true,width=.33\textwidth]{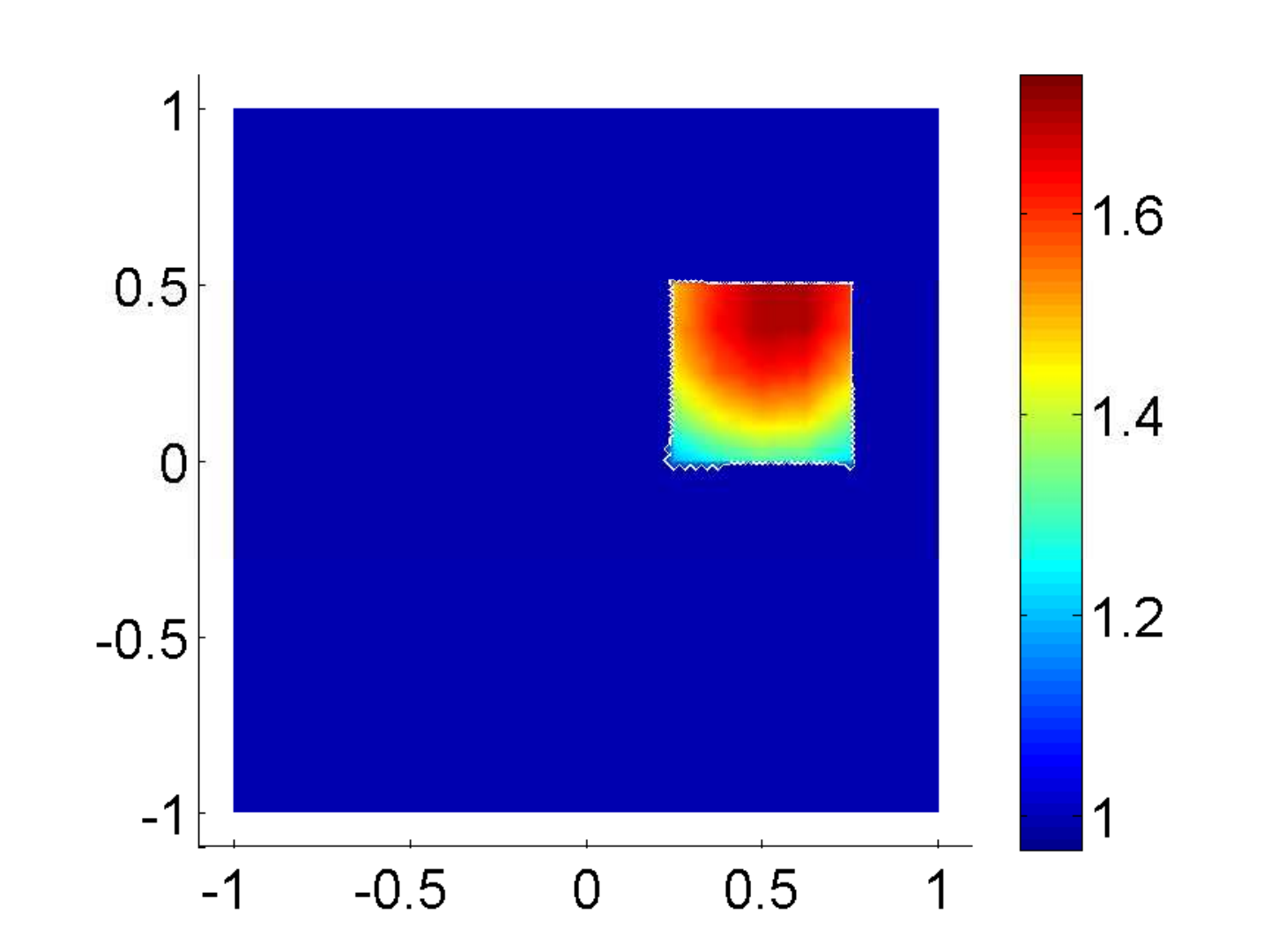}
    & \includegraphics[trim = .5cm 0cm 1cm 0cm, clip=true,width=.33\textwidth]{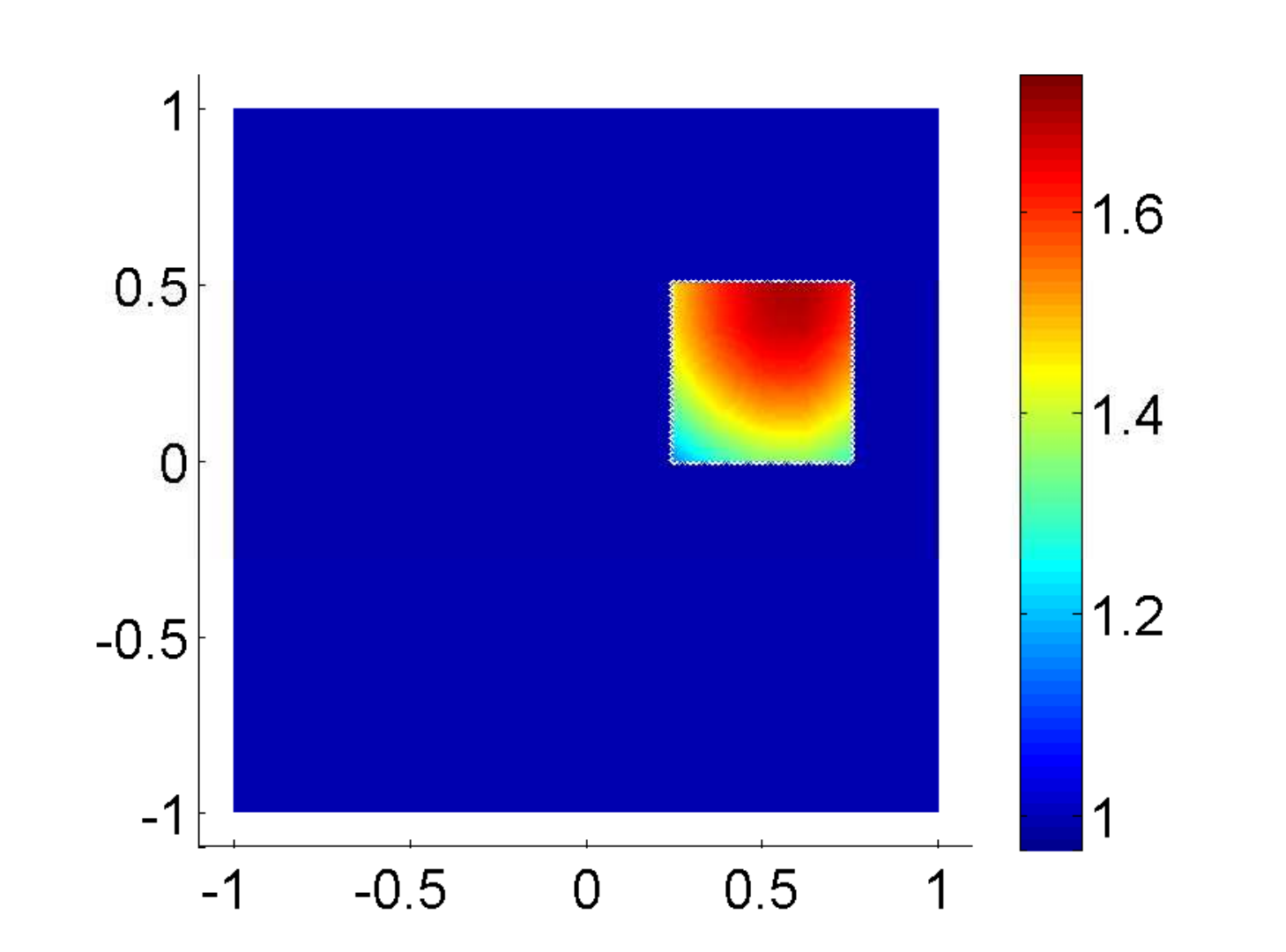}\\
    (a) true conductivity & (b) adaptive refinement & (c) uniform refinement
  \end{tabular}
  \caption{The final reveries by the adaptive and uniform refinements for Example
  \ref{exam3} with $\epsilon=0.1\%$ noise. The degree of freedom is $19608$ and $33025$
  for the adaptive and uniform refinement, respectively. The regularization parameter $\alpha$
  is fixed at $\alpha=3.2\times10^{-3}$.}\label{fig:exam3-recon}
\end{figure}

\begin{figure}[hbt!]
 \centering
 \setlength{\tabcolsep}{1pt}
 \begin{tabular}{ccccc}
 \includegraphics[trim = 0cm 0cm 0cm 0cm, width=.23\textwidth]{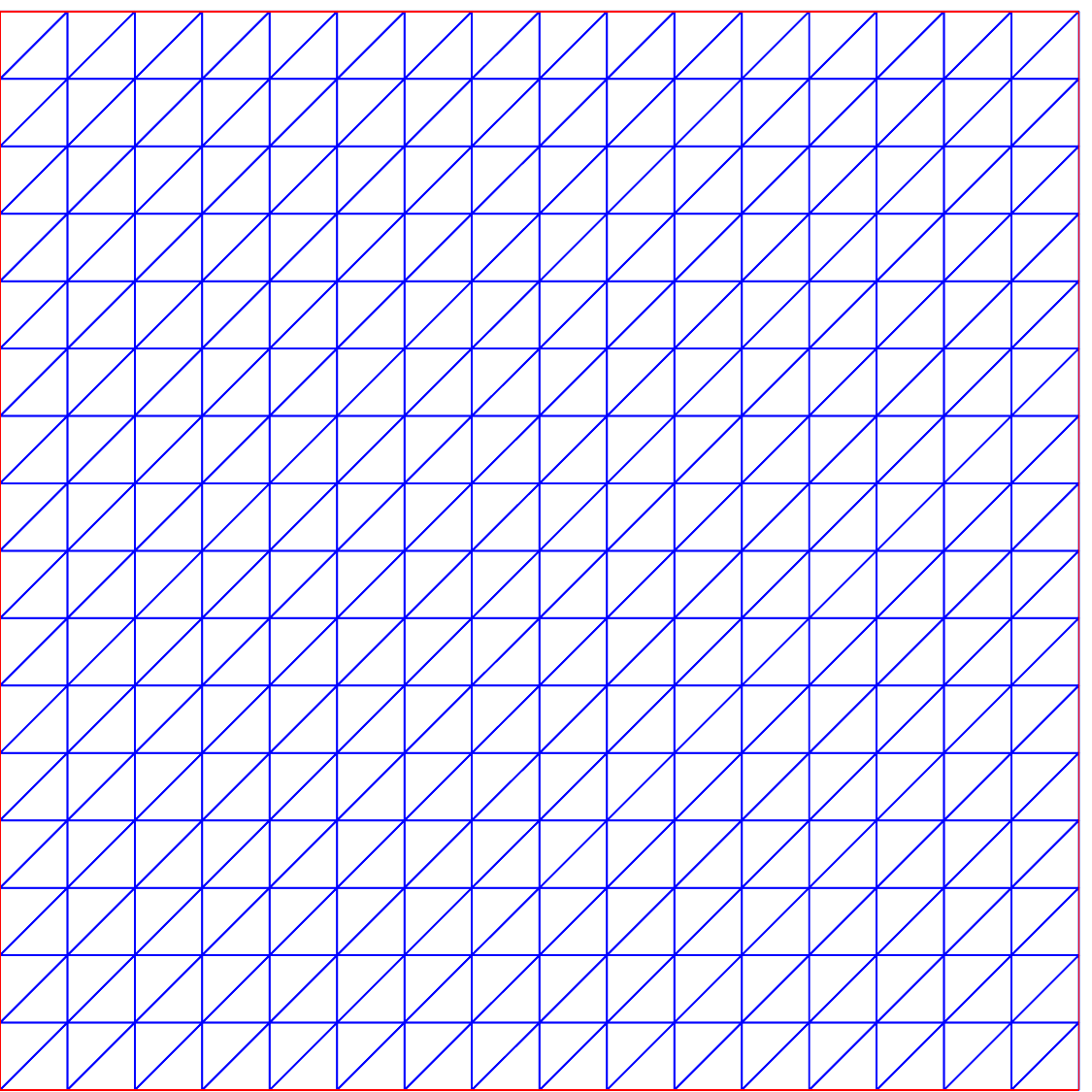}&
 \includegraphics[trim = 0cm 0cm 0cm 0cm, width=.23\textwidth]{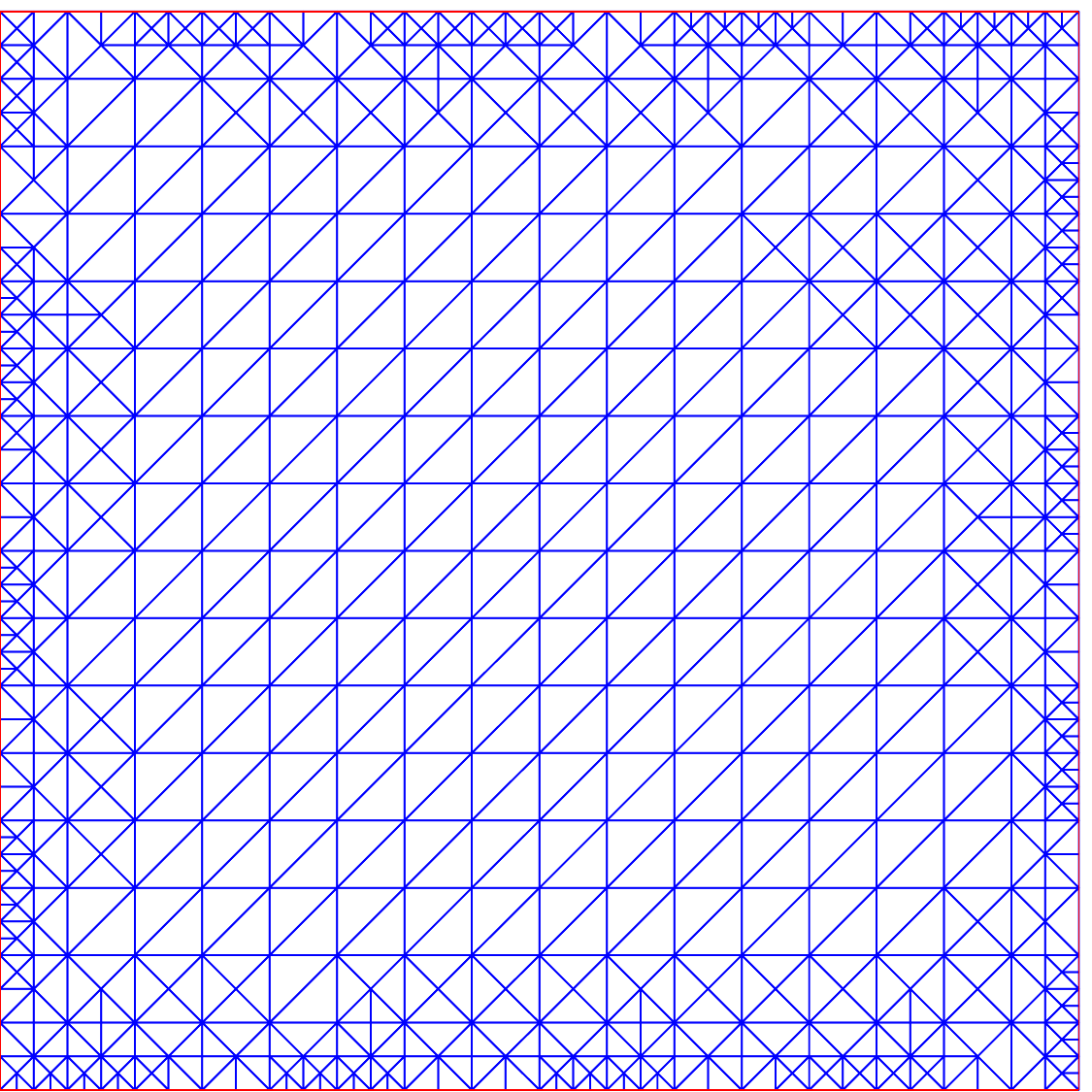}&
 \includegraphics[trim = 0cm 0cm 0cm 0cm, width=.23\textwidth]{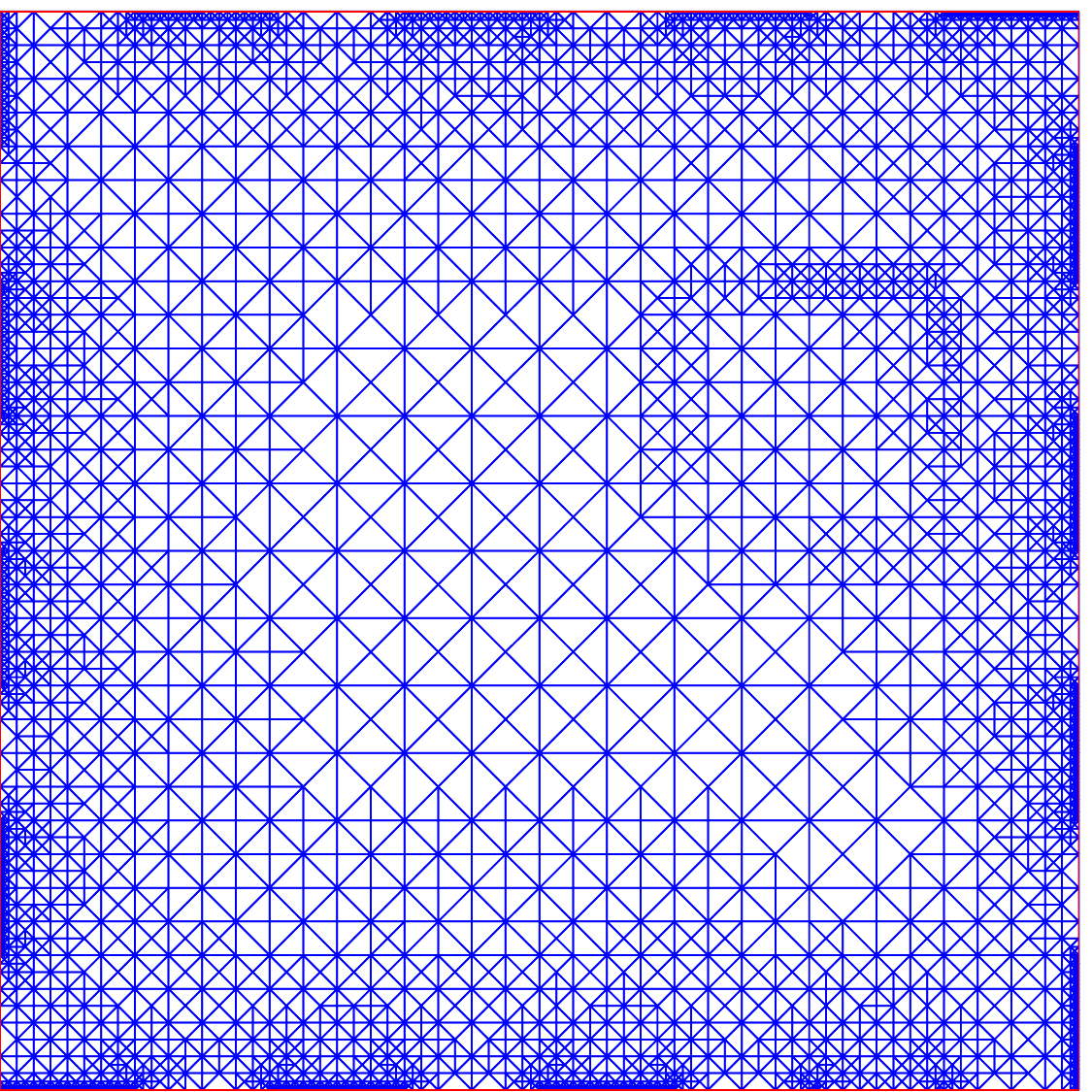}&
 \includegraphics[trim = 0cm 0cm 0cm 0cm, width=.23\textwidth]{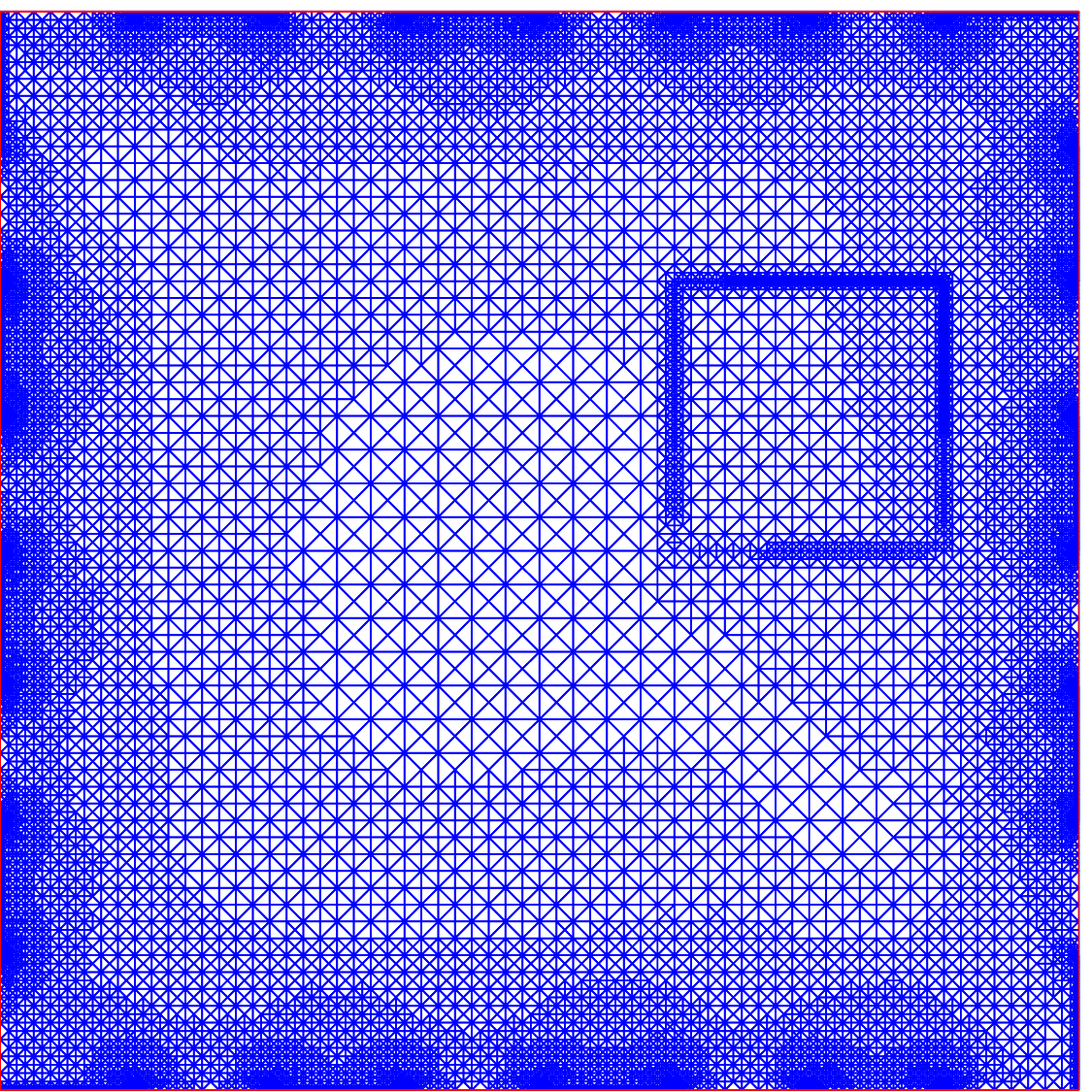}\\
    \includegraphics[trim = 1cm 0.5cm 0.2cm 0cm, width=.24\textwidth]{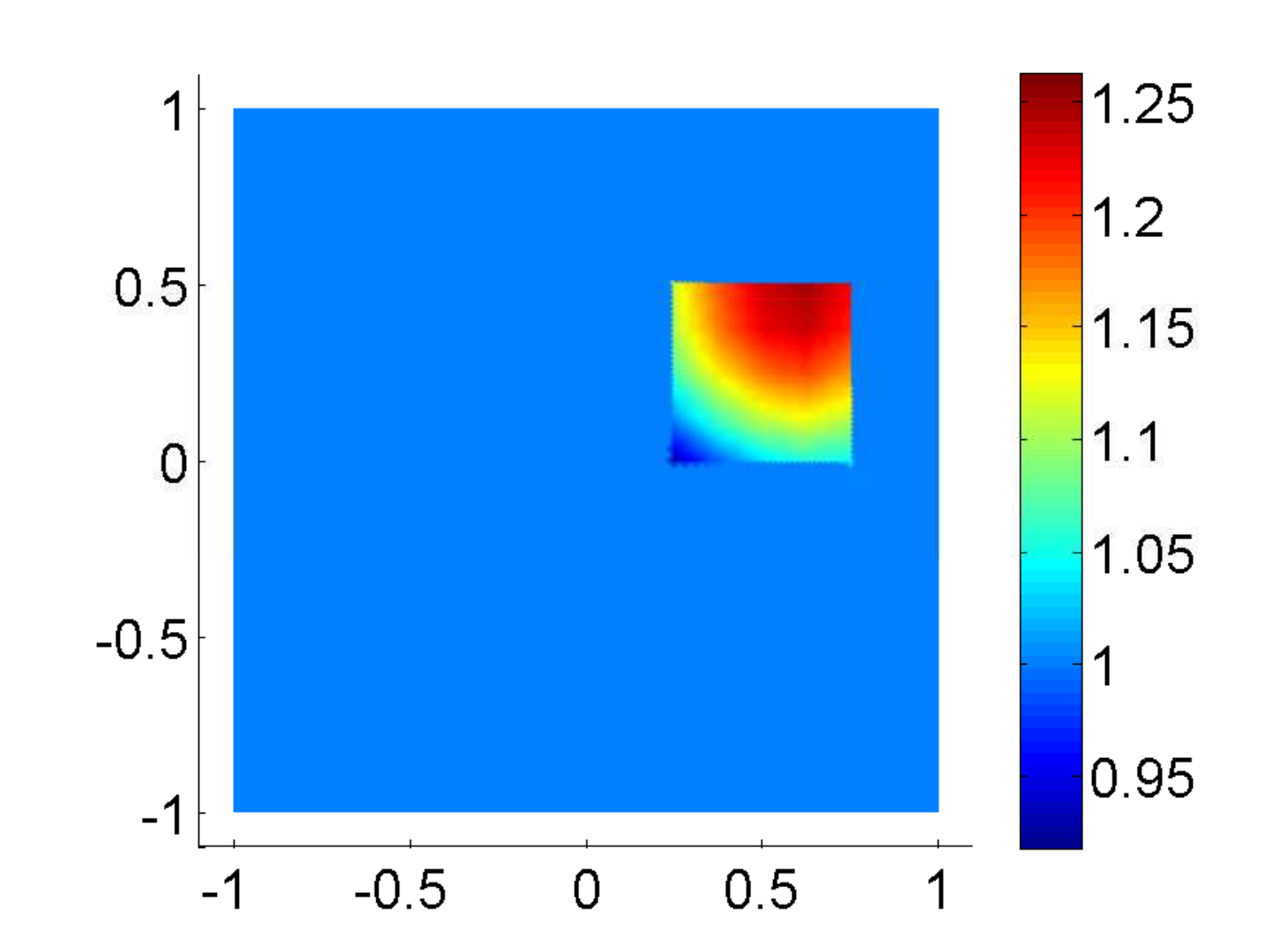} &
    \includegraphics[trim = 1cm 0.5cm 0.2cm 0cm, width=.24\textwidth]{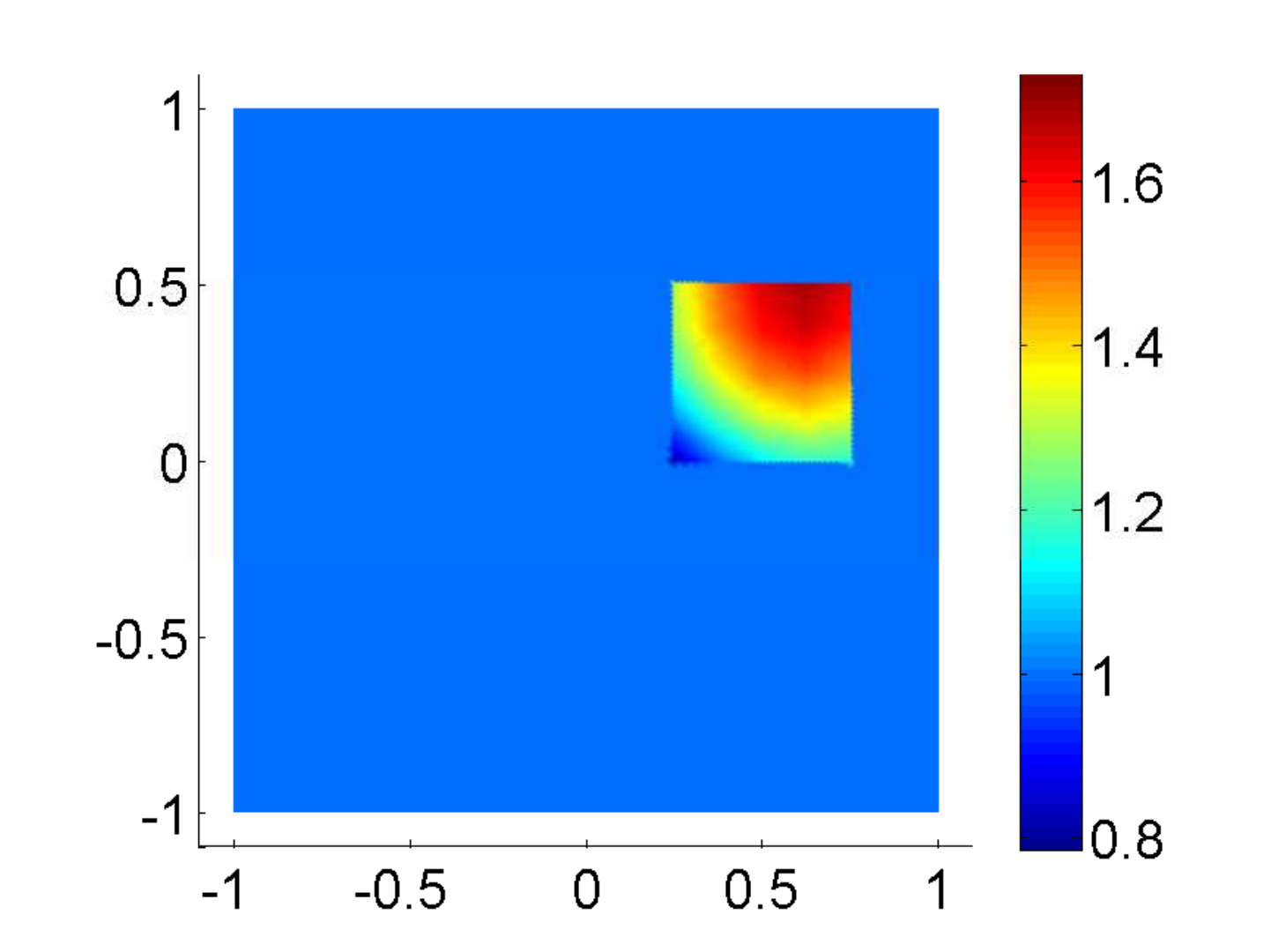} &
    \includegraphics[trim = 1cm 0.5cm 0.2cm 0cm, width=.24\textwidth]{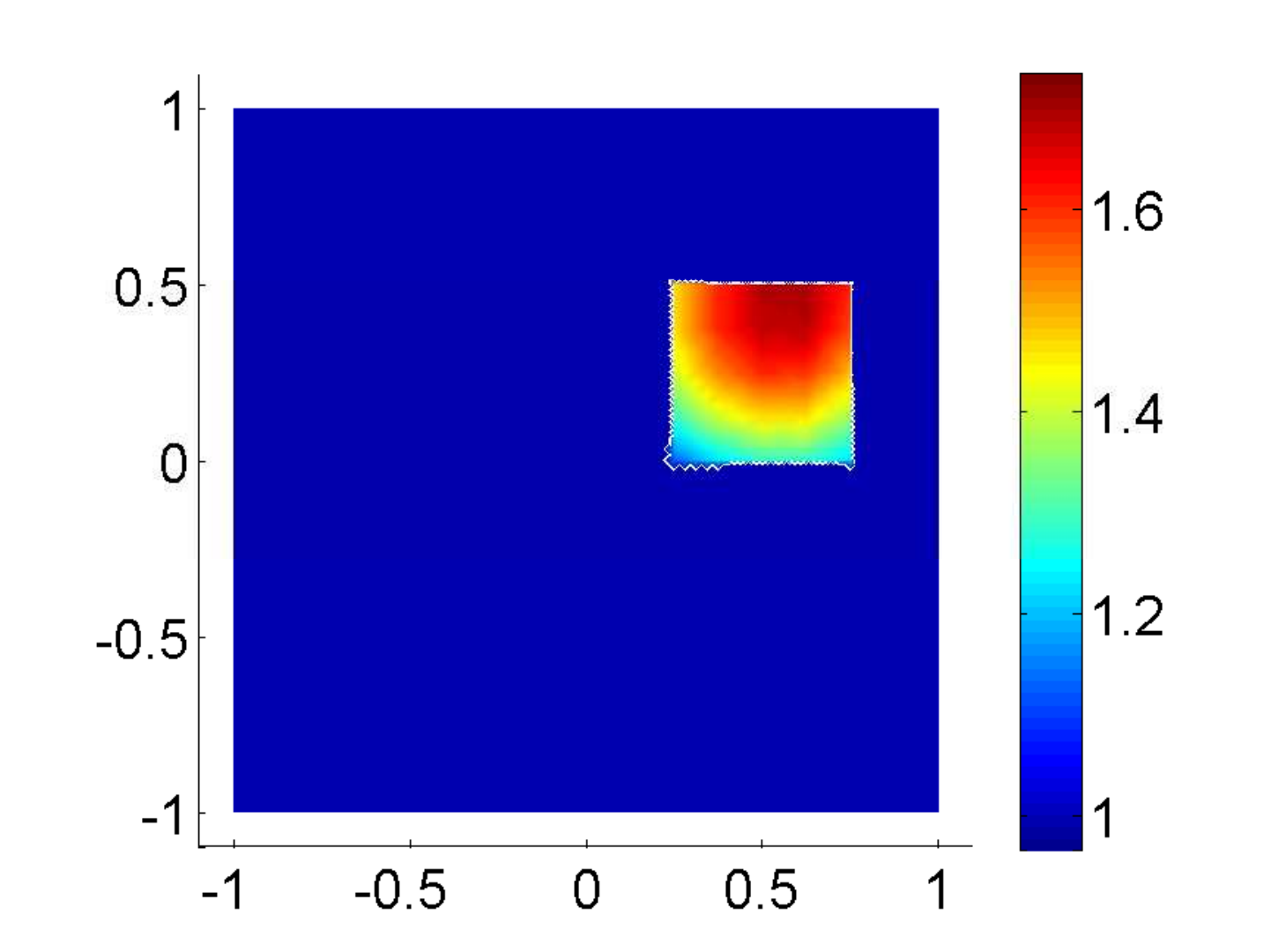} &
    \includegraphics[trim = 1cm 0.5cm 0.2cm 0cm,width=.24\textwidth]{ex4_1e3ad_iter15}\\
    (a) 0th step & (b) 4th step & (c) 9th step & (d) 14th step
 \end{tabular}
 \caption{The recovered conductivity during the adaptive refinement, for Example \ref{exam3} with $\epsilon=0.1\%$ noise.
 The regularization parameter is fixed at $\alpha=3.2\times10^{-3}.$} \label{fig:exam3-recon-iter}
\end{figure}

\begin{figure}[hbt!]
  \centering
  \begin{tabular}{cc}
    \includegraphics[trim = 1cm 0cm 1.5cm 0cm, clip=true,width=.35\textwidth]{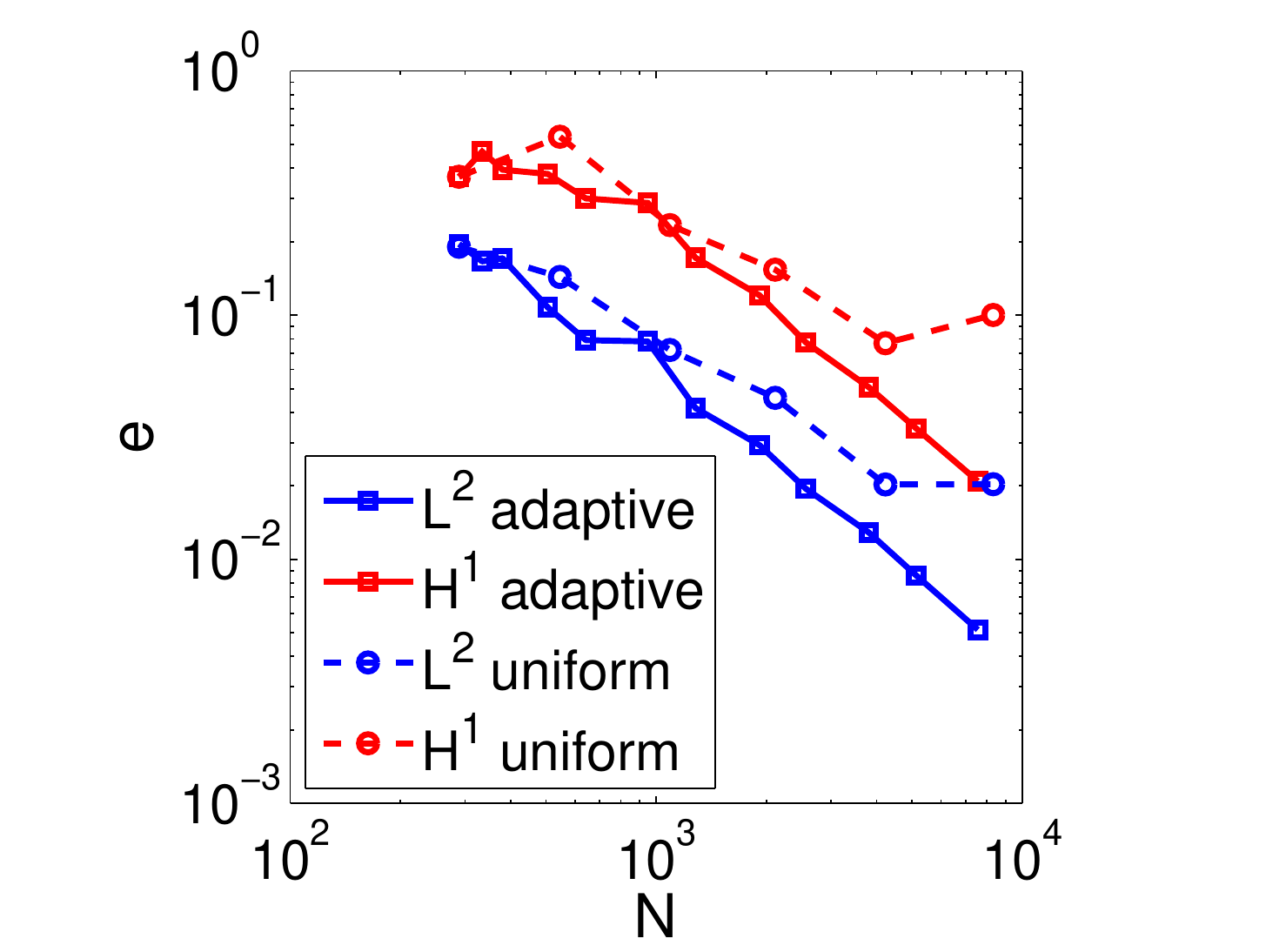}
    & \includegraphics[trim = 1cm 0cm 1.5cm 0cm, clip=true,width=.35\textwidth]{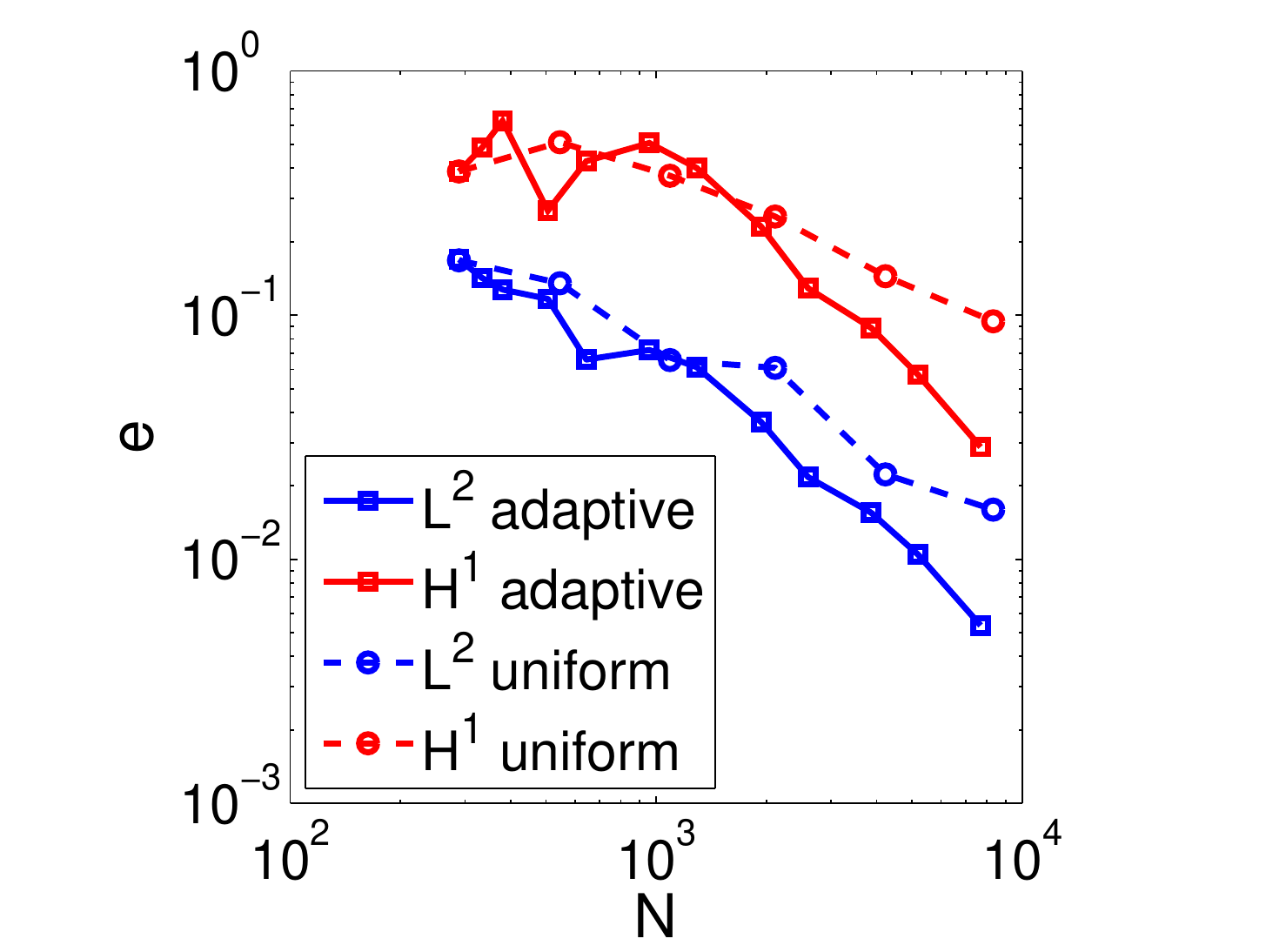}\\
    (a) $\epsilon=1\times10^{-3}$ & (b) $\epsilon=1\times10^{-2}$
  \end{tabular}
  \caption{The $L^2(\Omega^\prime)$ and $H^1(\Omega^\prime)$ errors versus the degree of freedom $N$ of the mesh, for Example \ref{exam3} at two different noise levels, using the adaptive refinement (solid line) and uniform refinement (dashed line).}\label{fig:exam3-efficiency}
\end{figure}

\section{Concluding remarks}
In this work, we have developed a novel adaptive finite element method for the electrical impedance tomography
inverse problem, modeled by the complete
electrode model. It is formulated as an output least-squares problem with a Sobolev smoothness
penalty. The weak solution singularity around the electrode surfaces and low-regularity conductivity motivate
the use of the adaptive refinement techniques. We have derived a residual-type a posteriori error estimator,
which involves the state, adjoint and conductivity estimate, and established the
convergence of the sequence of solutions generated by the adaptive technique that the accumulation point solves
the continuous optimality system. The
efficiency and convergence of the proposed adaptive algorithm is confirmed by a few numerical experiments.

This work represents only a first step towards the rigorous adaptive finite element method for
nonlinear inverse problems associated with PDEs. There are several research
problems deserving further study. First, the proposed
algorithm is only for the smoothness penalty, which is essential in the development and convergence analysis of the algorithm.
It is of much interest to derive and to analyze adaptive algorithms for nonsmooth penalties, e.g., total
variation and sparsity. Second, numerically one
observes that the algorithm can approximate a (local/global) minimizer of the continuous optimization well,
instead of only a solution to the necessary optimality condition. This is
still theoretically to be justified. Third, the reliability and optimality of the adaptive algorithm for
nonlinear inverse problems
are completely open, which seems not fully understood even for linear ones. The optimality
issue in the context of inverse problems should be related to the noise level. The crucial
interplay between the error estimator and noise level is to be elucidated.

\section*{Acknowledgements}

The authors are grateful to the referees for their constructive comments, which have led
to an improved presentation, and to Mr. Chun-Man Yuen for his great help in carrying out the numerical experiments.
The work of B. Jin was supported by UK EPSRC  grant EP/M025160/1, and that
of Y. Xu by National Natural Science Foundation of China (11201307), Ministry of Education of China through
Specialized Research Fund for the Doctoral Program of Higher Education (20123127120001),
E-Institute of Shanghai Universities (E03004) and Innovation Program of Shanghai Municipal Education Commission (13YZ059).
The work of J. Zou was substantially supported by Hong Kong RGC grants
(projects 14306814 and 405513).

\bibliographystyle{abbrv}
\bibliography{eit}

\end{document}